\newtheorem{theorem}{Theorem}
\newtheorem{corollary}[theorem]{Corollary}
\newtheorem{definition}[theorem]{Definition}
\newtheorem{example}[theorem]{Example}
\newtheorem{lemma}[theorem]{Lemma}
\newtheorem{proposition}[theorem]{Proposition}
\newtheorem{remark}[theorem]{Remark}
\newenvironment{proof}[1][Proof]{\noindent\textbf{#1.} }{\ \rule{0.5em}{0.5em}}
\begin{document}
	
\title{Chain recurrence and Selgrade's theorem for affine flows}
\date{}
\author{Fritz Colonius$^{1^*}$ $\,\ $  and  $\,\ $ Alexandre J. Santana$^{2{\dag}}$
\vspace{0,3cm}\\ 
	$^{1}$Institut f\"{u} Mathematik, Universit\"{a}t Augsburg, Universit\"{a}tsstraße 2,\\
	Augsburg, 86159, Germany.\\
	$^{2}$Department of Mathematics, State University of Maringa, \\
	Av. Colombo, 5790,	 Maringa, 87020-900, Brazil.
\vspace{0,5cm}\\
	$^{*}$Corresponding author. E-mail: fritz.colonius@uni-a.de;\\
	$^{\dag}$Contributing author: ajsantana@uem.br;\\
	This author contributed equally to this work} 
\maketitle	
	
\begin{abstract}	
Affine flows on vector bundles with chain transitive base flow are lifted to
		linear flows and the decomposition into exponentially separated subbundles
		provided by Selgrade's theorem is determined. The results are illustrated by
		an application to affine control systems with bounded control range.
\end{abstract}	
	
\noindent\textbf{Key words:}	affine flows, Selgrade's
	theorem, chain transitivity, Poincar\'{e} sphere, affine control systems
\newline\noindent\textbf{MSC Classification:}	37B20, 34H05, 93B05

%	\keywords{affine flows, Selgrade's
%		theorem, chain transitivity, Poincar\'{e} sphere, affine control systems}
%	\pacs[MSC Classification]{37B20, 34H05, 93B05}
	
	\maketitle
	
	\section{Introduction\label{Section1}}

	For linear (skew product) flows on vector bundles, Selgrade's theorem
	describes the decomposition into subbundles obtained from the chain recurrent
	components of the induced flow on the projective bundle. This coincides with
	the finest decomposition into exponentially separated subbundles. It is a
	simple observation that affine flows can be lifted to linear flows on an
	augmented state space and the main purpose of the present paper is to connect
	the resulting Selgrade decomposition to properties of the original affine flow.
	
	The theory of linear flows was developed in the second half of the last
	century. We refer, in particular, to Sacker and Sell \cite{SacS}, Salamon and
	Zehnder \cite{SalZ88}, Bronstein and Kopanskii \cite{BroK94}, Johnson, Palmer
	and Sell \cite{JoPS87}; cf. also Kloeden and Rasmussen \cite{KloedeR} and
	Colonius and Kliemann \cite{ColK00, ColK14}. An affine flow on a vector bundle
	$\pi:\mathcal{V}\rightarrow B$ over a compact metric space $B$ is a continuous
	flow $\Psi$ on $\mathcal{V}$ preserving fibers such that the induced maps on
	the fibers are affine. We will only consider (topologically) trivial vector
	bundles of the form $\mathcal{V}=B\times H$, where $H$ is a Hilbert space and
	suppose that the base flow on $B$ is chain transitive.
	
	Selgrade's theorem for linear flows $\Phi$ (Selgrade \cite{Selg75},
	\cite[Theorem 9.2.5]{ColK14}) states that the induced flow $\mathbb{P}\Phi$ on
	the projective bundle $\mathbb{P}\mathcal{V}$ has finitely many chain
	recurrent components (this coincides with the finest Morse decomposition). The
	chain recurrent components define invariant subbundles which yield the finest
	decomposition of $\mathcal{V}$ into exponentially separated subbundles.
	Generalizations include Patr\~{a}o and San Martin \cite{PatSM07} for semiflows
	on fiber bundles, Alves and San Martin \cite{AlvSM16} for principal bundles,
	and Blumenthal and Latushkin \cite{BluL19} for linear semiflows on separable
	Banach bundles.
	
	The essence of our set-up is to lift an affine flow $\Psi$ to a linear flow
	$\Psi^{1}$. When we apply Selgrade's theorem to the linear flow $\Psi^{1}$,
	the projection to the projective bundle has a geometric interpretation: It is
	a version of the projection to the Poincar\'{e} sphere, which (in the
	autonomous case) is obtained by attaching a copy of $\mathbb{R}^{d}$ to the
	sphere $\mathbb{S}^{d}$ in $\mathbb{R}^{d+1}$ at the north pole and by taking
	the central projection from the origin in $\mathbb{R}^{d+1}$ to the northern
	hemisphere $\mathbb{S}^{d,+}$ of $\mathbb{S}^{d}$. Then the equator of
	$\mathbb{S}^{d}$ represents infinity. This is closely related to the classical
	construction of the Poincar\'{e} sphere from the global theory of ordinary
	differential equations going back to Poincar\'{e} \cite{Poin}; cf., e.g.,
	Perko \cite[Section 3.10]{Perko}.
	
	The main contributions of this paper are the following: Affine flows on vector
	bundles are lifted to linear flows by multiplying the inhomogeneous term by an
	additional state variable, which is constant. This linear flow on the extended
	state space can be projected to a flow on the projective bundle, where the
	equator can be interpreted as representing the original flow at infinity.
	Selgrade's theorem for linear flows provides a decomposition of the extended
	state space. It turns out that there is a unique Selgrade bundle, whose
	projection is not contained in the equator. We call it the central Selgrade
	bundle. The projections of the other Selgrade bundles are contained in the
	equator, hence we call them the Selgrade bundles at infinity. Since the
	projective flow restricted to the equator is conjugate to the flow of the
	projectivized linear part of the original flow the Selgrade bundles at
	infinity are obtained by the Selgrade bundles of the linear part of the
	original flow. The flow on projective space outside of the equator is
	conjugate to the original affine flow. The projection of the central Selgrade
	bundle contains the image of the chain transitive set of the original affine
	flow. Furthermore, the Morse spectra of the various Selgrade bundles can be
	characterized. The special cases of uniformly hyperbolic and split affine
	systems allow sharper results. For affine control flows generated by affine
	control systems chain controllability properties can be characterized.
	
	The contents of this paper are as follows. In Section \ref{Section2} on
	preliminaries we formulate Selgrade's theorem for linear flows on vector
	bundles and the Morse spectrum after recalling the required notions from the
	topological theory of flows on metric spaces. In Section \ref{Section3} affine
	flows are defined and lifted to linear flows to which Selgrade's theorem is
	applied. Theorem \ref{Theorem_affine1} shows that there is a unique central
	Selgrade bundle and the other Selgrade bundles are \textquotedblleft at
	infinity\textquotedblright. Section \ref{Section4} deduces a formula for the
	central Selgrade bundle of split affine flows, where the homogeneous and the
	inhomogeneous part can be separated, and Section \ref{Section5} describes the
	uniformly hyperbolic case. In Section \ref{Section6} first some notions from
	control theory are introduced, in particular, the correspondence between
	maximal invariant chain transitive sets of the control flow and chain control
	sets is recalled. Then it is proved that chain control sets are unique for
	split affine control systems, the previous results are applied to the affine
	control flows generated by affine control systems, and several examples are presented.
	
	\section{Preliminaries\label{Section2}}
	
	This section collects notation and results for continuous flows on metric
	spaces and recalls Selgrade's theorem for linear flows as well as the Morse spectrum.
	
	\subsection{Flows on metric spaces\label{Subsection2.1}}
	
	For the following concepts for flows on metric spaces cf. Alongi and Nelson
	\cite{AlonN07}, Robinson \cite{Robin98}, and Colonius and Kliemann
	\cite{ColK00, ColK14}.
	
	A flow on a metric space $X$ with metric $d$ is given by a continuous function
	$\Phi:\mathbb{R}\times X\rightarrow X$ satisfying $\Phi(0,x)=x$ and
	$\Phi(t+s,x)=\Phi(t,\Phi(s,x))$ for all $t,s\in\mathbb{R}$ and $x\in X$. Where
	convenient, we also write $\Phi_{t}(x)=\Phi(t,x)$. A conjugacy of flows
	$\Phi^{\prime}$ on $X^{\prime}$ and $\Phi^{\prime\prime}$ on $X^{\prime\prime
	}$ is a homeomorphism $h:X^{\prime}\rightarrow X^{\prime\prime}$ with
	$h(\Phi_{t}^{\prime}(x))=\Phi_{t}^{\prime\prime}(h(x))$ for all $(t,x)\in
	\mathbb{R}\times X^{\prime}$.
	
	For $\varepsilon,\,T>0$ an $(\varepsilon,T)$-chain $\zeta$ for $\Phi$ from $x$
	to $y$ is given by $n\in\mathbb{N}\mathbf{,}$ $T_{0},\ldots,T_{n-1}\geq T$,
	and $x_{0}=x,\ldots,x_{n-1},x_{n}=y\in X$ with $d(\Phi(T_{i},x_{i}%
	),x_{i+1})<\varepsilon$ for $i=0,\ldots,n-1$. For $x\in X$ the $\omega$-limit
	and the $\alpha$-limit set are
	\[
	\omega(x)=\{y\in X|\exists t_{k}\rightarrow\infty:\Phi(t_{k},x)\rightarrow
	y\}\mbox{ and }
	\]%
	\[
	\alpha(x)=\{y\in X\left\vert \exists t_{k}\rightarrow-\infty:\Phi
	(t_{k},x)\rightarrow y\right.  \},
	\]
	respectively. The (forward) chain limit set is
	\[
	\Omega(x)=\{y\in X\left\vert \forall\varepsilon,T>0\,\exists(\varepsilon
	,T)\mbox{-chain from }x\mbox{ to }y\right.  \}.
	\]
	A point $x\in X$ is called chain recurrent if $x\in\Omega(x)$, and a set
	$Y\subset X$ is called chain transitive if $y\in\Omega(x)$ for all $x,y\in Y$.
	Observe that any subset of a chain transitive set is chain transitive, and
	(cf. \cite[Proposition 2.7.10]{AlonN07}) a set is chain transitive if and only
	if its closure is chain transitive. A chain recurrent component is a maximal
	chain transitive set. On a compact metric space these are the connected
	components of the chain recurrent set and the flow restricted to a chain
	recurrent component is chain transitive. If $X$ is chain transitive for a flow
	on $X$, then also the flow is called chain transitive. For a continuous map
	$f:X\rightarrow X$ and $x,y\in X$ an $\varepsilon$-chain from $x$ to $y$ is
	given by $x_{0}=x,x_{1},\ldots,x_{n-1},x_{n}=y$ in $X$ with $d(f(x_{i}%
	),x_{i+1})<\varepsilon$ for all $i$.
	
	The next result is proved in \cite[Theorem 2.7.18]{AlonN07}.
	
	\begin{theorem}
		\label{Theorem_chains}The following properties are equivalent for a flow
		$\Phi$ on a compact metric space $X$ and points $x,y\in X$.
		
		(i) The points $x$ and $y$ satisfy $y\in\Omega(x)$ and $x\in\Omega(y)$.
		
		(ii) For the map $\Phi_{1}:X\rightarrow X$ and every $\varepsilon>0$ there
		exists an $\varepsilon$-chain from $x$ to $y$ and an $\varepsilon$-chain from
		$y$ to $x$.
	\end{theorem}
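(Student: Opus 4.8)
The plan is to prove the two implications separately. Both the hypothesis in (i) and the hypothesis in (ii) are symmetric in $x$ and $y$, so in each direction it suffices to construct the required chain from $x$ to $y$ and then repeat the argument verbatim with the roles of $x$ and $y$ exchanged. Throughout I would fix a modulus of uniform continuity for $\Phi$ on compact time intervals: since $[-2,2]\times X$ is compact, for every $\gamma>0$ there is $\eta>0$ with $d(\Phi(s,a),\Phi(s,b))<\gamma$ whenever $d(a,b)<\eta$ and $|s|\le 2$, and moreover $d(\Phi(s,a),a)<\gamma$ once $|s|$ is small, uniformly in $a\in X$. This uniformity replaces the lack of a global speed bound.

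For (ii)$\Rightarrow$(i), fix $\varepsilon,T>0$ and set $N=\lceil T\rceil$. Starting from a $\delta$-chain $x=z_0,\dots,z_m=y$ for $\Phi_1$, I would first lengthen it, if necessary, by inserting $\Phi_1$-chain loops at $x$ or $y$; such loops exist because composing the chains from $x$ to $y$ and from $y$ to $x$ makes both $x$ and $y$ chain recurrent for $\Phi_1$. Once $m\ge N$, I group the unit steps into consecutive blocks of lengths $k_i\in[N,2N)$, turning the chain into a sequence of flow jumps of integer times $k_i\ge T$. The error contributed by one block is controlled by iterating the modulus of continuity of $\Phi_1$ at most $2N$ times; since $N$ is fixed, choosing $\delta$ small enough makes every block land within $\varepsilon$ of the next vertex, producing an $(\varepsilon,T)$-chain for $\Phi$. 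Hence $y\in\Omega(x)$, and symmetrically $x\in\Omega(y)$.

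For (i)$\Rightarrow$(ii) -- the substantial direction -- fix $\delta>0$. The decisive first step is to produce a flow $(\varepsilon',T)$-chain $x=x_0,\dots,x_n=y$, with $T\ge 2$ and $\varepsilon'$ small, whose total elapsed time $\Sigma=\sum_i T_i$ is \emph{exactly an integer}. I would take any such chain, pad it with flow loops at $y$ so that it has many jumps, and then shift finitely many jump times $T_i$ by small amounts: shifting one jump time moves only its own landing point, by a distance that tends to $0$ uniformly in the base point, so each shift costs at most a prescribed fraction of the tolerance at its own junction and does not accumulate along the chain. With enough jumps available the admissible total shift sweeps out an interval of length at least one, so $\Sigma$ can be driven to an exact integer while the chain remains an $(\varepsilon',T)$-chain (all shifted times staying $\ge 1$).

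Given such a chain I would discretize it with per-junction resetting. On segment $i$ I follow the exact trajectory $x_i,\Phi_1(x_i),\dots,\Phi_1^{k_i}(x_i)$ for $k_i\in\{\lfloor T_i\rfloor,\lceil T_i\rceil\}$ steps and then reset to the nominal vertex $\Phi(\psi_{i+1},x_{i+1})$, where the accumulated phase evolves by $\psi_{i+1}=\psi_i+(k_i-T_i)$ from $\psi_0=0$. Choosing $k_i$ greedily (round down when $\psi_i\ge 0$, up otherwise) keeps $\psi_i\in(-1,1)$, and the junction jump equals $d(\Phi(\psi_{i+1},\Phi(T_i,x_i)),\Phi(\psi_{i+1},x_{i+1}))$, which falls below $\delta$ once $\varepsilon'$ is small, by uniform continuity over the bounded time $\psi_{i+1}$; since the steps inside each segment are exact and we reset at every junction, no error accumulates. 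Because $\Sigma$ is an integer, $\sum_i(T_i-\lfloor T_i\rfloor)\in\mathbb{Z}$, which forces the greedy phase back to the only integer in $(-1,1)$, namely $\psi_n=0$, so the chain lands at $\Phi(0,y)=y$. The symmetric argument yields a $\delta$-chain from $y$ to $x$. The main obstacle is precisely this fractional-time bookkeeping: a single flow jump of non-integer duration cannot be reproduced by iterates of $\Phi_1$, and the crux is the normalization making $\Sigma$ integral, together with the greedy rounding that prevents phase drift -- which is exactly where the symmetric recurrence hypothesis, used both to pad with loops and to adjust the total time, becomes essential.
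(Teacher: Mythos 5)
Your proof is correct, but note that the paper does not prove Theorem \ref{Theorem_chains} at all: it is quoted from Alongi and Nelson \cite[Theorem 2.7.18]{AlonN07}, so your argument is a self-contained substitute rather than a variant of an in-paper proof. The direction (ii)$\Rightarrow$(i) (padding with loops, blocking unit steps into integer jumps of length in $[N,2N)$, finitely many applications of the modulus of continuity of $\Phi_1$) is the standard argument. In the substantial direction (i)$\Rightarrow$(ii) you correctly isolate the real difficulty -- a flow jump of fractional duration cannot be reproduced by iterates of $\Phi_1$, and naive rounding lets the phase drift so the discrete chain ends at $\Phi(\psi,y)$ rather than at $y$ -- and your two devices resolve it soundly: padding with loops at $y$ (which exist by concatenating the two hypothesized chains, so both halves of (i) are genuinely used) supplies enough jumps that shifting each $T_i$ by at most $s^{\ast}$, where $\sup_{a\in X}d(\Phi(s,a),a)<\gamma$ for $0\le s\le s^{\ast}$ by compactness, drives $\Sigma$ to an integer at the cost of one extra $\gamma$ per junction; and the greedy invariant $\psi_i\in(-1,1)$ with per-junction resets keeps all errors local, while integrality of $\Sigma$ forces $\psi_n=0$. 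This is essentially the mechanism behind the cited textbook proof. One cosmetic slip: segment $i$ must start at the reset point $\Phi(\psi_i,x_i)$, not at $x_i$ as written; your junction formula $d(\Phi(\psi_{i+1},\Phi(T_i,x_i)),\Phi(\psi_{i+1},x_{i+1}))$ is the one consistent with the corrected reading, and taking $T\ge 2$ guarantees $k_i\ge 2$, so each segment contributes genuine $\Phi_1$-steps.
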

	
	It immediately follows that the product of two chain transitive flows is chain transitive.
	
	A related concept are Morse decompositions introduced next. Note first that a
	compact subset $K\subset X$ is called isolated invariant for $\Phi$ if the
	following holds: $\Phi_{t}(x)\in K$ for all $x\in K$ and all $t\in\mathbb{R}$
	and there exists a set $N$ with $K\subset\mathrm{int}\,N$, such that $\Phi
	_{t}(x)\in N$ for all $t\in\mathbb{R}$ implies $x\in K$.
	
	\begin{definition}
		\label{defMorsedecomp}A Morse decomposition of a flow $\Phi$ on a compact
		metric space $X$ is a finite collection $\left\{  \mathcal{M}_{i}\left\vert
		i=1,\ldots,\ell\right.  \right\}  $ of nonvoid, pairwise disjoint, and compact
		isolated invariant sets such that
		
		(i) for all $x\in X$ the limit sets satisfy $\omega(x),\,\alpha(x)\subset
		\bigcup_{i=1}^{\ell}\mathcal{M}_{i}$, and
		
		(ii) suppose that there are $\mathcal{M}_{j_{0}},\mathcal{M}_{j_{1}}%
		,\ldots,\mathcal{M}_{j_{n}}$ and $x_{1},\ldots,x_{n}\in X\setminus
		\bigcup_{i=1}^{\ell}\mathcal{M}_{i}$ with $\alpha(x_{i})\subset\mathcal{M}%
		_{j_{i-1}}$ and $\omega(x_{i})\subset\mathcal{M}_{j_{i}}$ for $i=1,\ldots,n$;
		then $\mathcal{M}_{j_{0}}\neq\mathcal{M}_{j_{n}}$.
		
		The elements of a Morse decomposition are called Morse sets. An order is
		defined by the relation $\mathcal{M}_{i}\preceq\mathcal{M}_{j}$ if there are
		indices $j_{0},\ldots,j_{n}$ with $\mathcal{M}_{i}=\mathcal{M}_{j_{0}
		},\mathcal{M}_{j}=\mathcal{M}_{j_{n}}$ and points $x_{j_{i}}\in X$ with
		\[
		\alpha(x_{j_{i}})\subset\mathcal{M}_{j_{i}-1}\mbox{ and }\omega(x_{j_{i}
		})\subset\mathcal{M}_{j_{i}}\mbox{ for }i=1,\ldots,n.
		\]
		
	\end{definition}
	
	We enumerate the Morse sets in such a way that $\mathcal{M}_{i}\,\preceq
	\mathcal{M}_{j}$ implies $i\leq j$. Thus Morse decompositions describe the
	flow as it goes from a lesser (with respect to the order $\preceq$) Morse set
	to a greater Morse set for trajectories that do not start in one of the Morse
	sets. A Morse decomposition $\left\{  \mathcal{M}_{1},\ldots,\mathcal{M}%
	_{\ell}\right\}  $ is called \textit{finer} than a Morse decomposition
	$\left\{  \mathcal{M}_{1}^{\prime},\ldots,\mathcal{M}_{\ell^{\prime}}^{\prime
	}\right\}  $, if for all $j\in\left\{  1,\ldots,\ell^{\prime}\right\}  $ there
	is $i\in\left\{  1,\ldots,\ell\right\}  $ with $\mathcal{M}_{i}\subset
	\mathcal{M}_{j}^{\prime}$.
	
	The following theorem relates chain recurrent components and Morse
	decompositions; cf. \cite[Theorem 8.3.3]{ColK14}.
	
	\begin{theorem}
		For a flow on a compact metric space there exists a finest Morse decomposition
		if and only if the chain recurrent set has only finitely many connected
		components. Then the Morse sets coincide with the chain recurrent components.
	\end{theorem}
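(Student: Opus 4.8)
The plan is to prove both implications simultaneously by showing that, whenever the chain recurrent set $\mathcal{R}$ has finitely many connected components, these components are precisely the Morse sets of the finest Morse decomposition. Suppose first that $\mathcal{R}$ has finitely many connected components $\mathcal{C}_{1},\dots,\mathcal{C}_{n}$. I would begin by verifying that $\{\mathcal{C}_{1},\dots,\mathcal{C}_{n}\}$ is a Morse decomposition. The set $\mathcal{R}$ is closed, hence compact, and invariant; with only finitely many components, each $\mathcal{C}_{i}$ is open and closed in $\mathcal{R}$, hence compact, and is invariant because $\Phi_{t}(\mathcal{C}_{i})$ is connected, lies in $\mathcal{R}$, and varies continuously in $t$ with $\Phi_{0}=\mathrm{id}$. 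Finiteness provides pairwise disjoint neighborhoods of the $\mathcal{C}_{i}$, from which the isolated-invariance property follows, since any trajectory remaining near $\mathcal{C}_{i}$ has its limit sets in $\mathcal{R}$ and, by connectedness, in $\mathcal{C}_{i}$. Condition (i) of Definition~\ref{defMorsedecomp} holds because $\omega(x),\alpha(x)\subseteq\mathcal{R}$ for every $x$. For the no-cycle condition (ii) I would argue by contradiction: a cycle $\mathcal{C}_{j_{0}}\to\cdots\to\mathcal{C}_{j_{n}}=\mathcal{C}_{j_{0}}$ of connecting orbits would render all participating components mutually reachable by $(\varepsilon,T)$-chains, so by Theorem~\ref{Theorem_chains} they would lie in a single chain transitive set, contradicting that distinct components are maximal chain transitive sets.

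Next I would show that $\{\mathcal{C}_{i}\}$ is finer than every Morse decomposition $\{\mathcal{M}_{1}^{\prime},\dots,\mathcal{M}_{n^{\prime}}^{\prime}\}$, which at once identifies it as the finest one and shows that the Morse sets coincide with the chain recurrent components. The key sublemma is that any chain transitive set is contained in a single Morse set. For this I would first establish $\mathcal{R}\subseteq\bigcup_{j}\mathcal{M}_{j}^{\prime}$: for $x\notin\bigcup_{j}\mathcal{M}_{j}^{\prime}$, condition (i) gives $\alpha(x)\subseteq\mathcal{M}_{j_{0}}^{\prime}$ and $\omega(x)\subseteq\mathcal{M}_{j_{1}}^{\prime}$, and condition (ii) with $n=1$ forces $\mathcal{M}_{j_{0}}^{\prime}\neq\mathcal{M}_{j_{1}}^{\prime}$; if such an $x$ were chain recurrent, an $(\varepsilon,T)$-chain from $x$ back to $x$ combined with this genuine orbit connection would close a cycle among the $\mathcal{M}_{j}^{\prime}$, contradicting (ii). Hence each $\mathcal{C}_{i}\subseteq\mathcal{R}\subseteq\bigcup_{j}\mathcal{M}_{j}^{\prime}$, and since $\mathcal{C}_{i}$ is connected while the $\mathcal{M}_{j}^{\prime}$ are pairwise disjoint and compact, $\mathcal{C}_{i}$ lies in exactly one $\mathcal{M}_{j(i)}^{\prime}$. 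This yields the desired refinement.

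For the converse, suppose a finest Morse decomposition $\{\mathcal{M}_{1},\dots,\mathcal{M}_{n}\}$ exists; it is finite by definition, so it suffices to prove $\mathcal{R}=\bigcup_{i}\mathcal{M}_{i}$ with each $\mathcal{M}_{i}$ a chain recurrent component. The inclusion $\mathcal{R}\subseteq\bigcup_{i}\mathcal{M}_{i}$ is the sublemma above. The reverse inclusion requires each $\mathcal{M}_{i}$ to be chain transitive, and this is where I expect the main obstacle to lie. I would argue by contradiction: if some compact invariant $\mathcal{M}_{i}$ were not chain transitive, then the flow restricted to $\mathcal{M}_{i}$ admits a nonempty proper attractor, hence an attractor–repeller pair, which is a Morse decomposition of $\mathcal{M}_{i}$ into two nonempty sets; replacing $\mathcal{M}_{i}$ by these two sets produces a strictly finer Morse decomposition of $X$, contradicting finest. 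Producing this nontrivial attractor from failure of chain transitivity is the genuinely hard, Conley-theoretic step. Once each $\mathcal{M}_{i}$ is chain transitive it consists of chain recurrent points, giving $\bigcup_{i}\mathcal{M}_{i}\subseteq\mathcal{R}$ and hence equality. Finally, every chain recurrent component $\mathcal{C}$ is chain transitive, so by the sublemma it lies in a single $\mathcal{M}_{j}$, while $\mathcal{M}_{j}$, being chain transitive, lies in a single component; disjointness forces $\mathcal{C}=\mathcal{M}_{j}$. Thus the $\mathcal{M}_{i}$ are exactly the chain recurrent components, and there are finitely many of them.
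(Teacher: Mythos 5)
The paper gives no proof of this theorem: it is quoted from \cite[Theorem 8.3.3]{ColK14}. So your proposal has to stand on its own against the standard Conley-theoretic argument, whose overall architecture you do reproduce correctly: the connected components $\mathcal{C}_{1},\dots,\mathcal{C}_{n}$ of the chain recurrent set $\mathcal{R}$ (which, as recalled in Subsection \ref{Subsection2.1}, are the maximal chain transitive sets) should form the finest Morse decomposition, and a Morse set that fails to be chain transitive should be split by an attractor--repeller pair of the restricted flow. In the converse direction you are honest about the difficulty: you correctly identify the Conley-theoretic ingredient (failure of chain transitivity on a compact invariant set yields a proper nonempty attractor) and sketch the right refinement argument.

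The genuine gap is in your key sublemma, $\mathcal{R}\subseteq\bigcup_{j}\mathcal{M}_{j}^{\prime}$ for an \emph{arbitrary} Morse decomposition. You argue that if $x\notin\bigcup_{j}\mathcal{M}_{j}^{\prime}$ were chain recurrent, then an $(\varepsilon,T)$-chain from $x$ back to $x$, combined with the genuine connection $\alpha(x)\subseteq\mathcal{M}_{j_{0}}^{\prime}$, $\omega(x)\subseteq\mathcal{M}_{j_{1}}^{\prime}$, would close a cycle among the $\mathcal{M}_{j}^{\prime}$ and contradict condition (ii). But condition (ii) of Definition \ref{defMorsedecomp} only forbids cycles realized by \emph{genuine trajectories} $x_{i}$ with $\alpha(x_{i})\subseteq\mathcal{M}_{j_{i-1}}$ and $\omega(x_{i})\subseteq\mathcal{M}_{j_{i}}$; an $(\varepsilon,T)$-chain supplies no such trajectories, so no contradiction with (ii) follows as stated. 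Closing this hole is exactly the Conley-theoretic content of the theorem: either one converts chains into genuine connecting orbits by a compactness argument as $\varepsilon\rightarrow0$, or, more standardly, one shows that every Morse decomposition arises from a finite increasing sequence of attractors and that $\mathcal{R}\subseteq A\cup A^{\ast}$ for every attractor--repeller pair, using an attractor neighborhood mapped into its own interior, which chains with small jumps cannot leave. This step is of the same order of difficulty as the attractor-existence step you flag in the converse direction, but here it is unflagged and replaced by an invalid inference. Two smaller repairs: in the isolated-invariance check, $\alpha(x),\omega(x)\subseteq\mathcal{C}_{i}$ does not directly yield $x\in\mathcal{C}_{i}$ --- you must first show $x$ is chain recurrent (flow forward to near $\omega(x)$, chain through the chain transitive set $\mathcal{C}_{i}$ to near $\alpha(x)$, jump onto the backward orbit of $x$, flow to $x$) and choose $N_{i}$ with $\overline{N_{i}}\cap\mathcal{R}=\mathcal{C}_{i}$; and for \textquotedblleft finer\textquotedblright\ as defined in the paper you need every $\mathcal{M}_{j}^{\prime}$ to \emph{contain} some $\mathcal{C}_{i}$, not only the containment you state --- this does follow, since $\mathcal{M}_{j}^{\prime}$ contains an $\omega$-limit set, which is chain transitive and hence, by your sublemma and disjointness of the Morse sets, drags a whole chain recurrent component into $\mathcal{M}_{j}^{\prime}$.
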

	
	\subsection{Linear flows and Selgrade's theorem\label{Subsection2.2}}
	
	We will consider vector bundles $\mathcal{V}=B\times H$, where $B$ is a
	compact metric base space and $H$ is a finite dimensional Hilbert space of
	dimension $d$. A linear flow $\Phi=(\theta,\varphi)$ on $B\times H$ is a flow
	of the form
	\[
	\Phi:\mathbb{R}\times B\times H\rightarrow B\times H,\,\Phi_{t}(b,x)=(\theta
	_{t}b,\varphi(t,b,x))\mbox{ for }(t,b,x)\in\mathbb{R}\times B\times H,
	\]
	where $\theta$ is a flow on the base space $B$ and $\varphi(t,b,x)$ is linear
	in $x$, i.e., $\varphi(t,b,\alpha_{1}x_{1}+\alpha_{2}x_{2})=\alpha_{1}%
	\varphi(t,b,x_{1})+\alpha_{2}\varphi(t,b,x_{2})$ for $\alpha_{1},\alpha_{2}%
	\in\mathbb{R}$ and $x_{1},x_{2}\in H$. We also write $\Phi_{t}(b,\alpha
	_{1}x_{1}+\alpha_{2}x_{2})=\alpha_{1}\Phi_{t}(b,x_{1})+\alpha_{2}\Phi
	_{t}(b,x_{2})$. A closed subset $\mathcal{V}$ of $B\times H$ that intersects
	each fiber $\{b\}\times H,b\in B$, in a linear subspace of constant dimension
	is a subbundle. Let $\mathbb{P}H$ be the projective space for $H$ and denote
	the projection $H\setminus\{0\}\rightarrow\mathbb{P}H$ as well as the
	corresponding map $B\times(H\setminus\{0_{H}\})\rightarrow B\times\mathbb{P}H$
	by the letter $\mathbb{P}$. A linear flow $\Phi$ induces a flow $\mathbb{P}%
	\Phi$ on the projective bundle $B\times\mathbb{P}H$. A metric on $\mathbb{P}H$
	is defined by
	\begin{equation}
		d(p_{1},p_{2})=\min\left\{  \left\Vert \frac{x}{\left\Vert x\right\Vert
		}-\frac{y}{\left\Vert y\right\Vert }\right\Vert ,\left\Vert \frac
		{x}{\left\Vert x\right\Vert }+\frac{y}{\left\Vert y\right\Vert }\right\Vert
		\right\}  \mbox{ for }p_{1}=\mathbb{P}x,p_{2}=\mathbb{P}y.\label{metric_P}%
	\end{equation}
	Then $B\times\mathbb{P}H$ becomes a compact metric space by defining the
	metric as the maximum of the distances in $B$ and $\mathbb{P}H$.
	
	Recall that for a linear flow $\Phi$ two nontrivial invariant subbundles
	$(\mathcal{V}^{+},\mathcal{V}^{-})$ with $B\times H=\mathcal{V}^{+}%
	\oplus\mathcal{V}^{-}$ are exponentially separated if there are $c,\mu>0$
	with
	\begin{equation}
		\left\Vert \Phi_{t}(b,x^{+})\right\Vert \leq ce^{-\mu t}\left\Vert \Phi
		_{t}(b,x^{-})\right\Vert ,t\geq0,\mbox{ for }(b,x^{\pm})\in\mathcal{V}^{\pm
		},\,\left\Vert x^{+}\right\Vert =\left\Vert x^{-}\right\Vert . \label{exp_sep}%
	\end{equation}
	The following is Selgrade's theorem for linear flows; cf. \cite[Theorem
	9.2.5]{ColK14}, and \cite[Theorem 5.1.4]{ColK00} for the result on exponential separation.
	
	\begin{theorem}
		\label{Theorem_Selgrade}Let $\Phi=(\theta,\varphi):\mathbb{R}\times B\times
		H\rightarrow B\times H$ be a linear flow on the vector bundle $B\times H$ with
		chain transitive flow $\theta$ on the base space $B$.\ Then the projected flow
		$\mathbb{P}\Phi$ on $B\times\mathbb{P}H$ has a finite number of chain
		recurrent components $\mathcal{M}_{1},\ldots,\mathcal{M}_{\ell},1\leq\ell\leq
		d=\dim H$. These components form the finest Morse decomposition for
		$\mathbb{P}\Phi$, and they are linearly ordered. The Morse sets will be
		numbered such that $\mathcal{M}_{1}\preceq\cdots\preceq\mathcal{M}_{\ell}$.
		Their lifts
		\[
		\mathcal{V}_{i}=\mathbb{P}^{-1}\mathcal{M}_{i}:=\left\{  (b,x)\in B\times
		H\left\vert x\not =0\Rightarrow(b,\mathbb{P}x)\in\mathcal{M}_{i}\right.
		\right\}  ,
		\]
		are subbundles, called the Selgrade bundles. They form a continuous bundle
		decomposition (a Whitney sum)
		\[
		B\times H=\mathcal{V}_{1}\oplus\cdots\oplus\mathcal{V}_{\ell}.
		\]
		This Selgrade decomposition is the finest decomposition into exponentially
		separated subbundles: For any exponentially separated subbundles
		$(\mathcal{V}^{+},\mathcal{V}^{-})$ there is $1\leq j<\ell$ with
		\[
		\mathcal{V}^{+}=\mathcal{V}_{1}\oplus\cdots\oplus\mathcal{V}_{j}%
		\;\mbox{and\ }\mathcal{V}^{-}=\mathcal{V}_{j+1}\oplus\cdots\oplus
		\mathcal{V}_{\ell}\,.
		\]
		Conversely, subbundles $\mathcal{V}^{+}$ and $\mathcal{V}^{-}$ defined in this
		way are exponentially separated.
	\end{theorem}
	
	\subsection{The Morse spectrum for linear flows\label{Subsection2.4}}
	
	For linear flows $\Phi$ on vector bundles, a number of spectral notions and
	their relations have been considered; cf., e.g., Sacker and Sell \cite{SacS},
	Johnson, Palmer, and Sell \cite{JoPS87}, Kawan and Stender \cite{KawS12}. An
	appropriate spectral notion in the present context is provided by the Morse
	spectrum defined as follows; cf. Colonius and Kliemann \cite{ColK14} and Alves
	and San Martin \cite{AlvSM16}, and for generalizations cf. Gr\"{u}ne
	\cite{Grue00} and Patr\~{a}o and San Martin \cite{PatSM07}.
	
	For $\varepsilon,T>0$ let an $(\varepsilon,T)$-chain $\zeta$ of $\mathbb{P}%
	\Phi$ be given by $n\in\mathbb{N}\mathbf{,}$ $T_{0},\ldots,T_{n-1}\geq T$, and
	$(b_{0},p_{0}),\ldots,(b_{n},p_{n})\in B\times\mathbb{P}H$ with $d(\mathbb{P}%
	\Phi(T_{i},b_{i},p_{i}),(b_{i+1},p_{i+1}))<\varepsilon$ for $i=0,\ldots,n-1$.
	With total time $\tau=\sum_{i=0}^{n-1}T_{i}$ let the exponential growth rate
	of $\zeta$ be
	\[
	\lambda(\zeta):=\frac{1}{\tau}\left(  \sum_{i=0}^{n-1}\log\left\Vert
	\Phi(T_{i},b_{i},x_{i})\right\Vert -\log\left\Vert (b_{i},x_{i})\right\Vert
	\right)  \mbox{ with }\mathbb{P}x_{i}=\,p_{i}.
	\]
	Define the Morse spectrum of a subbundle $\mathcal{V}_{i}=\mathbb{P}%
	^{-1}\mathcal{M}_{i}$ generated by $\mathcal{M}_{i}$ as
	\[
	\mathbf{\Sigma}_{Mo}(\mathcal{V}_{i};\Phi)=\left\{
	\begin{array}
		[c]{c}%
		\lambda\in\mathbb{R}:\mbox{\ there\ are}\;\varepsilon^{k}\rightarrow
		0,\,T^{k}\rightarrow\infty\;\,\mbox{and}\,\;\\
		(\varepsilon^{k},\,T^{k})\mbox{-chains}\mathrm{\;}\zeta^{k}%
		\;\mbox{in}\;\mathcal{M}_{i}\;\mbox{with}\mathrm{\;}\lambda(\zeta
		^{k})\rightarrow\lambda\;\mbox{as}\;k\rightarrow\infty
	\end{array}
	\right\}  .
	\]
	The Morse spectrum has the following properties; cf. \cite[Theorem 9.3.5 and
	Theorem 9.4.1]{ColK14}
	
	\begin{theorem}
		\label{Theorem_Morse}For a linear flow $\Phi$ on a vector bundle $B\times H$
		with chain transitive base space $B$ the Morse spectrum $\mathbf{\Sigma}%
		_{Mo}(\mathcal{V}_{i};\Phi)$ of a Selgrade bundle $\mathcal{V}_{i}$ is a
		compact interval, and for every $(b,x)\in B\times(H\setminus\{0_{H}\})$ the
		Lyapunov exponent $\lambda(b,x)=\lim\sup_{t\rightarrow\infty}\frac{1}{t}%
		\log\left\Vert \varphi(t,b,x)\right\Vert $ is contained in some
		$\mathbf{\Sigma}_{Mo}(\mathcal{V}_{i};\Phi)$.
	\end{theorem}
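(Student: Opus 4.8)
The plan is to prove the three assertions---boundedness, the interval and closedness properties (hence compactness), and the inclusion of Lyapunov exponents---separately, exploiting throughout that $\lambda(\zeta)$ is invariant under rescaling of the chosen representatives $x_{i}$ (both $\log\|\Phi(T_{i},b_{i},x_{i})\|$ and $\log\|x_{i}\|$ shift by the same constant), so that the spectrum is well defined on the projective bundle. First I would establish a uniform bound: since $B$ is compact and $\Phi$ is a continuous linear flow, there is a constant $a>0$ with $e^{-a|t|}\|x\|\le\|\varphi(t,b,x)\|\le e^{a|t|}\|x\|$ for all $(b,x)$ and $|t|\le 1$, and by the cocycle property each summand $g_{i}:=\frac{1}{T_{i}}\bigl(\log\|\Phi(T_{i},b_{i},x_{i})\|-\log\|x_{i}\|\bigr)$ lies in $[-a,a]$. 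As $\lambda(\zeta)=\frac{1}{\tau}\sum_{i}T_{i}g_{i}$ is a convex combination of the $g_{i}$, every $\lambda(\zeta)\in[-a,a]$, so $\mathbf{\Sigma}_{Mo}(\mathcal{V}_{i};\Phi)\subset[-a,a]$ is bounded.

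Next, for closedness and the interval property I would argue as follows. Closedness follows from a diagonal extraction: if $\lambda^{m}\to\lambda$ with each $\lambda^{m}\in\mathbf{\Sigma}_{Mo}(\mathcal{V}_{i};\Phi)$, choose for each $m$ a chain $\zeta^{m}$ in $\mathcal{M}_{i}$ with parameters $(\varepsilon^{m},T^{m})$ satisfying $\varepsilon^{m}<1/m$, $T^{m}>m$ and $|\lambda(\zeta^{m})-\lambda|<1/m$, which exhibits $\lambda$ as a spectral value. The interval property is the heart of the matter and uses that $\mathcal{M}_{i}$ is chain transitive (Theorem \ref{Selgrade}): given spectral values $\lambda_{1}<\lambda_{2}$ realized by chains $\zeta_{1},\zeta_{2}$ and any $s\in[0,1]$, I would concatenate many copies of $\zeta_{1}$ and $\zeta_{2}$, inserting, by chain transitivity of $\mathcal{M}_{i}$, short connecting $(\varepsilon,T)$-chains between consecutive blocks, so that the total time spent in $\zeta_{1}$-blocks is a fraction $s$ and in $\zeta_{2}$-blocks a fraction $1-s$ of the grand total $\tau$. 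Since growth contributions add along a chain and the connecting chains have bounded length (hence, by the bound $a$, bounded total growth contribution, which becomes negligible relative to $\tau$ as the number of blocks grows), the resulting chain has growth rate converging to $s\lambda_{1}+(1-s)\lambda_{2}$; letting $s$ range over $[0,1]$ and refining $\varepsilon\to0$, $T\to\infty$ shows every value in $[\lambda_{1},\lambda_{2}]$ is spectral.

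Finally, for the Lyapunov exponents I would fix $(b,x)$ with $x\neq0$ and consider the projected point $(b,\mathbb{P}x)$. Its $\omega$-limit set under $\mathbb{P}\Phi$ is nonempty, compact, connected, invariant and internally chain transitive; since the $\mathcal{M}_{i}$ are the chain recurrent components and are pairwise disjoint, this $\omega$-limit set lies in a single Morse set $\mathcal{M}_{i}$. Thus the forward projective trajectory eventually remains in an arbitrarily small neighborhood of $\mathcal{M}_{i}$, and I would build $(\varepsilon,T)$-chains in $\mathcal{M}_{i}$ by following long pieces of the genuine trajectory and making small jumps back onto $\mathcal{M}_{i}$. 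Along a genuine trajectory the telescoping summands give exactly $\frac{1}{t}\log\bigl(\|\varphi(t,b,x)\|/\|x\|\bigr)$, whose $\limsup$ equals $\lambda(b,x)$; since the jumps are small they perturb the norms (by uniform continuity of $\varphi$ on compacta) by an arbitrarily small amount, so along a subsequence of times realizing the $\limsup$ the chain growth rates converge to $\lambda(b,x)$. Hence $\lambda(b,x)\in\mathbf{\Sigma}_{Mo}(\mathcal{V}_{i};\Phi)$.

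The main obstacle is the interval property: one must verify that the connecting chains, needed to keep the concatenations inside $\mathcal{M}_{i}$, are genuinely negligible---their number grows with the block count but their individual time and growth stay bounded, so a careful accounting of $\tau$ is required to show the error vanishes in the limit and the prescribed convex combination $s\lambda_{1}+(1-s)\lambda_{2}$ is attained.
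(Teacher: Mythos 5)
First, note that the paper itself does not prove this theorem: it is quoted from \cite{ColK14} (Theorems 9.3.5 and 9.4.1), so your outline is to be measured against that standard proof, whose architecture you have essentially reproduced --- boundedness via a uniform exponential estimate $e^{-a\left\vert t\right\vert }\left\Vert x\right\Vert \leq\left\Vert \varphi(t,b,x)\right\Vert \leq e^{a\left\vert t\right\vert }\left\Vert x\right\Vert $ and the observation that $\lambda(\zeta)$ is a convex combination of the per-step rates, closedness by diagonal extraction, the interval property by concatenation of chains using chain transitivity of $\mathcal{M}_{i}$, and the Lyapunov-exponent statement via the fact that the projective $\omega$-limit set of $(b,\mathbb{P}x)$ is chain transitive and hence lies in a single chain recurrent component. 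The boundedness and closedness steps are correct as written.

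The genuine gap is in the interval property, exactly where you located the difficulty, but the mechanism you propose is quantitatively wrong. You claim the connecting chains' contribution ``becomes negligible relative to $\tau$ as the number of blocks grows.'' It does not: a connector is needed between \emph{every} pair of consecutive blocks (two copies of $\zeta_{1}$ cannot be concatenated directly unless $\zeta_{1}$ ends where it begins), so the number of connectors grows linearly with the number of blocks; moreover every jump time in the assembled $(\varepsilon,T)$-chain must be at least $T$, so each connector costs time at least $T$ while each block has \emph{fixed} total time $\tau_{1}$ or $\tau_{2}$. Hence the fraction of total time spent in connectors tends to a positive constant as the block count grows, and the limit rate is contaminated by the uncontrolled connector rates with non-vanishing weight. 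Increasing the number of blocks only tunes the proportion $s$ (through density of $n_{1}\tau_{1}/(n_{1}\tau_{1}+n_{2}\tau_{2})$); the error must instead be killed by making the \emph{blocks} long relative to the connectors. Concretely, one first needs the lemma --- which you assert but do not prove --- that on the compact chain transitive set $\mathcal{M}_{i}$ there is, for fixed $(\varepsilon,T)$, a uniform bound $\tau_{0}(\varepsilon,T)$ on the total time of a connecting $(\varepsilon,T)$-chain between any two points (a covering argument using $(\varepsilon/2,T)$-chains). Then, since $\lambda_{1},\lambda_{2}$ are realized by $(\varepsilon^{k},T^{k})$-chains with $T^{k}\rightarrow\infty$, for the target parameters $(\varepsilon,T)$ one may choose blocks with rate within $\delta$ of $\lambda_{j}$ and total time at least $a\,\tau_{0}(\varepsilon,T)/\delta$ --- the connectors need only meet the final parameters $(\varepsilon,T)$, while the blocks are much finer and longer chains --- so the connector weight is at most of order $\delta/a$ and the rate error is $O(\delta)$. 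A secondary, sketch-level point: in the Lyapunov-exponent step the trajectory pieces between jumps must have durations confined to a compact window, say $[T,2T]$, since only then are the flow and the additive growth cocycle uniformly continuous and the ``snapping onto $\mathcal{M}_{i}$'' perturbation controllable; for pieces of unbounded duration, continuity of $\Phi_{t}$ is not uniform in $t$ and your perturbation estimate fails.
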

	
	The spectral intervals $\mathbf{\Sigma}_{Mo}(\mathcal{V}_{i};\Phi)$ need not
	be disjoint. In particular, there may exist two \textquotedblleft
	center\textquotedblright\ subbundles with $0$ in the Morse spectrum; cf.
	Salamon and Zehnder \cite[Example 2.14]{SalZ88} and also Example
	\ref{Example6.8}.
	
	\section{Selgrade's theorem for affine flows and the Poincar\'{e}
		sphere\label{Section3}}
	
	In this section, affine flows are lifted to linear flows on an augmented state
	space and the Selgrade decomposition on this space is analyzed.
	
	The following construction of affine flows is taken from Colonius and Santana
	\cite{ColSan11}.
	
	\begin{definition}
		\label{Definition_affine}Let $B\times H$ be a vector bundle with compact
		metric base space $B$. A continuous map $\Psi=(\theta,\psi):\mathbb{R}\times
		B\times H\rightarrow B\times H$ is called an affine flow on $B\times H$ if
		there are a linear flow $\Phi=(\theta,\varphi)$ and a function $f:B\rightarrow
		L^{\infty}(\mathbb{R},H)$ such that $f$ satisfies
		\begin{equation}
			f(b)(t+s)=f(\theta_{s}b)(t)\mbox{ for all }b\in B\mbox{ and almost all }t,s\in
			\mathbb{R}, \label{CPAA3}%
		\end{equation}
		and for all $(t,b,x)\in\mathbb{R}\times B\times H$%
		\begin{equation}
			\Psi_{t}(b,x)=(\theta_{t}b,\psi(t,b,x))=(\theta_{t}b,\varphi(t,b,x)+\int
			_{0}^{t}\varphi(t-s,\theta_{s}b,f(b)(s))ds). \label{CPAA4a}%
		\end{equation}
		
	\end{definition}
	
	The base flows of $\Psi$ and $\Phi$ coincide and the integral in
	(\ref{CPAA4a}) is a Lebesgue integral in the $H$-component. The flow property
	of $\Psi$ is expressed by the cocycle property $\psi(t+s,b,x)=\psi
	(t,\theta_{s}b,\psi(s,b,x))$, which follows from (\ref{CPAA3}). With
	$f(b,s):=f(b)(s),s\in\mathbb{R}$, formula (\ref{CPAA4a}) can be written in the
	more concise form%
	\begin{equation}
		\Psi_{t}(b,x)=\Phi_{t}(b,x)+\int_{0}^{t}\Phi_{t-s}(\theta_{s}b,f(b,s))ds.
		\label{CPAA4}%
	\end{equation}
	We will always assume that the base flow $\theta$ on $B$ is chain transitive.
	Next we formulate a simple but fundamental construction for the present paper.
	
	\begin{proposition}
		\label{Proposition_lift}Any affine flow $\Psi=(\theta,\psi)$ on $B\times H$
		can be lifted to a linear flow $\Psi^{1}$ on $B\times H^{1}$, $H^{1}%
		:=H\times\mathbb{R}$, by defining for $(t,b,x,r)\in\mathbb{R}\times\left(
		B\times H\times\mathbb{R}\right)  $,
		\[
		\Psi_{t}^{1}(b,x,r)=\left(  \theta_{t}b,\psi^{1}(t,b,x,r\right)  ,r)=\left(
		\Phi_{t}(b,x)+r\int_{0}^{t}\Phi_{t-s}(\theta_{s}b,f(b,s))ds,r\right)  .
		\]
		
	\end{proposition}
	
	\begin{proof}
		Continuity and the flow properties are obvious. We prove linearity. For
		$\alpha,\beta\in\mathbb{R}$ and $(b,x,r),\allowbreak(b,y,s)\in B\times
		H\times\mathbb{R}$ one has
		\[%
		\begin{array}
			[c]{l}%
			\Psi_{t}^{1}(b,\alpha(x,r)+\beta(y,s))\\
			=\left(  \Phi_{t}(b,\alpha x+\beta y)+(\alpha r+\beta s)\int_{0}^{t}%
			\Phi_{t-\sigma}(\theta_{\sigma}b,f(b,\sigma))d\sigma,\alpha r+\beta s\right)
			\\
			=(\alpha(\Phi_{t}(b,x)+r\int_{0}^{t}\Phi_{t-\sigma}(\theta_{\sigma
			}b,f(b,\sigma))d\sigma\\
			+\,\,\beta(\Phi_{t}(b,y)+s\int_{0}^{t}\Phi_{t-\sigma}(\theta_{\sigma
			}b,f(b,\sigma))d\sigma),\alpha r+\beta s)\\
			=\alpha\Psi_{t}^{1}(b,x,r)+\beta\Psi_{t}^{1}(b,y,s).
		\end{array}
		\]
		
	\end{proof}
	
	We will apply Selgrade's theorem to the linear flow $\Psi^{1}$. Define subsets
	of $H^{1}$ by $H^{1,0}=H\times\{0\}$ and $\,H^{1,1}=H\times\left(
	\mathbb{R}\setminus\{0\}\right)  $. One obtains subsets of $\mathbb{P}H^{1}$
	given by
	\[
	\mathbb{P}H^{1,0}=\{\mathbb{P}(x,0)\in\mathbb{P}H^{1}\left\vert x\in H\right.
	\},\,\,\mathbb{P}H^{1,1}=\{\mathbb{P}(x,r)\in\mathbb{P}H^{1}\left\vert x\in
	H,r\not =0\right.  \}.
	\]
	Note that $\mathbb{P}H^{1,1}=\mathbb{P}\left(  H\times\{1\}\right)  $. The
	projective space $\mathbb{P}H^{1}=\overline{\mathbb{P}H^{1,1}}$ is the
	disjoint union of these subsets, the set $\mathbb{P}H^{1,0}$ is closed and the
	set $\mathbb{P}H^{1,1}$ is open. For the unit sphere $\mathbb{S}H^{1}$ of
	$H^{1}$ denote the northern hemisphere and the equator by $\mathbb{S}^{+}%
	H^{1}:=\left\{  (x,r)\in\mathbb{S}H^{1}\left\vert x\in H,r>0\right.  \right\}
	$ and $\mathbb{S}^{0}H^{1}=\{(x,0)\in\mathbb{S}H^{1}\left\vert x\in H\right.
	\}$, respectively. Note that $\mathbb{P}H^{1,1}$ can be identified with the
	northern hemisphere $\mathbb{S}^{+}H^{1}$.
	
	\begin{definition}
		The Poincar\'{e} sphere bundle is given by $B\times\mathbb{S}H^{1}$ and the
		projective Poincar\'{e} bundle is $B\times\mathbb{P}H^{1}$.
	\end{definition}
	
	The linear flow $\Psi^{1}$ on $B\times H^{1}$ induces a flow $\mathbb{P}
	\Psi^{1}$ on the projective bundle $B\times\mathbb{P}H^{1}$. It can be
	restricted to $B\times\mathbb{P}H^{1,i},i=0,1$, since under the flow $\Psi
	^{1}$ the last component remains fixed. The following proposition shows that
	$\mathbb{P}\Psi^{1}$ restricted to $B\times\mathbb{P}H^{1,0}$ is conjugate to
	the flow induced by the linear part $\Phi$ of $\Psi$ on $B\times\mathbb{P}H$,
	and that the flow $\Psi$ on $B\times H$ is conjugate to the flow
	$\mathbb{P}\Psi^{1}$ restricted to $B\times\mathbb{P}H^{1,1}$.
	
	\begin{proposition}
		\label{Proposition_e}(i) For $(b,x)\in B\times H$ the equality $\Psi_{t}
		^{1}(b,x,0)=(\Phi_{t}(b,x),0),\allowbreak t\in\mathbb{R}$, holds, and the
		projective map
		\[
		h^{0}:B\times\mathbb{P}H\rightarrow B\times\mathbb{P}H^{1,0},h^{0}
		(b,\mathbb{P}x)=(b,\mathbb{P}(x,0)),
		\]
		is a conjugacy of the flows $\mathbb{P}\Phi$ and $\mathbb{P}\Psi^{1}$
		restricted to $B\times\mathbb{P}H^{1,0}$. In particular, the chain recurrent
		components $\mathcal{M}_{i}=\mathbb{P}\mathcal{V}_{i}$ of $\mathbb{P}\Phi$
		yield the chain recurrent components $h^{0}(\mathcal{M}_{i})=\mathbb{P}\left(
		\mathcal{V}_{i}\times\{0\}\right)  ,i\in\{1,\ldots,\ell\}$, of $\mathbb{P}
		\Psi^{1}$ restricted to $B\times\mathbb{P}H^{1,0}$, and their order is preserved.
		
		(ii) The map
		\[
		h^{1}:B\times H\rightarrow B\times\mathbb{P}H^{1,1},(b,x)\mapsto
		\mathbb{P}(b,x,1)=(b,\mathbb{P}(x,1)),
		\]
		is a conjugacy of the flows $\Psi$ on $B\times H$ and $\mathbb{P}\Psi^{1}$
		restricted to $B\times\mathbb{P}H^{1,1}$.
		
		(iii) For $\varepsilon,T>0$ any $(\varepsilon,T)$-chain in $B\times H$ is
		mapped by $h^{1}$ onto a $(2\varepsilon,T)$-chain in $B\times\mathbb{P}%
		H^{1,1}$, hence any chain transitive set $C\subset B\times H$ is mapped onto a
		chain transitive set $h^{1}(C)\subset B\times\mathbb{P}H^{1,1}$.
		
		(iv) For a subset $C\subset B\times H$ the set $\{x\in H\left\vert (b,x)\in
		C\mbox{ for some }b\in B\right.  \}$ is bounded if and only if $\overline
		{h^{1}(C)}\cap(B\times\mathbb{P}H^{1,0})=\varnothing$.
	\end{proposition}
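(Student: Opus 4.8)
The plan is to treat (i) and (ii) as direct verifications and to extract from the lift formula (\ref{3.1}) the two evaluations that drive everything: setting $r=0$ annihilates the integral term, so $\psi^{1}(t,b,x,0)=\varphi(t,b,x)$ and hence $\Psi_{t}^{1}(b,x,0)=(\Phi_{t}(b,x),0)$; setting $r=1$ reproduces (\ref{CPAA4}), so $\psi^{1}(t,b,x,1)=\psi(t,b,x)$. For (i) I would first note that $h^{0}$ is well defined on projective classes (if $x'=\lambda x$ then $(x',0)=\lambda(x,0)$) and is a homeomorphism, being simply the identification of $\mathbb{P}H$ with the projectivization $\mathbb{P}H^{1,0}$ of the subspace $H\times\{0\}$, crossed with $\mathrm{id}_{B}$. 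The intertwining relation $h^{0}(\mathbb{P}\Phi_{t}(b,\mathbb{P}x))=\mathbb{P}\Psi^{1}_{t}(h^{0}(b,\mathbb{P}x))$ then collapses, on both sides, to $(\theta_{t}b,\mathbb{P}(\varphi(t,b,x),0))$ via the $r=0$ evaluation. Since a conjugacy carries chain transitive sets to chain transitive sets and preserves the order of the induced finest Morse decomposition, the last sentence of (i) follows once one observes $h^{0}(\mathbb{P}\mathcal{V}_{i})=\mathbb{P}(\mathcal{V}_{i}\times\{0\})$, which is immediate from the definitions.

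For (ii) the point is that $x\mapsto\mathbb{P}(x,1)$ is the standard affine chart: every line in $H^{1}$ with nonzero last coordinate meets $H\times\{1\}$ in exactly one point, so $h^{1}$ is a bijection of $B\times H$ onto $B\times\mathbb{P}H^{1,1}$ and is a homeomorphism. The intertwining $h^{1}(\Psi_{t}(b,x))=\mathbb{P}\Psi^{1}_{t}(h^{1}(b,x))$ again reduces both sides to $(\theta_{t}b,\mathbb{P}(\psi(t,b,x),1))$, this time via the $r=1$ evaluation; this is where the conjugacy genuinely uses that the inhomogeneous part of $\Psi$ is recovered on the slice $r=1$.

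The substantive point is (iii), because $B\times H$ is not compact and one must control the chart map near infinity. I would prove that $h^{1}$ is uniformly continuous, in fact Lipschitz, and then read off $\delta(\varepsilon)$ as its modulus of continuity. Concretely, factor $x\mapsto\mathbb{P}(x,1)$ through the sphere as $\mathbb{P}\circ F$ with $F(x)=(x,1)/\sqrt{\|x\|^{2}+1}\in\mathbb{S}H^{1}$. A direct computation of the differential gives, for a unit $h\in H$,
\[
\|DF(x)h\|^{2}=\rho^{-2}-\rho^{-4}\langle x,h\rangle^{2}\le\rho^{-2}\le1,\qquad \rho:=\sqrt{\|x\|^{2}+1},
\]
so $F$ is $1$-Lipschitz; since the projection $\mathbb{S}H^{1}\to\mathbb{P}H^{1}$ is $1$-Lipschitz by the $\min$ in (\ref{metric_P}), the chart map is $1$-Lipschitz, and hence so is $h^{1}$ (its $B$-component being the identity). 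Given an $(\varepsilon,T)$-chain $(b_{i},x_{i})$ of $\Psi$, the images $h^{1}(b_{i},x_{i})$ lie in $B\times\mathbb{P}H^{1,1}$, and by the conjugacy (ii) one has $\mathbb{P}\Psi^{1}_{T_{i}}(h^{1}(b_{i},x_{i}))=h^{1}(\Psi_{T_{i}}(b_{i},x_{i}))$; each jump is therefore the $h^{1}$-image of a jump smaller than $\varepsilon$, hence smaller than $\delta(\varepsilon)$ with $\delta(\varepsilon)\to0$ (indeed one may take $\delta(\varepsilon)=\varepsilon$). The feature that makes this work despite non-compactness is precisely the decay $\|DF(x)\|\le\rho^{-1}\to0$: the chart contracts neighborhoods of infinity.

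Finally, (iv) is a compactness argument resting on the same geometry. If $\{x:(b,x)\in C\}$ is bounded by $R$, then $\{\mathbb{P}(x,1):\|x\|\le R\}$ is the continuous image of a compact ball, hence compact, and every such point has last-coordinate fraction $1/\sqrt{\|x\|^{2}+1}\ge1/\sqrt{R^{2}+1}>0$, so it is bounded away from $\mathbb{P}H^{1,0}$; thus $\overline{h^{1}(C)}$ lies in this compact set and misses $B\times\mathbb{P}H^{1,0}$. Conversely, if the set is unbounded, choose $(b_{n},x_{n})\in C$ with $\|x_{n}\|\to\infty$ and, by compactness of $B$ and of the unit sphere, pass to a subsequence with $b_{n}\to b^{\ast}$ and $x_{n}/\|x_{n}\|\to\xi$; then $F(x_{n})\to(\xi,0)\in\mathbb{S}^{0}H^{1}$, so $h^{1}(b_{n},x_{n})\to(b^{\ast},\mathbb{P}(\xi,0))\in B\times\mathbb{P}H^{1,0}$, producing a point of $\overline{h^{1}(C)}\cap(B\times\mathbb{P}H^{1,0})$. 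I expect the only place needing real care is the uniform estimate in (iii); the remainder is bookkeeping of the chart identifications.
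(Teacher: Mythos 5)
Your proposal is correct and follows essentially the same route as the paper: direct verification of the two conjugacies via the $r=0$ and $r=1$ slices of the lift (the paper phrases this in homogeneous coordinates), reduction of (iii) to uniform continuity of $h^{1}$ applied to the jumps of the chain, and for (iv) the observation that $\left\Vert x\right\Vert \rightarrow\infty$ exactly when the normalized last coordinate $1/\sqrt{\left\Vert x\right\Vert ^{2}+1}$ tends to $0$, i.e.\ when the image approaches $B\times\mathbb{P}H^{1,0}$. The only substantive difference is that you actually carry out the estimate the paper waves off as an elementary computation: your differential bound $\left\Vert DF(x)h\right\Vert ^{2}=\rho^{-2}-\rho^{-4}\langle x,h\rangle^{2}\leq\rho^{-2}\leq1$ for $F(x)=(x,1)/\sqrt{\left\Vert x\right\Vert ^{2}+1}$, combined with the $1$-Lipschitz projection $\mathbb{S}H^{1}\rightarrow\mathbb{P}H^{1}$ coming from the $\min$ in the metric, is correct and even yields the explicit modulus $\delta(\varepsilon)=\varepsilon$ (under the max product metric), which is stronger than the $\delta(\varepsilon)\rightarrow0$ the proposition requires.
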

	
	\begin{proof}
		(i), (ii) The first assertion in (i) is clear by the definition of $\Psi^{1}$.
		Recall that $\mathbb{P}H=(H\setminus\{0_{H}\})/\thicksim$, where $\thicksim$ is
		the equivalence relation $x\thicksim y$ if $y=\lambda x$ with some
		$\lambda\not =0$. Given a basis of $H$ an atlas of $\mathbb{P}H$ is given by
		$n$ charts $(U_{i},\alpha_{i})$, where $U_{i}$ is the set of equivalence
		classes $[x_{1}:\cdots:x_{d}]$ with $x_{i}\not =0$ (using homogeneous
		coordinates) and $\alpha_{i}:U_{i}\rightarrow\mathbb{R}^{d-1}$ is defined by
		\[
		\alpha_{i}([x_{1}:\cdots:x_{d}])=\left(  \frac{x_{1}}{x_{i}},\ldots
		,\widehat{\frac{x_{i}}{x_{i}}},\ldots,\frac{x_{d}}{x_{i}}\right)  ;
		\]
		here the hat means that the $i$-th entry is omitted. In homogeneous
		coordinates, the levels $\mathbb{P}H^{1,i}$ are described by
		\[
		\mathbb{P}H^{1,i}=\left\{  [x_{1}:\cdots:x_{d}:i]\left\vert (x_{1}%
		,\ldots,x_{d})\in\mathbb{R}^{d}\right.  \right\}  \mbox{ for }i=0,1.
		\]
		Observe that, by homogeneity, $\mathbb{P}H^{1,0}=\left\{  [x_{1}:\cdots
		:x_{d}:0]\left\vert \mbox{ }\left\Vert (x_{1},\ldots,x_{d})\right\Vert
		=1\right.  \right\}  $. Any trajectory of $\mathbb{P}\Psi^{1}$ is obtained as
		the projection of a trajectory of $\Psi^{1}$ with initial condition satisfying
		$r^{0}=0$ or $1$, since $[x_{1}^{0}:\cdots:x_{d}^{0}:r^{0}]=[\frac{x_{1}^{0}%
		}{r^{0}}:\cdots:\frac{x_{d}^{0}}{r^{0}}:1]$ for $r^{0}\not =0$. A trivial
		atlas for $\mathbb{P}H^{1,1}$ is given by $\left\{  (U_{d+1},\alpha
		_{d+1})\right\}  $ proving that $\mathbb{P}H^{1,1}$ is a manifold which is
		diffeomorphic to $\mathbb{R}^{d}$. Observe also that $\mathbb{P}H^{1,0}$ is
		diffeomorphic to $\mathbb{P}^{d-1}$.
		
		In homogeneous coordinates the spaces $\mathbb{P}H$ and $\mathbb{P}H^{1,0}$
		are diffeomorphic under the map associating to $[x_{1}:\cdots:x_{d}]$ the
		value $[x_{1}:\cdots:x_{d}:0]$. For any trajectory $(\theta_{t}b,\psi
		^{1}(t,b,x^{0},r),r)$ of $\Psi^{1}$ in $B\times H^{1,1}$, the projection to
		$\mathbb{P}H^{1,1}\subset\mathbb{P}H^{1}$ is $(\theta_{t}b,[\psi_{1}
		^{1}(t,b,x^{0},r):\cdots:\psi_{d}^{1}(t,b,x^{0},r):r])$, where $\psi_{i}
		^{1}(t,b,x^{0},r)$ is the $i$-th component of $\psi^{1}(t,b,x^{0},r)$. Now the
		conjugacy properties in (i) and (ii) follow. The assertion in (i) on the chain
		recurrent components holds, since the state spaces are compact.
		
		(iii) In view of assertion (ii) it suffices to show that $d((b,x),(b^{\prime
		},x^{\prime}))<\varepsilon$ in $B\times H$ implies $d(h^{1}(b,x),h^{1}%
		(b^{\prime},x^{\prime}))<2\varepsilon$ in $B\times\mathbb{P}H^{1}$. Here\ the
		metric in $\mathbb{P}H$ is defined in (\ref{metric_P}).\textbf{ }Since
		$d(b,b^{\prime})<\varepsilon$ it suffices to estimate the components in the
		Poincar\'{e} sphere $\mathbb{P}H^{1}$. For the projections to $\mathbb{S}%
		H^{1}$ we obtain
		\[
		\left\Vert \frac{(x,1)}{\left\Vert (x,1)\right\Vert }-\frac{(x^{\prime}%
			,1)}{\left\Vert (x^{\prime},1)\right\Vert }\right\Vert =\frac{\left\Vert
			\left(  \left\Vert (x^{\prime},1)\right\Vert x-\left\Vert (x,1)\right\Vert
			x^{\prime},\left\Vert (x^{\prime},1)\right\Vert -\left\Vert (x,1)\right\Vert
			\right)  \right\Vert }{\left\Vert (x,1)\right\Vert \left\Vert (x^{\prime
			},1)\right\Vert }.
		\]
		Observe that $\left\Vert x\right\Vert -\left\Vert x^{\prime}\right\Vert
		\leq\left\Vert x-x^{\prime}\right\Vert <\varepsilon$ and $\left\Vert
		(x,1)\right\Vert -\left\Vert (x^{\prime},1)\right\Vert <\varepsilon$. Thus the
		last component satisfies%
		\[
		\frac{\left\Vert (x^{\prime},1)\right\Vert -\left\Vert (x,1)\right\Vert
		}{\left\Vert (x,1)\right\Vert \left\Vert (x^{\prime},1)\right\Vert
		}<\varepsilon.
		\]
		Concerning the other components we find $\delta(\varepsilon)$ with $\left\vert
		\delta(\varepsilon)\right\vert <\varepsilon$ such that $\left\Vert (x^{\prime
		},1)\right\Vert =\left\Vert (x,1)\right\Vert +\delta(\varepsilon)$. Hence%
		\[
		\left\Vert \left\Vert (x^{\prime},1)\right\Vert x-\left\Vert (x,1)\right\Vert
		x^{\prime}\right\Vert \leq\left\Vert (x,1)\right\Vert \left\Vert x-x^{\prime
		}\right\Vert +\delta(\varepsilon)\left\Vert x\right\Vert <\left\Vert
		(x,1)\right\Vert \varepsilon+\delta(\varepsilon)\left\Vert x\right\Vert
		\]
		implying%
		\[
		\frac{\left\Vert \left\Vert (x^{\prime},1)\right\Vert x-\left\Vert
			(x,1)\right\Vert x^{\prime}\right\Vert }{\left\Vert (x,1)\right\Vert
			\left\Vert (x^{\prime},1)\right\Vert }\leq\frac{\left\Vert (x,1)\right\Vert
			\varepsilon+\delta(\varepsilon)\left\Vert x\right\Vert }{\left\Vert
			(x,1)\right\Vert \left\Vert (x^{\prime},1)\right\Vert }<\varepsilon
		+\delta(\varepsilon)<2\varepsilon.
		\]

		(iv) Consider a sequence $(b^{n},x^{n}),n\in\mathbb{N}$, in $C$. For the
		images $h^{1}(b^{n},x^{n})=(b^{n},\mathbb{P}(x^{n},1))$ the points
		$\mathbb{P}(x^{n},1)$ have homogeneous coordinates satisfying
		\[
		\lbrack x_{1}^{n}:\cdots:x_{d}^{n}:1]=\left[  \frac{x_{1}^{n}}{\left\Vert
			x^{n}\right\Vert }:\cdots:\frac{x_{d}^{n}}{\left\Vert x^{n}\right\Vert }%
		:\frac{1}{\left\Vert x^{n}\right\Vert }\right]  .
		\]
		Then $\left\Vert x^{n}\right\Vert \rightarrow\infty$ if and only if $\frac
		{1}{\left\Vert x^{n}\right\Vert }\rightarrow0$ for $n\rightarrow\infty$
		meaning that the distance of $(b^{n},\mathbb{P}(x^{n},1))$ to $B\times
		\mathbb{P}H^{1,0}$ converges to $0$.
	\end{proof}
	
	Observe that chain transitivity of $h^{1}(C)$ for $C\subset B\times H$ implies
	chain transitivity of the closure $\overline{h^{1}(C)}\subset B\times
	\mathbb{P}H^{1}$.
	
	The Selgrade decomposition provided by Theorem \ref{Theorem_Selgrade} can be
	used for the linear flow $\Psi^{1}$ on $B\times H^{1}$. We obtain
	\begin{equation}
		B\times H^{1}=\mathcal{V}_{1}^{1}\oplus\cdots\oplus\mathcal{V}_{\ell^{1}}%
		^{1}\mbox{ with }1\leq\ell^{1}\leq d+1\mbox{ and }\sum\nolimits_{j=1}%
		^{\ell^{1}}\dim\mathcal{V}_{j}^{1}=d+1, \label{Sel_1}%
	\end{equation}
	and let $\mathcal{M}_{j}^{1}:=\mathbb{P}\mathcal{V}_{j}^{1},j\in
	\{1,\ldots,\ell^{1}\}$, be the associated chain recurrent components of
	$\mathbb{P}\Psi^{1}$ on $B\times\mathbb{P}H^{1}$. Furthermore, $\mathcal{M}%
	_{i}:=\mathbb{P}\mathcal{V}_{i}\subset B\times\mathbb{P}H$ denotes the chain
	recurrent component corresponding to a Selgrade bundle $\mathcal{V}_{i}$ of
	the linear part $\Phi$ of $\Psi$.
	
	Note that a Selgrade bundle $\mathcal{V}_{j}^{1}$ of $\Psi^{1}$ satisfies
	$\mathcal{V}_{j}^{1}\cap\left(  B\times H^{1,1}\right)  \not =\varnothing$ if
	and only if there is $(b,x,r)\in\mathcal{V}_{j}^{1}$ with $r\not =0$ and this
	is equivalent to $\mathcal{M}_{j}^{1}\cap\left(  B\times\mathbb{P}%
	H^{1,1}\right)  \not =\varnothing$. Furthermore, a Selgrade bundle satisfies
	$\mathcal{V}_{j}^{1}\cap\left(  B\times H^{1,0}\right)  =B\times\{\left(  0_{H},0\right)  \}$ if
	and only if $\mathcal{M}_{j}^{1}\subset B\times\mathbb{P}H^{1,1}$.
	
	The detailed description of the Selgrade bundles $\mathcal{V}_{j}^{1}$ of
	$\Psi^{1}$ will be based on dimension arguments. We prepare this analysis by
	the following lemma discussing the relations between the subbundles
	$\mathcal{V}_{i}\times\{0\}$ and the Selgrade bundles $\mathcal{V}_{j}^{1}$.
	
	\begin{lemma}
		\label{Lemma_dim1}(i) For every $i\in\{1,\ldots,\ell\}$ there is
		$j(i)\in\{1,\ldots,\ell^{1}\}$ with $\mathbb{P}\left(  \mathcal{V}_{i}%
		\times\{0\}\right)  \allowbreak\subset\mathcal{M}_{j(i)}^{1}$ and
		$\mathcal{V}_{i}\times\{0\}\subset\mathcal{V}_{j(i)}^{1}$.
		
		(ii) A subbundle $\mathcal{V}_{i}\times\{0\},i\in\{1,\ldots,\ell\}$, is a
		proper subset of the Selgrade bundle $\mathcal{V}_{j}^{1}$ containing it if
		and only if
		\begin{equation}
			\dim\mathcal{V}_{j}^{1}>\sum\nolimits_{k\in I(j)}\dim\left(  \mathcal{V}%
			_{k}\times\{0\}\right)  =\sum\nolimits_{k\in I(j)}\dim\mathcal{V}_{k},
			\label{dim4}%
		\end{equation}
		where $I(j)$ is the set of all indices $k$ with $\mathcal{V}_{k}%
		\times\{0\}\subset\mathcal{V}_{j}^{1}$.
	\end{lemma}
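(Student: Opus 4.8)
The plan is to handle the two parts separately; part~(i) is essentially bookkeeping, while the substance is in part~(ii).

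For part~(i) I would start from Proposition~\ref{Proposition_e}(i): the conjugacy $h^0$ carries the chain recurrent component $\mathcal{M}_i$ of $\mathbb{P}\Phi$ onto $\mathbb{P}(\mathcal{V}_i\times\{0\})$, which is therefore a connected, chain transitive set for $\mathbb{P}\Psi^1$ restricted to the closed invariant set $B\times\mathbb{P}H^{1,0}$. Since an $(\varepsilon,T)$-chain staying inside $B\times\mathbb{P}H^{1,0}$ is in particular an $(\varepsilon,T)$-chain for the full flow $\mathbb{P}\Psi^1$, the set $\mathbb{P}(\mathcal{V}_i\times\{0\})$ is chain transitive for $\mathbb{P}\Psi^1$ as well, hence contained in a unique chain recurrent component $\mathcal{M}_j^1$ (uniqueness because the $\mathcal{M}_j^1$ are pairwise disjoint and $\mathbb{P}(\mathcal{V}_i\times\{0\})$ is connected). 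Applying $\mathbb{P}^{-1}$ to $\mathbb{P}(\mathcal{V}_i\times\{0\})\subset\mathcal{M}_j^1$ and using $\mathcal{V}_j^1=\mathbb{P}^{-1}\mathcal{M}_j^1$ gives $\mathcal{V}_i\times\{0\}\subset\mathcal{V}_j^1$. This assigns to each $i$ a unique index $j=j(i)$, so the sets $I(j)$ partition $\{1,\dots,\ell\}$.

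For part~(ii) the first step is the fiberwise identity $(\mathcal{V}_j^1)_b\cap(H\times\{0\})=\bigoplus_{k\in I(j)}(\mathcal{V}_k)_b\times\{0\}$ for every $b\in B$. The inclusion $\supseteq$ is immediate from part~(i). For $\subseteq$, decompose a vector $v$ of the left-hand side along the $\Phi$-Selgrade decomposition $H\times\{0\}=\bigoplus_k(\mathcal{V}_k)_b\times\{0\}$; each summand lies in $(\mathcal{V}_{j(k)}^1)_b$, and since $v\in(\mathcal{V}_j^1)_b$ and the $\Psi^1$-Selgrade sum $H^1=\bigoplus_{j'}(\mathcal{V}_{j'}^1)_b$ is direct, only the summands with $j(k)=j$ survive. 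Taking dimensions yields $\dim\big((\mathcal{V}_j^1)_b\cap(H\times\{0\})\big)=\sum_{k\in I(j)}\dim\mathcal{V}_k$. Consequently, inequality (\ref{dim4}) is equivalent to $\dim\mathcal{V}_j^1>\dim(\mathcal{V}_j^1\cap(B\times H^{1,0}))$, i.e.\ to $\mathcal{V}_j^1\not\subset B\times H^{1,0}$, equivalently to $\mathcal{M}_j^1\cap(B\times\mathbb{P}H^{1,1})\neq\varnothing$.

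The reverse implication is then easy: if (\ref{dim4}) holds, $\mathcal{M}_j^1$ contains a point of $B\times\mathbb{P}H^{1,1}$, which cannot lie in $\mathbb{P}(\mathcal{V}_i\times\{0\})\subset B\times\mathbb{P}H^{1,0}$, so the inclusion $\mathbb{P}(\mathcal{V}_i\times\{0\})\subset\mathcal{M}_j^1$ is proper. The forward implication carries the real difficulty, and I would prove its contrapositive: if (\ref{dim4}) fails, equality of dimensions together with $\bigoplus_{k\in I(j)}(\mathcal{V}_k\times\{0\})\subset\mathcal{V}_j^1$ forces $\mathcal{V}_j^1=\bigoplus_{k\in I(j)}(\mathcal{V}_k\times\{0\})\subset B\times H^{1,0}$, hence $\mathcal{M}_j^1\subset B\times\mathbb{P}H^{1,0}$. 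The main obstacle is to conclude that $I(j)$ is then a singleton, so that $\mathcal{M}_j^1=\mathbb{P}(\mathcal{V}_i\times\{0\})$ and the inclusion is not proper. The tempting worry is that the chains witnessing chain transitivity of $\mathcal{M}_j^1$ for the full flow might leave $B\times\mathbb{P}H^{1,0}$; the key is to bypass this by invoking the fact recalled in Subsection~\ref{Subsection2.1} that the flow restricted to a chain recurrent component is itself chain transitive. Thus $\mathbb{P}\Psi^1$ restricted to $\mathcal{M}_j^1$ is chain transitive \emph{within} $\mathcal{M}_j^1\subset B\times\mathbb{P}H^{1,0}$, and transporting this through the conjugacy $h^0$ shows that $(h^0)^{-1}(\mathcal{M}_j^1)=\mathbb{P}\bigoplus_{k\in I(j)}\mathcal{V}_k$ is chain transitive for $\mathbb{P}\Phi$. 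Since this set contains each chain recurrent component $\mathcal{M}_k=\mathbb{P}\mathcal{V}_k$, $k\in I(j)$, and chain recurrent components are maximal chain transitive sets, $I(j)$ must be a singleton, which completes the argument.
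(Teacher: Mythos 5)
Your proposal is correct and takes essentially the same route as the paper: part (i) is the paper's argument via Proposition \ref{Proposition_e}(i) verbatim, and in part (ii) the dimension count reducing (\ref{dim4}) to whether $\mathcal{M}_{j}^{1}\subset B\times\mathbb{P}H^{1,0}$, followed by the maximality argument using that the sets $\mathbb{P}\left(\mathcal{V}_{i}\times\{0\}\right)$ are the chain recurrent components of the restricted flow, is exactly what the paper compresses into its closing sentence (``This cannot occur since\ldots''), with your explicit appeal to chain transitivity of the flow restricted to $\mathcal{M}_{j}^{1}$ and transport through $h^{0}$ merely filling in the details the paper leaves implicit. Your fiberwise identity $(\mathcal{V}_{j}^{1})_{b}\cap\left(H\times\{0\}\right)=\bigoplus_{k\in I(j)}(\mathcal{V}_{k})_{b}\times\{0\}$ is a slightly stronger intermediate step than the paper's inequality $\dim\mathcal{V}_{j}^{1}\geq\sum_{k\in I(j)}\dim\mathcal{V}_{k}$, but it serves the same purpose and a version of it reappears anyway in the proof of Theorem \ref{Theorem_affine1}(ii).
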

	
	\begin{proof}
		(i) The Selgrade decomposition for $\Phi$ yields that the projections
		$\mathcal{M}_{i}=\mathbb{P}\mathcal{V}_{i}$ to $B\times\mathbb{P}H$ are the
		chain recurrent components of $\mathbb{P}\Phi$. By Proposition
		\ref{Proposition_e}(i) it follows that $h^{0}(\mathcal{M}_{i})=\mathbb{P}%
		\left(  \mathcal{V}_{i}\times\{0\}\right)  $ is a chain recurrent component of
		$\mathbb{P}\Psi^{1}$ restricted to $B\times\mathbb{P}H^{1,0}\subset
		B\times\mathbb{P}H^{1}$. Hence $\mathbb{P}\left(  \mathcal{V}_{i}%
		\times\{0\}\right)  $ is chain transitive for $\mathbb{P}\Psi^{1}$. Thus for
		every $i\in\{1,\ldots,\ell\}$ there is $j$ with $\mathbb{P}\left(
		\mathcal{V}_{i}\times\{0\}\right)  \subset\mathcal{M}_{j}^{1}$ and
		$\mathcal{V}_{i}\times\{0\}\subset\mathcal{V}_{j}^{1}$.
		
		(ii) The inequality $\dim\mathcal{V}_{j}^{1}\geq\sum\nolimits_{k\in I(j)}%
		\dim\left(  \mathcal{V}_{k}\times\{0\}\right)  $ holds, since the sum of the
		subbundles $\mathcal{V}_{k}\times\{0\},k\in I(j)$, is direct, and equality
		holds if and only if $\mathcal{V}_{j}^{1}=\bigoplus_{k\in I(j)}\left(
		\mathcal{V}_{k}\times\{0\}\right)  $.
		
		Suppose that $\mathcal{V}_{i}\times\{0\}$ is a proper subset of $\mathcal{V}%
		_{j}^{1}$. Since by Proposition \ref{Proposition_e}(i) the sets $\mathcal{M}%
		_{k}\times\{0\}$ are chain recurrent components of $\mathbb{P}\Psi^{1}$
		restricted to $B\times H^{1,0}$ it follows that there exists $(b,x,r)\in
		\mathcal{V}_{j}^{1}$ with $r\not =0$. Thus $\bigoplus\nolimits_{k\in
			I(j)}\left(  \mathcal{V}_{k}\times\{0\}\right)  \subset\mathcal{V}_{j}^{1}$ is
		a proper inclusion implying (\ref{dim4}). Conversely, suppose that
		(\ref{dim4}) holds. If $\left\vert I(j)\right\vert >1$ it follows trivially
		that $\mathcal{V}_{i}\times\{0\},i\in I(j)$, is a proper subset of
		$\mathcal{V}_{j(i)}^{1}$. If there is a single $\mathcal{V}_{k}\times
		\{0\}\subset\mathcal{V}_{j}^{1}$ the inequality $\dim\mathcal{V}_{j}^{1}%
		>\dim\mathcal{V}_{k}$ implies that the inclusion $\mathcal{V}_{k}%
		\times\{0\}\subset\mathcal{V}_{j}^{1}$ is proper.
	\end{proof}
	
	The following lemma contains basic information on the Selgrade bundles of
	$\Psi^{1}$.
	
	\begin{lemma}
		\label{Lemma_affine2}There exists a unique Selgrade bundle $\mathcal{V}%
		_{j}^{1}$ of $\Psi^{1}$ such that $\mathcal{V}_{j}^{1}\cap\left(  B\times
		H^{1,1}\right)  \not =\varnothing$. The dimension of $\mathcal{V}_{j}^{1}$ is
		given by
		\begin{equation}
			\dim\mathcal{V}_{j}^{1}=1+\sum\nolimits_{i}\dim\mathcal{V}_{i}, \label{dim0}%
		\end{equation}
		where the summation is over all $i\in\{1,\ldots,\ell\}$ such that
		$\mathcal{V}_{i}\times\{0\}\subset\mathcal{V}_{j}^{1}$. The other Selgrade
		bundles of $\Psi^{1}$ are the subbundles $\mathcal{V}_{i}\times\{0\}$ which
		are not contained in $\mathcal{V}_{j}^{1}$.
	\end{lemma}
	
	\begin{proof}
		Due to the decomposition (\ref{Sel_1}) there is at least one Selgrade bundle
		$\mathcal{V}_{j}^{1}$ with $\mathcal{V}_{j}^{1}\cap(B\times H^{1,1}%
		)\not =\varnothing$ or, equivalently, $\mathcal{M}_{j}^{1}\cap\left(
		B\times\mathbb{P}H^{1,1}\right)  \not =\varnothing$. By Lemma \ref{Lemma_dim1}%
		(i) the projections $\mathbb{P}\left(  \mathcal{V}_{i}\times\{0\}\right)
		,i\in\{1,\ldots,\ell\}$, are chain transitive for $\mathbb{P}\Psi^{1}$. Let
		$\mathcal{M}_{j}^{1},j\in J$, be the chain recurrent components of
		$\mathbb{P}\Psi^{1}$ with $\mathcal{M}_{j}^{1}\cap\left(  B\times
		\mathbb{P}H^{1,1}\right)  \not =\varnothing$ and containing some set
		$\mathbb{P}\left(  \mathcal{V}_{i}\times\{0\}\right)  $, and let $I$ be the
		set of all $i\in\{1,\ldots,\ell\}$ such that $\mathcal{V}_{i}\times\{0\}$ is
		contained in some $\mathcal{V}_{j}^{1},j\in J$.
		
		\textbf{Case 1:} Suppose that $J\not =\varnothing$. Certainly $\mathcal{V}%
		_{i}\times\{0\},i\in I$, is a proper subset of the Selgrade bundle
		$\mathcal{V}_{j}^{1}$ containing it. Applying Lemma \ref{Lemma_dim1}(ii) for
		every $j\in J$ one finds that
		\begin{equation}
			\sum\nolimits_{j\in J}\dim\mathcal{V}_{j}^{1}\geq\left\vert J\right\vert
			+\sum\nolimits_{i\in I}\dim\mathcal{V}_{i}. \label{3.5}%
		\end{equation}
		By Lemma \ref{Lemma_dim1}(i) also the sets $\mathbb{P}\left(  \mathcal{V}%
		_{i}\times\{0\}\right)  ,i\in\{1,\ldots,\ell\}\setminus I$, are contained in
		some chain recurrent component $\mathcal{M}_{j(i)}^{1}$ of $\mathbb{P}\Psi
		^{1}$. Using (\ref{3.5}) we get
		\begin{align}
			d+1  &  \geq\sum\nolimits_{j\in J}\dim\mathcal{V}_{j}^{1}+\sum\nolimits_{j\in
				\{1,\ldots,\ell^{1}\}\setminus J}\dim\mathcal{V}_{j}^{1}\nonumber\\
			&  \geq\left\vert J\right\vert +\sum\nolimits_{i\in I}\dim(\mathcal{V}%
			_{i}\times\{0\})+\sum\nolimits_{i\in\{1,\ldots,\ell\}\setminus I}\dim\left(
			\mathcal{V}_{i}\times\{0\}\right) \label{dim3}\\
			&  =\left\vert J\right\vert +\sum\nolimits_{i=1}^{\ell}\dim\mathcal{V}%
			_{i}=\left\vert J\right\vert +d.\nonumber
		\end{align}
		Since $\left\vert J\right\vert \geq1$ here equalities hold and $\left\vert
		J\right\vert =1$. In particular, there is a unique Selgrade bundle
		$\mathcal{V}_{j}^{1}$ containing some $\mathcal{V}_{i}\times\{0\}$ and these
		are the subbundles with index $i\in I$. Furthermore, one obtains%
		\begin{equation}
			\dim\mathcal{V}_{j}^{1}+\sum\nolimits_{j\in\{1,\ldots,\ell^{1}\}\setminus
				J}\dim\mathcal{V}_{j}^{1}=\dim\mathcal{V}_{j}^{1}+\sum\nolimits_{i\in
				\{1,\ldots,\ell\}\setminus I}\dim\left(  \mathcal{V}_{i}\times\{0\}\right)  .
			\label{dim5}%
		\end{equation}
		If there is $i\in\{1,\ldots,\ell\}\setminus I$ such that $\mathbb{P}\left(
		\mathcal{V}_{i}\times\{0\}\right)  $ is properly contained in a chain
		recurrent component $\mathcal{M}_{j(i)}^{1}$ with $j(i)\not \in J$, then Lemma
		\ref{Lemma_dim1}(ii) implies that $\dim\mathcal{V}_{j(i)}^{1}\geq1+\dim\left(
		\mathcal{V}_{i}\times\{0\}\right)  $. This yields a contradiction to
		(\ref{dim5}) and shows that $\mathcal{V}_{i}\times\{0\}$ is a Selgrade bundle
		for all $i\in\{1,\ldots,\ell\}\setminus I$.
		
		We conclude that the Selgrade bundles of $\Psi^{1}$ are given by
		$\mathcal{V}_{j}^{1}$ and the subbundles $\mathcal{V}_{i}\times\{0\}$ which
		are not contained in $\mathcal{V}_{j}^{1}$. This proves the assertion in case 1.
		
		\textbf{Case 2:} Suppose that $J=\varnothing$, i.e., the subbundles
		$\mathcal{V}_{j}^{1}$ with $\mathcal{M}_{j}^{1}\cap\left(  B\times
		\mathbb{P}H^{1,1}\right)  \not =\varnothing$ do not contain any $\mathcal{V}%
		_{i}\times\{0\}$. Now define $J_{1}$ as the set of indices with $\mathcal{M}%
		_{j}^{1}\cap\left(  B\times\mathbb{P}H^{1,1}\right)  \not =\varnothing$ and
		note that $\left\vert J_{1}\right\vert \geq1$. Since $\mathcal{V}_{j}^{1}%
		\cap\left(  \mathcal{V}_{i}\times\{0\}\right)  =B\times\{(0,0)\}$ for all
		$j\in J_{1}$ and all $i\in\{1,\ldots,\ell\}$ Lemma \ref{Lemma_dim1}(i) implies
		that
		\[
		d+1\geq\sum\nolimits_{j\in J_{1}}\dim\mathcal{V}_{j}^{1}+\sum\nolimits_{i=1}%
		^{\ell}\dim\mathcal{V}_{i}=\sum\nolimits_{j\in J_{1}}\dim\mathcal{V}_{j}%
		^{1}+d.
		\]
		It follows that equality holds here and $\left\vert J_{1}\right\vert =1$, thus
		there is a unique Selgrade bundle $\mathcal{V}_{j}^{1}$ with $\mathcal{M}%
		_{j}^{1}\cap\left(  B\times\mathbb{P}H^{1,1}\right)  \not =\varnothing$ and
		$\dim V_{j}^{1}=1$. By Lemma \ref{Lemma_dim1}(i) every set $\mathbb{P}\left(
		\mathcal{V}_{i}\times\{0\}\right)  $ is contained in some chain recurrent
		component $\mathcal{M}_{j(i)}^{1}$ of $\mathbb{P}\Psi^{1}$. Let $J_{2}$ be the
		set of all Selgrade bundles containing some $\mathcal{V}_{i}\times\{0\}$. If
		there is a subbundle $\mathcal{V}_{i}\times\{0\}$ which is a proper subset of
		$\mathcal{V}_{j(i)}^{1}$ Lemma \ref{Lemma_dim1}(ii) implies the contradiction%
		\[
		d+1\geq1+\sum\nolimits_{j\in J_{2}}\dim\mathcal{V}_{j}^{1}>1+\sum
		\nolimits_{i\in\{1,\ldots,\ell\}}\dim\mathcal{V}_{i}=1+d.
		\]
		We conclude that, in addition to $\mathcal{V}_{j}^{1}$, all subbundles
		$\mathcal{V}_{i}\times\{0\},i\in\{1,\ldots,\ell\}$, are Selgrade bundles of
		$\Psi^{1}$. This proves the assertion in case 2.
	\end{proof}
	
	Proposition \ref{Proposition_e}(iv) shows that $B\times\mathbb{P}H^{1,0}$ may
	be interpreted as a representation of $B\times H$ at infinity. This motivates
	us to call subbundle at infinity any subbundle of the form $\mathcal{V}%
	_{i}^{\infty}:=\mathcal{V}_{i}\times\{0\}\subset B\times H^{1},i\in
	\{1,\ldots,\ell\}$, since the projection $\mathbb{P}\left(  \mathcal{V}%
	_{i}\times\{0\}\right)  $ is contained in $B\times\mathbb{P}H^{1,0}$.
	
	The following theorem describes the Selgrade decomposition of the lifted flow
	$\Psi^{1}$. There is a unique Selgrade bundle for $\Psi^{1}$ which is not at
	infinity. We will call it the central Selgrade bundle and denote it by
	$\mathcal{V}_{c}^{1}$ (cf. also its spectral properties in Theorem
	\ref{Theorem_spectrum}).
	
	\begin{theorem}
		\label{Theorem_affine1}Consider an affine flow $\Psi$ on a vector bundle
		$B\times H$.
		
		(i) The Selgrade decomposition of the lifted flow $\Psi^{1}$ defined in
		Proposition \ref{Proposition_lift} is given by
		\begin{equation}
			B\times H^{1}=\mathcal{V}_{1}^{\infty}\oplus\cdots\oplus\mathcal{V}_{\ell^{+}%
			}^{\infty}\oplus\mathcal{V}_{c}^{1}\oplus\mathcal{V}_{\ell^{+}+\ell^{0}%
				+1}^{\infty}\oplus\cdots\oplus\mathcal{V}_{\ell}^{\infty}, \label{Sel1}%
		\end{equation}
		for some numbers $\ell^{+},\ell^{0}\geq0$ with $\ell^{+}+\ell^{0}\leq\ell$,
		and the central Selgrade bundle $\mathcal{V}_{c}^{1}$ is the unique Selgrade
		bundle having nonvoid intersection with $B\times H^{1,1}$.
		
		(ii) The intersection of the central Selgrade subbundle $\mathcal{V}_{c}^{1}$
		with the subbundle $B\times H^{1,0}$ is%
		\[
		\mathcal{V}_{c}^{1}\cap\left(  B\times H^{1,0}\right)  =\bigoplus_{i=\ell
			^{+}+1}^{i=\ell^{+}+\ell^{0}}\mathcal{V}_{i}^{\infty}=:\mathcal{V}_{c}%
		^{\infty}.
		\]

		(iii) The dimension of $\mathcal{V}_{c}^{1}$ is given by $\dim\mathcal{V}%
		_{c}^{1}=1+\dim\mathcal{V}_{c}^{\infty}$, and $\dim\mathcal{V}_{c}^{1}=1$
		holds if and only if $\mathcal{V}_{c}^{1}\cap\left(  B\times H^{1,0}\right)
		=B\times\{(0_{H},0)\}$.
		
		(iv) If $h^{1}(\mathcal{V}_{i})$ is chain transitive on the projective
		Poincar\'{e} bundle $B\times\mathbb{P}H^{1}$, then $\mathcal{V}_{i}^{\infty
		}\subset\mathcal{V}_{c}^{1}$.
	\end{theorem}
	
	\begin{proof}
		Theorem \ref{Theorem_Selgrade} applied to the linear flow $\Psi^{1}$ yields
		the Selgrade decomposition (\ref{Sel_1}) of $B\times H^{1}$. By Lemma
		\ref{Lemma_affine2} there is a unique Selgrade bundle $\mathcal{V}_{j}^{1}$
		with $\mathcal{V}_{j}^{1}\cap\left(  B\times H^{1,1}\right)  \not =%
		\varnothing$ and the other Selgrade bundles have the form $\mathcal{V}%
		_{i}\times\{0\}$. We write $\mathcal{V}_{c}^{1}:=\mathcal{V}_{j}^{1}$. Let
		$\ell^{0}$ the number of subbundles $\mathcal{V}_{i}\times\{0\}$ contained in
		$\mathcal{V}_{c}^{1}$. Since the chain recurrent components for the Selgrade
		bundles are linearly ordered, we can define $\ell^{+}\geq0$ such that the
		Selgrade decomposition has the form (\ref{Sel1}).
		
		The definitions imply that $\bigoplus_{i=\ell^{+}+1}^{i=\ell^{+}+\ell^{0}%
		}\mathcal{V}_{i}^{\infty}\subset\mathcal{V}_{c}^{1}$. Thus the assertion in
		(ii) follows from (\ref{dim0}), which in the present notation yields
		\[
		\dim\mathcal{V}_{c}^{1}=1+\sum_{i=\ell^{+}+1}^{i=\ell^{+}+\ell^{0}}%
		\dim\mathcal{V}_{i}^{\infty}.
		\]
		This also implies assertion (iii). In order to prove assertion (iv), suppose
		that $h^{1}(\mathcal{V}_{i})=\mathbb{P}(\mathcal{V}_{i}\times\{1\})$ is chain
		transitive. It follows that $\mathbb{P}(\mathcal{V}_{i}\times\{1\}$ is
		contained in the chain recurrent component $\mathcal{M}_{c}^{1}$, since the
		other chain recurrent components are $\mathbb{P}\mathcal{V}_{i}^{\infty}$,
		which are subsets of $B\times\mathbb{P}H^{1,0}$. For $(b,x)\in\mathcal{V}_{i}$
		and $n\in\mathbb{N}$ the sequence
		\[
		\mathbb{P}(b,x,\frac{1}{n})=\mathbb{P}(b,nx,1)\in\mathbb{P}\left(
		\mathcal{V}_{i}\times\{1\}\right)  \subset\mathcal{M}_{c}^{1}%
		\]
		converges for $n\rightarrow\infty$ to $\mathbb{P}(b,x,0)\in\mathbb{P}\left(
		\mathcal{V}_{i}\times\{0\}\right)  $, hence $\mathcal{V}_{i}^{\infty}%
		\subset\mathcal{V}_{c}^{1}$.
	\end{proof}
	
	\begin{remark}
		If there is an equilibrium $e\in B$ of $\theta$, i.e., $\theta_{t}%
		e=e,t\in\mathbb{R}$, with $f(e)=0\in L^{\infty}(\mathbb{R},H)$, it follows
		that the north pole $(e,0_{H},1)$ of the Poincar\'{e} sphere $\{e\}\times
		\mathbb{S}H^{1}$ is in $\mathcal{V}_{c}^{1}$. This holds since $(e,0_{H},1)$
		is an equilibrium of $\Psi^{1}$ implying $(e,\mathbb{P}\left(  0_{H},1\right)
		)\in\mathcal{M}_{c}^{1}$.
	\end{remark}
	
	Next we relate chain recurrence properties of the affine flow $\Psi$ on
	$B\times H$ and the flow $\mathbb{P}\Psi^{1}$ on the projective Poincar\'{e}
	bundle. Observe that the map $\left(  h^{1}\right)  ^{-1}$ may not preserve
	chain transitivity, since this is a homeomorphism between the non-compact
	spaces $B\times\mathbb{P}H^{1,1}$ and $B\times H$.
	
	\begin{corollary}
		\label{Corollary3.8}Consider an affine flow $\Psi$ on $B\times H$ with central
		Selgrade bundle $\mathcal{V}_{c}^{1}$ in $B\times H^{1}$.
		
		(i) If $(b,x)\in B\times H$ is chain recurrent for $\Psi$, then $h^{1}
		(b,x)\in\mathcal{M}_{c}^{1}$.
		
		(ii) The inclusion $\mathcal{M}_{c}^{1}\subset B\times\mathbb{P}H^{1,1}$ holds
		if and only if
		\[
		\mathcal{N}_{c}:=(h^{1})^{-1}\left(  \mathcal{M}_{c}^{1}\cap\left(
		B\times\mathbb{P}H^{1,1}\right)  \right)
		\]
		is compact. In this case $\mathcal{N}_{c}=\left(  h^{1}\right)  ^{-1}%
		(\mathcal{M}_{c}^{1})\ $is the chain recurrent set of $\Psi$.
	\end{corollary}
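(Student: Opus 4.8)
The plan is to transport chain recurrence back and forth along the conjugacy $h^{1}$ of Proposition \ref{Proposition_e}(ii), using the one-sided estimate of Proposition \ref{Proposition_e}(iii) in one direction, compactness to control the other direction, and Theorem \ref{Theorem_affine1} to single out the relevant chain recurrent component. The central dichotomy to exploit is that the open stratum $B\times\mathbb{P}H^{1,1}$ is met only by $\mathcal{M}_{c}^{1}$, since every bundle at infinity projects into the closed stratum $B\times\mathbb{P}H^{1,0}$.

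For (i), I would transfer chain recurrence from $\Psi$ to $\mathbb{P}\Psi^{1}$. If $(b,x)\in\Omega(b,x)$ for $\Psi$, then for every $\varepsilon,T>0$ there is an $(\varepsilon,T)$-chain in $B\times H$ from $(b,x)$ to itself; by Proposition \ref{Proposition_e}(iii) its image under $h^{1}$ is a $(\delta(\varepsilon),T)$-chain in $B\times\mathbb{P}H^{1,1}$ from $h^{1}(b,x)$ to itself, and since $\delta(\varepsilon)\to 0$ this shows $h^{1}(b,x)$ is chain recurrent for $\mathbb{P}\Psi^{1}$. A chain recurrent point lies in some chain recurrent component, and by Theorem \ref{Theorem_affine1} the only one meeting $B\times\mathbb{P}H^{1,1}$ is $\mathcal{M}_{c}^{1}$; since $h^{1}(b,x)\in B\times\mathbb{P}H^{1,1}$, necessarily $h^{1}(b,x)\in\mathcal{M}_{c}^{1}$.

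For the equivalence in (ii), the direction $\mathcal{M}_{c}^{1}\subset B\times\mathbb{P}H^{1,1}\Rightarrow\mathcal{N}_{c}$ compact is immediate, since then $\mathcal{N}_{c}=(h^{1})^{-1}(\mathcal{M}_{c}^{1})$ is the image of the compact set $\mathcal{M}_{c}^{1}$ under the continuous map $(h^{1})^{-1}$ defined on $B\times\mathbb{P}H^{1,1}$. For the converse I would use Proposition \ref{Proposition_e}(iv): compactness of $\mathcal{N}_{c}$ forces its fibre-projection to be bounded, whence $\overline{h^{1}(\mathcal{N}_{c})}=\overline{\mathcal{M}_{c}^{1}\cap(B\times\mathbb{P}H^{1,1})}$ is disjoint from $B\times\mathbb{P}H^{1,0}$. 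As $\mathcal{M}_{c}^{1}$ is closed, this closure lies in $\mathcal{M}_{c}^{1}\cap(B\times\mathbb{P}H^{1,1})$, so that set is closed as well as relatively open in $\mathcal{M}_{c}^{1}$; since a chain recurrent component is connected and $\mathcal{M}_{c}^{1}\cap(B\times\mathbb{P}H^{1,1})\neq\varnothing$ by Theorem \ref{Theorem_affine1}(ii), this clopen set is all of $\mathcal{M}_{c}^{1}$, i.e. $\mathcal{M}_{c}^{1}\subset B\times\mathbb{P}H^{1,1}$.

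Finally, to identify $\mathcal{N}_{c}=(h^{1})^{-1}(\mathcal{M}_{c}^{1})$ with the chain recurrent set under the standing assumption $\mathcal{M}_{c}^{1}\subset B\times\mathbb{P}H^{1,1}$, one inclusion is (i). For the reverse inclusion I would restrict $h^{1}$ to $\mathcal{N}_{c}$: since $\mathcal{M}_{c}^{1}$ is now a compact invariant subset of $B\times\mathbb{P}H^{1,1}$, the set $\mathcal{N}_{c}$ is compact and invariant and $h^{1}|_{\mathcal{N}_{c}}$ is a conjugacy of $\Psi|_{\mathcal{N}_{c}}$ with $\mathbb{P}\Psi^{1}|_{\mathcal{M}_{c}^{1}}$ between compact metric spaces. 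On compact spaces a conjugacy is uniformly continuous both ways and hence preserves chain transitivity; as $\mathbb{P}\Psi^{1}$ restricted to the chain recurrent component $\mathcal{M}_{c}^{1}$ is chain transitive, so is $\Psi|_{\mathcal{N}_{c}}$, and every point of $\mathcal{N}_{c}$ is chain recurrent for $\Psi$ (chains inside $\mathcal{N}_{c}$ are a fortiori chains in $B\times H$). The main obstacle is precisely this last step: the remark preceding the corollary warns that $(h^{1})^{-1}$ does not preserve chain transitivity in general, so it is essential that compactness of $\mathcal{N}_{c}$, equivalently $\mathcal{M}_{c}^{1}\subset B\times\mathbb{P}H^{1,1}$, upgrades $h^{1}|_{\mathcal{N}_{c}}$ to a conjugacy of flows on compact spaces, restoring the uniform control needed to push chains back from the Poincar\'{e} bundle to $B\times H$.
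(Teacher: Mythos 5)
Your proposal is correct and follows essentially the same route as the paper: part (i) and the forward direction of (ii) are verbatim the paper's argument, and the identification of $\mathcal{N}_{c}$ with the chain recurrent set uses the same compact-conjugacy observation (uniform continuity of $h^{1}$ and its inverse on compact sets) that the paper's proof compresses into one sentence. The only cosmetic deviation is in the converse direction of (ii), where you invoke Proposition \ref{Proposition_e}(iv) to make $\mathcal{M}_{c}^{1}\cap\left(B\times\mathbb{P}H^{1,1}\right)$ clopen in the connected set $\mathcal{M}_{c}^{1}$, while the paper builds explicit disjoint $\varepsilon$-neighborhoods of $h^{1}(\mathcal{N}_{c})$ and of $B\times\mathbb{P}H^{1,0}$ — the same connectedness-based separation with different bookkeeping.
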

	
	\begin{proof}
		(i) By Proposition \ref{Proposition_e}(iii) any chain recurrent point $(b,x)$
		of $\Psi$ is mapped to a chain recurrent point $h^{1}(b,x)$ of $\mathbb{P}%
		\Psi^{1}$. Since $\mathcal{M}_{c}^{1}$ is the only chain recurrent component
		of $\mathbb{P}\Psi^{1}$ intersecting $B\times\mathbb{P}H^{1,1}$, it follows
		that $h^{1}(b,x)\in\mathcal{M}_{c}^{1}$.
		
		(ii) Let $\mathcal{M}_{c}^{1}\subset B\times\mathbb{P}H^{1,1}$. Since the flow
		$\mathbb{P}\Psi^{1}$ restricted to the compact connected chain recurrent set
		$\mathcal{M}_{c}^{1}$ is chain transitive, it follows that also $\mathcal{N}%
		_{c}$ is compact, connected, and chain transitive, and by (i) $\mathcal{N}%
		_{c}$ is the chain recurrent set. Conversely, if $\mathcal{N}_{c}$ is compact,
		also $h^{1}(\mathcal{N}_{c})=\mathcal{M}_{c}^{1}\cap\left(  B\times
		\mathbb{P}H^{1,1}\right)  $ is compact. Define neighborhoods of $h^{1}%
		(\mathcal{N}_{c})$ and $B\times\mathbb{P}H^{1,0}$ in $B\times\mathbb{P}H^{1}$
		by
		\[%
		\begin{array}
			[c]{l}%
			N_{1}(\varepsilon)=\left\{  \mathbb{P}(b,x,1)\left\vert \exists\mathbb{P}%
			(b^{\prime},x^{\prime},1)\in h^{1}(\mathcal{N}_{c}):d(\mathbb{P}%
			(b,x,1),\mathbb{P}(b^{\prime},x^{\prime},1))<\varepsilon\right.  \right\}  ,\\
			N_{2}(\varepsilon)=\left\{  \mathbb{P}(b,x,1)\left\vert \exists\mathbb{P}%
			(b^{\prime},x^{\prime},0)\in B\times\mathbb{P}H^{1,0}:d(\mathbb{P}%
			(b,x,1),\mathbb{P}(b^{\prime},x^{\prime},0))<\varepsilon\right.  \right\}  ,
		\end{array}
		\]
		respectively. The sets $B\times\mathbb{P}H^{1,0}$ and $h^{1}(\mathcal{N}_{c})$
		are disjoint compact sets, hence there is $\varepsilon>0$ such that
		$N_{1}(\varepsilon)\cap N_{2}(\varepsilon)=\varnothing$. Since the connected
		set $\mathcal{M}_{c}^{1}$ is contained in the union of the disjoint open sets
		$N_{1}(\varepsilon)$ and $N_{2}(\varepsilon)$, it follows that $\mathcal{M}%
		_{c}^{1}\cap N_{2}(\varepsilon)=\varnothing$, hence $\mathcal{M}_{c}%
		^{1}\subset B\times\mathbb{P}H^{1,1}$.
	\end{proof}
	
	\begin{remark}
		Although $\mathcal{N}_{c}$ is always nonvoid, the trivial example $\dot{x}=1$
		shows that $\Psi$ may have no chain recurrent point. Note that $\mathcal{M}%
		_{c}^{1}\subset B\times\mathbb{P}H^{1,1}$ is equivalent to $\mathcal{V}%
		_{c}^{1}\cap(B\times H^{1,0})=B\times\{\left(  0_{H},0\right)  \}$.
	\end{remark}
	
	Next we discuss the Morse spectrum of the Selgrade bundles; cf. Subsection
	\ref{Subsection2.4}.
	
	\begin{theorem}
		\label{Theorem_spectrum}(i) For an affine flow $\Psi$ with linear part $\Phi$
		the Morse spectrum of the central Selgrade bundle $\mathcal{V}_{c}^{1}$
		satisfies $\Sigma_{Mo}(\mathcal{V}_{i};\Phi)\subset\Sigma_{Mo}(\mathcal{V}
		_{c}^{1};\Psi^{1})$ for every $i\in\left\{  \ell^{+}+1,\ldots,\ell^{+}
		+\ell^{0}\right\}  $.
		
		(ii) If the flow $\Psi$ has a periodic trajectory, then $\mathcal{V}_{c}^{1}$
		is the unique Selgrade bundle containing the lift $\Psi_{t}^{1}(b,x,1),t\in
		\mathbb{R}$, of any periodic trajectory of $\Psi$, and
		\[
		\mathrm{co}\left\{  \{0\}\cup\bigcup\nolimits_{i=\ell^{+}+1}^{\ell^{+}%
			+\ell^{0}}\Sigma_{Mo}(\mathcal{V}_{i};\Phi)\right\}  \subset\Sigma
		_{Mo}(\mathcal{V}_{c}^{1};\Psi^{1}).
		\]

		(iii) For all $i\in\left\{  1,\ldots,\ell\right\}  $ the Morse spectra of the
		Selgrade bundles at infinity satisfy%
		\[
		\Sigma_{Mo}(\mathcal{V}_{i}^{\infty};\Psi^{1})=\Sigma_{Mo}(\mathcal{V}%
		_{i};\Phi).
		\]
		
	\end{theorem}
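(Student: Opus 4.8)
The plan is to handle the three parts in turn, leaning on the conjugacies of Proposition \ref{Proposition_e} and the structure of the Selgrade decomposition in Theorem \ref{Theorem_affine1}.

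For part (i) I would transport chains through the conjugacy $h^0$ between $\mathbb{P}\Phi$ and the restriction of $\mathbb{P}\Psi^1$ to $B\times\mathbb{P}H^{1,0}$. The crucial point is that for $(b,x,0)\in\mathcal{V}_i^\infty$ one has $\Psi_t^1(b,x,0)=(\Phi_t(b,x),0)$, so with the product norm on $H^1$ the identities $\|\Psi^1(T,b,x,0)\|=\|\Phi(T,b,x)\|$ and $\|(x,0)\|=\|x\|$ hold. Consequently, given $\lambda\in\Sigma_{Mo}(\mathcal{V}_i;\Phi)$ and a defining sequence of $(\varepsilon^k,T^k)$-chains $\zeta^k$ in $\mathcal{M}_i$ with $\lambda(\zeta^k)\to\lambda$, the images $h^0(\zeta^k)$ are chains of $\mathbb{P}\Psi^1$ whose points lie in $h^0(\mathcal{M}_i)=\mathbb{P}(\mathcal{V}_i\times\{0\})$, and the defining formula yields the same value, $\lambda(h^0(\zeta^k))=\lambda(\zeta^k)$. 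Since $h^0$ and its inverse are uniformly continuous on the compact projective bundles, $h^0(\zeta^k)$ is a $(\tilde\varepsilon^k,T^k)$-chain with $\tilde\varepsilon^k\to 0$. For $i\in\{\ell^++1,\ldots,\ell^++\ell^0\}$ we have $\mathcal{V}_i^\infty\subset\mathcal{V}_c^1$, hence $h^0(\mathcal{M}_i)\subset\mathcal{M}_c^1$, so these are chains in $\mathcal{M}_c^1$; passing to the limit gives $\lambda\in\Sigma_{Mo}(\mathcal{V}_c^1;\Psi^1)$.

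For part (ii), a periodic trajectory $t\mapsto\Psi_t(b,x)$ of period $T_0$ lifts, keeping the last coordinate equal to $1$, to the periodic trajectory $\Psi_t^1(b,x,1)=(\theta_t b,\psi(t,b,x),1)$, whose projection is a periodic orbit of $\mathbb{P}\Psi^1$ in $B\times\mathbb{P}H^{1,1}$. A periodic orbit is chain transitive, and since $\mathcal{M}_c^1$ is the unique chain recurrent component of $\mathbb{P}\Psi^1$ meeting $B\times\mathbb{P}H^{1,1}$ by Theorem \ref{Theorem_affine1}, this orbit lies in $\mathcal{M}_c^1$ and hence the lift lies in $\mathcal{V}_c^1$; no Selgrade bundle at infinity can contain it, giving uniqueness. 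To place $0$ in the spectrum, note that $\|(\psi(t,b,x),1)\|$ is periodic, hence bounded away from $0$ and $\infty$; the single-segment chain that follows the orbit for $k$ periods (total time $kT_0$) returns exactly to its starting point, so its growth rate is precisely $0$, and taking $\varepsilon^k\to 0$, $T^k=kT_0\to\infty$ shows $0\in\Sigma_{Mo}(\mathcal{V}_c^1;\Psi^1)$. Combining with part (i), $\{0\}\cup\bigcup_{i=\ell^++1}^{\ell^++\ell^0}\Sigma_{Mo}(\mathcal{V}_i;\Phi)\subset\Sigma_{Mo}(\mathcal{V}_c^1;\Psi^1)$; since the latter is a compact interval by Theorem \ref{Theorem_Morse}, it is convex and therefore contains the convex hull.

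Part (iii) is then immediate: a chain transitive set $h^1(C)$ consists of mutually chain-equivalent points, so it lies in a single chain recurrent component of $\mathbb{P}\Psi^1$, and as $h^1(C)\subset B\times\mathbb{P}H^{1,1}$ that component must be the unique one meeting this region, namely $\mathcal{M}_c^1$. The main obstacle I anticipate is the bookkeeping in part (i): checking that the growth-rate formula is genuinely preserved by $h^0$ (which rests on the product norm restricting to the $H$-norm on the slice $H\times\{0\}$) and that the chain parameters $(\varepsilon,T)$ transfer correctly under the conjugacy of compact flows; the only other delicate point is ensuring the $0$-chain in part (ii) really lies in $\mathcal{M}_c^1$, which holds once the periodic orbit has been located there.
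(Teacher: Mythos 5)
Your proposal is correct and follows essentially the same route as the paper's proof: part (i) by transporting chains from $\mathcal{M}_{i}$ into $\mathbb{P}\left(\mathcal{V}_{i}\times\{0\}\right)\subset\mathcal{M}_{c}^{1}$ with growth rates preserved via $\left\Vert \Psi^{1}(t,b,x,0)\right\Vert =\left\Vert (\Phi(t,b,x),0)\right\Vert$, part (ii) by locating the lifted periodic orbit in $\mathcal{M}_{c}^{1}$ through the uniqueness in Theorem \ref{Theorem_affine1} and using jump-free chains over whole periods with growth rate exactly $0$ together with the interval property of Theorem \ref{Theorem_Morse}, and part (iii) by uniqueness of the chain recurrent component meeting $B\times\mathbb{P}H^{1,1}$. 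The only cosmetic difference is in (i): where you invoke uniform continuity of $h^{0}$ to obtain parameters $\tilde{\varepsilon}^{k}\rightarrow 0$, the paper observes that the metric (\ref{metric_P}) satisfies $d(\mathbb{P}(x,0),\mathbb{P}(y,0))=d(\mathbb{P}x,\mathbb{P}y)$, so $h^{0}$ is an isometry and the chains transfer with the same $\varepsilon$.
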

	
	\begin{proof}
		(i) According to Theorem \ref{Theorem_affine1} $\mathcal{V}_{i}\times
		\{0\}\subset\mathcal{V}_{c}^{1}$ for all $i\in\{\ell^{+}+1,\ldots,\ell
		^{+}+\ell^{0}\}$. Thus for all $\varepsilon,T>0$ any $(\varepsilon,T)$-chain
		$\zeta$ with $(b_{0},\mathbb{P}x_{0}),\ldots,(b_{n},\mathbb{P}x_{n})$ for
		$\mathbb{P}\Phi$ in $\mathbb{P}\mathcal{V}_{i}$ yields an $(\varepsilon
		,T)$-chain $\zeta^{1}$ for $\mathbb{P}\Psi^{1}$ in $\mathbb{P}\left(
		\mathcal{V}_{i}\times\{0\}\right)  \subset\mathbb{P}\mathcal{V}_{c}^{1}$ with
		$(b_{0},\mathbb{P}(x_{0},0)),\ldots,\allowbreak(b_{n},\mathbb{P}(x_{n},0))$.
		This follows since, by the definition of the distance in $\mathbb{P}H^{1}$ and
		$\mathbb{P}H$ in (\ref{metric_P}),
		\begin{align*}
			d(\mathbb{P}(x,0),\mathbb{P}(y,0))  &  =\min\left\{  \left\Vert \frac
			{(x,0)}{\left\Vert (x,0)\right\Vert }-\frac{(y,0)}{\left\Vert (y,0)\right\Vert
			}\right\Vert ,\left\Vert \frac{(x,0)}{\left\Vert (x,0)\right\Vert }%
			+\frac{(y,0)}{\left\Vert (y,0)\right\Vert }\right\Vert \right\} \\
			&  =\min\left\{  \left\Vert \frac{x}{\left\Vert x\right\Vert }-\frac
			{y}{\left\Vert y\right\Vert }\right\Vert ,\left\Vert \frac{x}{\left\Vert
				x\right\Vert }+\frac{y}{\left\Vert y\right\Vert }\right\Vert \right\}
			=d(\mathbb{P}x,\mathbb{P}y).
		\end{align*}
		The definition of $\Psi^{1}$ shows that $\Psi^{1}(t,b,x,0)=(\Phi(t,b,x),0)$
		for all $(t,b,x)\in\mathbb{R}\times B\times H$. Hence, with total time
		$\tau=\sum_{i=0}^{n-1}T_{i}$, the exponential growth rates of $\zeta^{1}$ and
		$\zeta$ are
		\begin{align*}
			\lambda(\zeta^{1})  &  =\frac{1}{\tau}\sum_{i=0}^{n-1}\left(  \log\left\Vert
			\Psi^{1}(T_{i},b_{i},x_{i},0)\right\Vert -\log\left\Vert (b_{i},x_{i}%
			,0)\right\Vert \right) \\
			&  =\frac{1}{\tau}\sum_{i=0}^{n-1}\left(  \log\left\Vert (\Phi(T_{i}%
			,b_{i},x_{i}),0)\right\Vert -\log\left\Vert (b_{i},x_{i},0)\right\Vert
			\right)  =\lambda(\zeta).
		\end{align*}
		This implies $\Sigma_{Mo}(\mathcal{V}_{i};\Phi)\subset\Sigma_{Mo}%
		(\mathcal{V}_{c}^{1};\Psi^{1})$ for $\mathcal{V}_{i}\times\{0\}\subset
		\mathcal{V}_{c}^{1}$.
		
		(ii) Suppose that the flow $\Psi$ has a periodic solution satisfying
		$\Psi_{\tau}(b,x)\allowbreak=(b,x)$ for some $\tau>0$. This yields a periodic
		solution of $\Psi^{1}$ given by $\Psi_{\tau}^{1}(b,x,1)\allowbreak=(b,x,1)\in
		B\times H^{1,1}$ implying that $\mathbb{P}(b,x,1)\in B\times\mathbb{P}H^{1,1}$
		is in a chain recurrent component of $\mathbb{P}\Psi^{1}$ and by Theorem
		\ref{Theorem_affine1} $\mathbb{P}(b,x,1)\in\mathcal{M}_{c}^{1}$. Thus the
		central Selgrade bundle of $\Psi^{1}$ is the Selgrade bundle containing the
		lift of any periodic trajectory of $\Psi$. The $\tau$-periodic trajectory of
		$\Psi^{1}$ yields $(\varepsilon,T)$-chains $\zeta^{k}$ (without jumps) with
		exponential growth rates $\lambda(\zeta^{k})=0$: Define for any $k\in
		\mathbb{N}$ the chain $\zeta^{k}$ with $n=1$ by%
		\[
		T_{0}=k\tau,(b_{0},x_{0},1)=(b_{1},x_{1},1)=(b,x,1).
		\]
		Then $\left\Vert \Psi^{1}(T_{0},b_{0},x_{0},1)\right\Vert =\left\Vert \Psi
		^{1}(k\tau,b,x,1)\right\Vert =\left\Vert (b,x,1)\right\Vert $ and
		$\lambda(\zeta^{k})=0$. The assertion on the convex hull follows, since by
		Theorem \ref{Theorem_Morse} the Morse spectrum of a Selgrade bundle is an interval.
		
		(ii) By Proposition \ref{Proposition_e}(i) the flows $\mathbb{P}\Phi$ on
		$B\times\mathbb{P}H$ and $\mathbb{P}\Psi^{1}$ restricted to $B\times
		\mathbb{P}H^{1,0}$ are conjugate. Thus the $(\varepsilon,T)$-chains in
		$B\times\mathbb{P}H$ correspond to $(\varepsilon^{\prime},T)$-chains in
		$B\times\mathbb{P}H^{1,0}$ with $\varepsilon\rightarrow0$ if and only if
		$\varepsilon^{\prime}\rightarrow0,$ and also the exponential growth rates of
		the corresponding chains coincide.
	\end{proof}
	
	\section{Split affine flows\label{Section4}}
	
	In this section we determine the central Selgrade bundle for a class of affine
	flows, which can be split into a linear, homogeneous part and an inhomogeneous part.
	
	We consider the following class of affine flows. The base space of the vector
	bundle is the product $B_{1}\times B_{2}$ of compact metric spaces $B_{1}$ and
	$B_{2}$. We suppose that chain transitive flows $\theta^{1}$ on $B_{1}$ and
	$\theta^{2}$ on $B_{2}$ are given. It follows from Theorem
	\ref{Theorem_chains} that this is equivalent to chain transitivity of the
	product flow $\theta_{t}(b^{1},b^{2})=(\theta_{t}^{1}b^{1},\theta_{t}^{2}
	b^{2}),t\in\mathbb{R}$, on $B_{1}\times B_{2}$. Furthermore, we suppose that
	there is an equilibrium of $\theta^{1}$ denoted by $e^{1}\in B_{1}$, hence
	$\theta_{t}^{1}e^{1}=e^{1},t\in\mathbb{R}$.
	
	\begin{definition}
		A split affine flow is an affine flow $\Psi$ on a vector bundle $\left(
		B_{1}\times B_{2}\right)  \times H$ of the form
		\[
		\Psi_{t}(b^{1},b^{2},x)=\left(  \theta_{t}^{1}b^{1},\Phi_{t}(b^{2},x)+\int
		_{0}^{t}\left(  \theta_{t}^{1}b^{1},\Phi_{t-s}(\theta_{s}^{2}b^{2}%
		,f(b^{1},s))\right)  ds\right)  ,
		\]
		where $\Phi$ is a linear flow on $B_{2}\times H$ and $f:B_{1}\rightarrow
		L^{\infty}(\mathbb{R},H),f(b^{1},s):=f(b^{1})(s),s\in\mathbb{R}$, satisfies
		\[
		f(e^{1})=0\mbox{ and }f(b^{1},t+s)=f(\theta_{s}^{1}b^{1},t)\mbox{ for all
		}b^{1}\in B_{1}\mbox{ and almost all }t,s\in\mathbb{R}.
		\]
		
	\end{definition}
	
	Note that the base flow on $B_{1}\times B_{2}$ of $\Psi$ is $\theta$, and
	\[
	\Psi_{t}(e^{1},b^{2},x)=(e^{1},\Phi_{t}(b^{2},x)),t\in\mathbb{R}%
	,\mbox{ for all }b^{2}\in B_{2},x\in H.
	\]
	In a trivial way, every linear flow may be viewed as a split affine flow:
	Define $B_{1}:=\{e^{1}\}$ and $f(e^{1})=0\in L^{\infty}(\mathbb{R}%
	,\mathbb{R})$. Linear control systems and, more generally, split affine
	control systems define split affine control flows; cf. Section \ref{Section6}.
	
	\begin{lemma}
		\label{Lemma_A}The linear part of $\Psi$ is the flow $\widetilde{\Phi}%
		_{t}(b^{1},b^{2},x)=\left(  \theta_{t}^{1}b^{1},\Phi_{t}(b^{2},x)\right)
		,\allowbreak t\in\mathbb{R}$, on $B_{1}\times B_{2}\times H$, and the Selgrade
		bundles of $\widetilde{\Phi}$ are given by $B_{1}\times\mathcal{V}_{i}$, where
		$\mathcal{V}_{i}\subset B_{2}\times H,i\in\{1,\ldots,\ell\}$, are the Selgrade
		bundles of $\Phi$.
	\end{lemma}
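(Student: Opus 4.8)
The plan is to verify two separate assertions: first, that the linear part of the split affine flow $\Psi$ is the flow $\widetilde{\Phi}$ as claimed, and second, that the Selgrade bundles of $\widetilde{\Phi}$ are exactly $B_1 \times \mathcal{V}_i$. I would begin by recalling Definition~\ref{Definition_affine}: an affine flow has a distinguished linear part obtained by dropping the inhomogeneous integral term. Reading off the split affine flow formula, the integral term $\int_0^t(\theta_t^1 b^1, \Phi_{t-s}(\theta_s^2 b^2, f(b^1,s)))\,ds$ is precisely the inhomogeneous part (it vanishes when $f \equiv 0$, and is independent of $x$), so deleting it leaves $\widetilde{\Phi}_t(b^1,b^2,x) = (\theta_t^1 b^1, \Phi_t(b^2,x))$. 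I would confirm that this $\widetilde{\Phi}$ is genuinely a linear flow over the base $B_1 \times B_2$: its base component is $\theta = (\theta^1,\theta^2)$, and its fiber action $x \mapsto \Phi_t(b^2,x)$ is linear in $x$ because $\Phi$ is a linear flow. This identifies $\widetilde{\Phi}$ as the linear part in the sense of Definition~\ref{Definition_affine}.

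For the second assertion I would apply Selgrade's theorem (Theorem~\ref{Selgrade}) to $\widetilde{\Phi}$. The key structural point is that $\widetilde{\Phi}$ acts on fibers only through the $B_2$-coordinate: the $B_1$-coordinate is carried along passively and does not influence the linear map on $H$. Consequently the induced projective flow $\mathbb{P}\widetilde{\Phi}$ on $(B_1 \times B_2) \times \mathbb{P}H$ factors as the product of the flow $\theta^1$ on $B_1$ with the projective flow $\mathbb{P}\Phi$ on $B_2 \times \mathbb{P}H$. I would then argue that the chain recurrent components of this product flow are exactly $B_1 \times \mathcal{M}_i$, where $\mathcal{M}_i = \mathbb{P}\mathcal{V}_i$ are the chain recurrent components of $\mathbb{P}\Phi$. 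The natural tool here is the remark following Theorem~\ref{Theorem_chains}, namely that the product of two chain transitive flows is chain transitive: since $\theta^1$ on $B_1$ is chain transitive and each $\mathcal{M}_i$ is chain transitive for $\mathbb{P}\Phi$, each product $B_1 \times \mathcal{M}_i$ is chain transitive for $\mathbb{P}\widetilde{\Phi}$, hence contained in a single chain recurrent component.

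To see that these products are in fact the full components (and not merely contained in larger ones), I would use a limit-set argument together with Theorem~\ref{Theorem_chains}(i)--(ii): a point $(b^1, b^2, p)$ can only chain-reach $(b^1{}', b^2{}', p')$ in $\mathbb{P}\widetilde{\Phi}$ if, projecting to the $B_2 \times \mathbb{P}H$ factor, $(b^2,p)$ chain-reaches $(b^2{}', p')$ under $\mathbb{P}\Phi$, which forces $p,p'$ to lie in the same $\mathcal{M}_i$. Thus no chain recurrent component can span two different $\mathcal{M}_i$, and since the $B_1 \times \mathcal{M}_i$ are already chain transitive and their union covers the whole chain recurrent set of the product flow, they must be exactly the components. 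Taking lifts $\mathbb{P}^{-1}$ as in Theorem~\ref{Selgrade} then gives the Selgrade bundles $\mathbb{P}^{-1}(B_1 \times \mathcal{M}_i) = B_1 \times \mathbb{P}^{-1}\mathcal{M}_i = B_1 \times \mathcal{V}_i$.

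I expect the main obstacle to be the rigorous handling of the product structure of chain recurrence on the projective bundle: while the statement that the product of chain transitive flows is chain transitive gives the easy inclusion, the converse direction—showing that a chain recurrent component cannot be strictly larger than $B_1 \times \mathcal{M}_i$—requires care in projecting chains from the product space down to the $B_2 \times \mathbb{P}H$ factor and checking that $(\varepsilon,T)$-chains project to $(\varepsilon,T)$-chains. Since the $B_1$-component evolves autonomously under $\theta^1$ and does not couple into the fiber dynamics, the projection of a chain is again a chain of the same parameters, so this step should go through cleanly; nonetheless it is the one place where the argument must be stated precisely rather than waved through.
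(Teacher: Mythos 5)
Your proof is correct, but its key step takes a genuinely different route from the paper's. Both arguments handle the easy inclusion identically: by the remark following Theorem \ref{Theorem_chains}, the product of the chain transitive flow $\theta^{1}$ on $B_{1}$ with $\mathbb{P}\Phi$ restricted to $\mathcal{M}_{i}=\mathbb{P}\mathcal{V}_{i}$ is chain transitive, so each $B_{1}\times\mathcal{M}_{i}$ lies in a single chain recurrent component of $\mathbb{P}\widetilde{\Phi}$. For the converse, however, the paper never projects chains; it invokes the other half of Theorem \ref{Selgrade}: since the estimate (\ref{exp_sep}) for the exponentially separated sums of consecutive Selgrade bundles of $\Phi$ involves the fiber dynamics only through $\Phi$, and these are independent of the $b^{1}$-coordinate, the subbundles $B_{1}\times\mathcal{V}_{i}$ are again exponentially separated; because the Selgrade decomposition is the \emph{finest} decomposition into exponentially separated subbundles, each $B_{1}\times\mathcal{V}_{i}$ is a direct sum of Selgrade bundles of $\widetilde{\Phi}$, and combined with the chain transitivity of $B_{1}\times\mathbb{P}\mathcal{V}_{i}$ this forces equality. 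You instead stay entirely on the topological side and prove the stronger statement that the chain recurrent components of $\mathbb{P}\widetilde{\Phi}$ are exactly the products $B_{1}\times\mathcal{M}_{i}$, via the projection-of-chains argument: since the product metric is the maximum of the factor metrics and the $B_{1}$-coordinate decouples from the fiber action, an $(\varepsilon,T)$-chain in the product projects to an $(\varepsilon,T)$-chain in $B_{2}\times\mathbb{P}H$, so every component is trapped inside some $B_{1}\times\mathcal{M}_{i}$, and maximality yields equality. Your route is more elementary in that it avoids exponential separation altogether and directly identifies the components; the paper's route buys brevity, since exponential separation of $B_{1}\times\mathcal{V}_{i}$ is immediate from the independence of the two components. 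One point worth making explicit in your write-up: applying Theorem \ref{Selgrade} to $\widetilde{\Phi}$ presupposes chain transitivity of the base flow on $B_{1}\times B_{2}$, which holds by Theorem \ref{Theorem_chains} as the paper records in the setup preceding the definition of split affine flows.
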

	
	\begin{proof}
		By the definitions, $\widetilde{\Phi}$ is the linear part of $\Psi$. By
		Theorem \ref{Theorem_Selgrade} the Selgrade decomposition is the finest
		decomposition into exponentially separated subbundles. Hence the Selgrade
		bundles $\mathcal{V}_{i}$ are exponentially separated. Since the two
		components $\theta_{t}^{1}b^{1}$ and $\Phi_{t}(b^{2},x)$ are independent, it
		follows that the subbundles $B_{1}\times\mathcal{V}_{i}$ are exponentially
		separated. Theorem \ref{Theorem_chains} implies that the product flow on
		$B_{1}\times\mathbb{P}\mathcal{V}_{i}$ is chain transitive, hence the
		subbundles $B_{1}\times\mathcal{V}_{i}$ are the Selgrade bundles.
	\end{proof}
	
	Any subbundle $\mathcal{V}\subset B_{2}\times H$ which is invariant for $\Phi$
	yields the invariant subbundle $B_{1}\times\mathcal{V}$ for $\widetilde{\Phi}%
	$. For $b^{2}\in B_{2}$ the points $(b^{2},0_{H},\pm1)\in B_{2}\times
	H\times\mathbb{R}=B_{2}\times H^{1}$ are the poles of the Poincar\'{e} sphere
	$\{b^{2}\}\times\mathbb{S}^{d}$. Define the polar subbundle $\mathcal{P}$ of
	$B_{2}\times H^{1}$ by
	\begin{equation}
		\mathcal{P}:=B_{2}\times\{0_{H}\}\times\mathbb{R}=\left\{  (b^{2},0_{H},r)\in
		B_{2}\times H\times\mathbb{R}\left\vert b^{2}\in B_{2},r\in\mathbb{R}\right.
		\right\}  . \label{P}%
	\end{equation}
	Then $\dim\mathcal{P}=1$ and $\mathcal{P}$ is a line bundle containing all
	poles. It is invariant for $\Phi_{t}^{1}(b^{2},x,r):=(\Phi_{t}(b^{2}%
	,x),r),t\in\mathbb{R}$. The set $\{e^{1}\}\times\mathcal{P}$ is invariant for
	the lift $\Psi^{1}$ to $B_{1}\times B_{2}\times H^{1}$. For a Selgrade bundle
	$\mathcal{V}_{i}$ of $\Phi$ the subbundle $\mathcal{V}_{i}^{\infty
	}=\mathcal{V}_{i}\times\{0\}$ of $B_{2}\times H^{1}$ yields the invariant
	subbundle $B_{1}\times\mathcal{V}_{i}^{\infty}$ of $B_{1}\times B_{2}\times
	H^{1}$ for $\Psi^{1}$. By Lemma \ref{Lemma_A} the subbundles $B_{1}%
	\times\mathcal{V}_{i}^{\infty}$ are the subbundles at infinity for $\Psi^{1}$.
	
	\begin{theorem}
		\label{Theorem_split}For a split affine flow $\Psi$ on $B_{1}\times
		B_{2}\times H$ with lift $\Psi^{1}$ to $B_{1}\times B_{2}\times H^{1}$ the
		central Selgrade bundle $\mathcal{V}_{c}^{1}\subset B_{1}\times B_{2}\times
		H^{1}$ satisfies
		\[
		\mathcal{V}_{c}^{1}\cap\left(  \{e^{1}\}\times B_{2}\times H^{1}\right)
		=\{e^{1}\}\times\left(  \mathcal{P}\oplus\bigoplus\nolimits_{i}\mathcal{V}
		_{i}^{\infty}\right)  ,
		\]
		where $\mathcal{P}$ is the polar bundle and the sum is taken over all indices
		$i\in\{1,\ldots,\ell\}$ such that $h^{1}(\mathcal{V}_{i})=\mathbb{P}
		(\mathcal{V}_{i}\times\{1\})\subset B_{2}\times\mathbb{P}H^{1}$ is chain transitive.
	\end{theorem}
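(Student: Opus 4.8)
The plan is to reduce the statement to an application of Theorem \ref{Theorem_affine1} to the linear part $\Phi$ alone, by restricting $\Psi^{1}$ to the invariant fibre over the equilibrium $e^{1}$. Since $f(e^{1})=0$ and $\theta_{t}^{1}e^{1}=e^{1}$, the set $\{e^{1}\}\times B_{2}\times H^{1}$ is invariant for $\Psi^{1}$, and on it the lifted flow reduces to $\Psi_{t}^{1}(e^{1},b^{2},x,r)=(e^{1},\Phi_{t}^{1}(b^{2},x,r))$ with $\Phi_{t}^{1}(b^{2},x,r)=(\Phi_{t}(b^{2},x),r)$. Thus $\Phi^{1}$ is exactly the linear lift of $\Phi$, viewed as an affine flow on $B_{2}\times H$ with vanishing inhomogeneity, and Theorem \ref{Theorem_affine1} applies to it, producing a central Selgrade bundle which I denote $\mathcal{W}_{c}\subset B_{2}\times H^{1}$ together with subbundles at infinity $\mathcal{V}_{i}^{\infty}$.

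First I would locate the polar bundle inside $\mathcal{W}_{c}$. Because $\Phi_{t}(b^{2},0)=0$, the pole direction is fixed: $\mathbb{P}\Phi_{t}^{1}(b^{2},0,1)=(\theta_{t}^{2}b^{2},\mathbb{P}(0,1))$, so $\mathbb{P}\mathcal{P}$ is invariant and carries the chain transitive base flow $\theta^{2}$. As $\mathbb{P}\mathcal{P}\subset B_{2}\times\mathbb{P}H^{1,1}$, Theorem \ref{Theorem_affine1} forces $\mathbb{P}\mathcal{P}$ into the central component $\mathcal{M}_{c,\Phi}^{1}:=\mathbb{P}\mathcal{W}_{c}$ of $\mathbb{P}\Phi^{1}$, i.e. $\mathcal{P}\subset\mathcal{W}_{c}$. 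Next I would determine which subbundles at infinity lie in $\mathcal{W}_{c}$. The forward implication of Theorem \ref{Theorem_affine1}(ii) gives $\mathcal{V}_{i}^{\infty}\subset\mathcal{W}_{c}$ whenever $h^{1}(\mathcal{V}_{i})$ is chain transitive. For the converse, if $\mathcal{V}_{i}^{\infty}\subset\mathcal{W}_{c}$ then, since the fibre of $\mathcal{W}_{c}$ is linear and already contains both $\mathcal{V}_{i}\times\{0\}$ and the pole direction $\{0\}\times\mathbb{R}$, it contains every $(x,1)$ with $x\in\mathcal{V}_{i}$; hence $h^{1}(\mathcal{V}_{i})=\mathbb{P}(\mathcal{V}_{i}\times\{1\})\subset\mathbb{P}\mathcal{W}_{c}=\mathcal{M}_{c,\Phi}^{1}$ is a subset of a chain transitive set and so is itself chain transitive. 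Together with Theorem \ref{Theorem_affine1}(iii) this yields $\mathcal{W}_{c}=\mathcal{P}\oplus\bigoplus_{i\in S}\mathcal{V}_{i}^{\infty}$, where $S$ is precisely the index set appearing in the statement.

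It then remains to match the central bundle of the full flow with that of the restriction, i.e. to prove $\mathcal{V}_{c}^{1}\cap(\{e^{1}\}\times B_{2}\times H^{1})=\{e^{1}\}\times\mathcal{W}_{c}$. One inclusion is clean: $\{e^{1}\}\times\mathcal{M}_{c,\Phi}^{1}$ is chain transitive for $\mathbb{P}\Psi^{1}$ (chains inside the $e^{1}$-fibre are chains in the ambient bundle) and meets $B_{1}\times B_{2}\times\mathbb{P}H^{1,1}$, so by the uniqueness of the central component in Theorem \ref{Theorem_affine1} it lies in $\mathcal{M}_{c}^{1}$; hence $\{e^{1}\}\times\mathcal{W}_{c}\subseteq\mathcal{V}_{c}^{1}$.

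The main obstacle is the reverse inclusion. Comparing fibre dimensions through Theorem \ref{Theorem_affine1}(iii), it is equivalent to showing that every Selgrade bundle at infinity absorbed into the central bundle of the big flow $\Psi^{1}$ already belongs to $S$. The difficulty is that a point of $\mathcal{M}_{c}^{1}$ lying over $e^{1}$ is chain recurrent for $\mathbb{P}\Psi^{1}$ but not obviously for the restricted flow $\mathbb{P}\Phi^{1}$, since ambient chains may leave the $e^{1}$-fibre through nearby base points $b^{1}\neq e^{1}$. I would resolve this through the Morse spectrum: by Lemma \ref{Lemma_A} the linear part $\widetilde{\Phi}$ has Selgrade bundles $B_{1}\times\mathcal{V}_{i}$ with $\mathbf{\Sigma}_{Mo}(B_{1}\times\mathcal{V}_{i};\widetilde{\Phi})=\mathbf{\Sigma}_{Mo}(\mathcal{V}_{i};\Phi)$, because the $B_{1}$-component carries no exponential growth. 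Since the last coordinate of $\Psi^{1}$ is invariant and contributes spectrum $\{0\}$ exactly as the pole does, an infinity bundle is absorbed into the central bundle of $\Psi^{1}$ if and only if $0\in\mathbf{\Sigma}_{Mo}(\mathcal{V}_{i};\Phi)$; by Theorem \ref{Theorem_Morse} and the chain transitivity criterion above, the same condition characterizes membership in $S$ for $\Phi^{1}$. Thus both central index sets coincide with $\{i:0\in\mathbf{\Sigma}_{Mo}(\mathcal{V}_{i};\Phi)\}$, the two bundles agree over $e^{1}$, and the claimed formula follows.
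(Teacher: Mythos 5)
Your first three steps are sound and track the paper closely: restricting $\Psi^{1}$ to the invariant fibre $\{e^{1}\}\times B_{2}\times H^{1}$, identifying the restricted flow with the linear lift $\Phi^{1}$, placing $\mathbb{P}\mathcal{P}$ in the central component, and the span trick (pole direction plus $\mathcal{V}_{i}\times\{0\}$ generates $\mathcal{V}_{i}\times\{1\}$) for the converse characterization of the absorbed bundles of $\Phi^{1}$ -- this reproduces the content of Remark \ref{Remark_linear2}. The inclusion $\{e^{1}\}\times\mathcal{W}_{c}\subseteq\mathcal{V}_{c}^{1}$ via chains inside the invariant fibre is also fine. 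The genuine gap is your final step. The claimed equivalence ``$\mathcal{V}_{i}^{\infty}$ is absorbed into the central bundle if and only if $0\in\mathbf{\Sigma}_{Mo}(\mathcal{V}_{i};\Phi)$'' is not available and is not true in the generality needed. The paper proves only one direction, and only under the strictly stronger hypothesis $0\in\mathrm{int}\,\mathbf{\Sigma}_{Mo}(\mathcal{V}_{i};\Phi)$ (Theorem \ref{Corollary4.4}, a \emph{sufficient} condition for chain transitivity of $h^{1}(\mathcal{V}_{i})$); boundary membership $0\in\mathbf{\Sigma}_{Mo}(\mathcal{V}_{i};\Phi)$ gives nothing. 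In the other direction, absorption only yields $\mathbf{\Sigma}_{Mo}(\mathcal{V}_{i};\Phi)\subset\mathbf{\Sigma}_{Mo}(\mathcal{V}_{c}^{1};\Psi^{1})$ by Theorem \ref{Theorem_spectrum}(i); it does not force $0$ into $\mathbf{\Sigma}_{Mo}(\mathcal{V}_{i};\Phi)$ itself. Moreover the paper explicitly warns (after Theorem \ref{Theorem_Morse}, citing the Salamon--Zehnder example) that the spectral intervals of distinct Selgrade bundles may overlap and several may contain $0$, so membership of $0$ in the Morse spectrum cannot serve as the characterizing invariant on which your identification of the two index sets rests.

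The irony is that you already hold the correct tool: the paper's own proof of the reverse inclusion is exactly your step-3 span trick, applied directly to $\mathcal{V}_{c}^{1}$ over the $e^{1}$-fibre, with no spectral input and no intermediate object $\mathcal{W}_{c}$. Concretely: by Lemma \ref{Lemma_A} the bundles at infinity of $\Psi^{1}$ are $B_{1}\times\mathcal{V}_{i}^{\infty}$, so Theorem \ref{Theorem_affine1}(iii) gives $\dim\mathcal{V}_{c}^{1}=1+\sum_{i\in I}\dim\mathcal{V}_{i}$, where $I$ is the set of absorbed indices; since $\{e^{1}\}\times\mathcal{P}\subset\mathcal{V}_{c}^{1}$, a fibrewise dimension count already forces
\[
\mathcal{V}_{c}^{1}\cap\left(  \{e^{1}\}\times B_{2}\times H^{1}\right)
=\{e^{1}\}\times\Bigl(  \mathcal{P}\oplus\bigoplus\nolimits_{i\in
I}\mathcal{V}_{i}^{\infty}\Bigr)  .
\]
It then remains to show $I=S$. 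If $i\in I$, then fibrewise linearity gives $(e^{1},b^{2},x,1)=(e^{1},b^{2},0,1)+(e^{1},b^{2},x,0)\in\mathcal{V}_{c}^{1}$, hence $\{e^{1}\}\times\mathbb{P}\left(  \mathcal{V}_{i}\times\{1\}\right)  \subset\mathcal{M}_{c}^{1}$ is chain transitive, and therefore $\mathbb{P}\left(  \mathcal{V}_{i}\times\{1\}\right)  $ is chain transitive, i.e.\ $i\in S$; the inclusion $S\subseteq I$ is your (and the paper's) forward argument via the limit $\mathbb{P}(b,x,\frac{1}{n})\rightarrow\mathbb{P}(b,x,0)$. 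Replacing your Morse-spectrum paragraph by this argument repairs the proof and collapses it onto the paper's.
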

	
	\begin{proof}
		Theorem \ref{Theorem_affine1} yields that the central Selgrade bundle
		$\mathcal{V}_{c}^{1}$ is the unique Selgrade bundle of $\Psi^{1}$ such that
		$\mathcal{M}_{c}^{1}\cap\left(  B_{1}\times B_{2}\times\mathbb{P}%
		H^{1,1}\right)  \not =\varnothing$. The set $\mathbb{P}\mathcal{P}=B_{2}%
		\times\mathbb{P}\left(  \{0\}\times\{1\}\right)  \subset B_{2}\times
		\mathbb{P}H^{1,1}$ is chain transitive, since $B_{2}$ is chain transitive. It
		follows that also $\{e^{1}\}\times\mathbb{P}\mathcal{P}$ is chain transitive.
		Thus the set $\{e^{1}\}\times\mathbb{P}\mathcal{P}\subset B_{1}\times\left(
		B_{2}\times\mathbb{P}H^{1,1}\right)  $ is contained in a chain transitive
		component of $\mathbb{P}\Psi^{1}$, hence in $\mathcal{M}_{c}^{1}$. This
		implies that $\{e^{1}\}\times\mathcal{P}\subset\mathcal{V}_{c}^{1}$. We claim
		that
		\begin{equation}
			\mathcal{V}_{c}^{1}\cap\left(  \{e^{1}\}\times B_{2}\times H^{1}\right)
			=\{e^{1}\}\times\left(  \mathcal{P}\oplus\bigoplus\nolimits_{i}\mathcal{V}%
			_{i}^{\infty}\right)  , \label{4.3}%
		\end{equation}
		where $I$ is the set of all indices $i\in\{1,\ldots,\ell\}$ such that
		$B_{1}\times\mathcal{V}_{i}^{\infty}\subset\mathcal{V}_{c}^{1}$. In fact, the
		inclusion \textquotedblleft$\supset$\textquotedblright\ is clear. Since
		$B_{1}\times\mathcal{V}_{i}^{\infty}$ are the subbundles at infinity for
		$\Psi^{1}$, Theorem \ref{Theorem_affine1}(iii) shows that the dimension of
		$\mathcal{V}_{c}^{1}$ is
		\[
		1+\sum\nolimits_{i\in I}\dim\left(  B_{1}\times\mathcal{V}_{i}^{\infty
		}\right)  =1+\sum\nolimits_{i\in I}\dim\mathcal{V}_{i}^{\infty}.
		\]
		This equals the dimension of $\mathcal{P}\oplus\bigoplus\nolimits_{i}%
		\mathcal{V}_{i}^{\infty}$, hence equality (\ref{4.3}) holds.
		
		It remains to show that the summation in (\ref{4.3}) can be taken over all $i$
		such that $\mathbb{P}(\mathcal{V}_{i}\times\{1\})$ is chain transitive. If
		$h^{1}(\mathcal{V}_{i})=\mathbb{P}\left(  \mathcal{V}_{i}\times\{1\}\right)  $
		is chain transitive, then $\{e^{1}\}\times\mathbb{P}\left(  \mathcal{V}%
		_{i}\times\{1\}\right)  $ is chain transitive, and as in the proof of Theorem
		\ref{Theorem_affine1}(iv) it follows that $\{e^{1}\}\times\mathcal{V}%
		_{i}^{\infty}\subset\mathcal{V}_{c}^{1}$. Conversely, suppose that
		$\{e^{1}\}\times\mathcal{V}_{i}^{\infty}\subset\mathcal{V}_{c}^{1}$. Equality
		(\ref{4.3}) implies that for $(e^{1},(b^{2},x))\in B_{1}\times\mathcal{V}%
		_{i}$
		\[
		(e^{1},b^{2},x,1)=(e^{1},b^{2},0_{H},1)\oplus(e^{1},b^{2},x,0)\in\left(
		\{e^{1}\}\times\mathcal{P}\right)  \oplus\left(  \{e^{1}\}\times
		\mathcal{V}_{i}^{\infty}\right)  \subset\mathcal{V}_{c}^{1}\mbox{.}
		\]
		This shows that $\{e^{1}\}\times\mathcal{V}_{i}\times\{1\}\subset
		\mathcal{V}_{c}^{1}$, hence $\{e^{1}\}\times\mathbb{P}\left(  \mathcal{V}%
		_{i}\times\{1\}\right)  \subset\mathcal{M}_{c}^{1}$ is chain transitive. It
		follows that $\mathbb{P}\left(  \mathcal{V}_{i}\times\{1\}\right)  $ is chain transitive.
	\end{proof}
	
	\begin{remark}
		\label{Remark_linear2}Theorem \ref{Theorem_split} applies, in particular, to
		linear flows $\Phi$, where $B_{1}$ is trivial and hence may be omitted. The
		lift $\Phi^{1}$ has the form $\Phi_{t}^{1}(b,x,r)=\left(  \Phi_{t}%
		(b,x),r\right)  $ for $(b,x,r)\in B\times H\times\mathbb{R}$, and the points
		$(b,0_{H},\pm1)$ are the poles of the Poincar\'{e} sphere $\{b\}\times
		\mathbb{S}^{d}$. The central Selgrade bundle satisfies
		\[
		\mathcal{V}_{c}^{1}=\mathcal{P}\oplus\bigoplus\nolimits_{i}\mathcal{V}%
		_{i}^{\infty},
		\]
		where $\mathcal{P}=B\times\{0_{H}\}\times\mathbb{R}$ is the polar bundle and the
		sum is taken over all indices $i\in\{1,\ldots,\ell\}$ such that $h^{1}%
		(\mathcal{V}_{i})\subset B\times\mathbb{P}H^{1}$ is chain transitive.
	\end{remark}
	
	We have seen that the subbundles $\mathcal{V}_{i}$ for linear flows $\Phi$,
	which yield chain transitive sets on the projective Poincar\'{e} bundle, play
	a special role. The paper Colonius \cite{Col23} has discussed the lift of
	linear flows to $B\times H^{1}$ and chain transitivity for the projection to
	the northern hemisphere of the Poincar\'{e} sphere bundle. The following
	theorem formulates similar results in the projective Poincar\'{e} bundle.
	Since the proofs are completely analogous, we omit them.
	
	\begin{theorem}
		\label{Corollary4.4}Let $\mathcal{V}_{i}$ be a Selgrade bundle of a linear
		flow $\Phi$ on $B\times H$. Then the following assertions are equivalent:
		
		(a) The set $h^{1}(\mathcal{V}_{i})=\mathbb{P}(\mathcal{V}_{i}\times\{1\})$ is
		chain transitive in the projective Poincar\'{e} bundle $B\times\mathbb{P}
		H^{1}$.
		
		(b) The subbundle $\mathcal{V}_{i}$ contains a line $l=\{(b,\alpha
		x_{0})\left\vert \alpha\in\mathbb{R}\right.  \}$ for some $x_{0}\not =0$ such
		that $h^{1}(l)$ is chain transitive in $B\times\mathbb{P}H^{1}$.
		
		A sufficient condition for (b) (or (a)) is $0\in\mathrm{int}\Sigma
		_{Mo}(\mathcal{V}_{i};\Phi)$.
	\end{theorem}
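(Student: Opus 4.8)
The plan is to collapse all three conditions to the single relation $\mathcal{V}_{i}^{\infty}\subset\mathcal{V}_{c}^{1}$ for the lift $\Phi^{1}$ of $\Phi$, which by Theorem \ref{Theorem_affine1}(ii) and Remark \ref{Remark_linear2} is exactly the criterion for chain transitivity of $h^{1}(\mathcal{V}_{i})$, and to reserve the genuine work for the sufficient spectral condition. Throughout I write $\mathcal{M}_{c}^{1}=\mathbb{P}\mathcal{V}_{c}^{1}$ for the central chain recurrent component of $\mathbb{P}\Phi^{1}$ and recall that, by Theorem \ref{Theorem_affine1}, it is the unique chain recurrent component meeting $B\times\mathbb{P}H^{1,1}$.

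The two directions of the equivalence are then short. For (a)$\Rightarrow$(b): since $\mathcal{V}_{i}$ is nontrivial, choose any $x_{0}\neq0$ in a fiber $\mathcal{V}_{i}(b_{0})$ and let $l$ be the associated line; then $h^{1}(l)\subset h^{1}(\mathcal{V}_{i})$, and because every subset of a chain transitive set is chain transitive, $h^{1}(l)$ inherits chain transitivity. For (b)$\Rightarrow$(a) I would pass to $\overline{h^{1}(l)}$, which is again chain transitive. It contains the pole $\mathbb{P}(b_{0},0,1)$ (the image at $\alpha=0$, a point of $B\times\mathbb{P}H^{1,1}$) and, letting $\alpha\rightarrow\infty$ in $\mathbb{P}(b_{0},\alpha x_{0},1)=\mathbb{P}(b_{0},x_{0},1/\alpha)$, the limit $\mathbb{P}(b_{0},x_{0},0)\in\mathbb{P}\mathcal{V}_{i}^{\infty}$. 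A chain transitive set meeting $B\times\mathbb{P}H^{1,1}$ lies in a chain recurrent component meeting $B\times\mathbb{P}H^{1,1}$, which by Theorem \ref{Theorem_affine1} is $\mathcal{M}_{c}^{1}$; hence $\overline{h^{1}(l)}\subset\mathcal{M}_{c}^{1}$ and $\mathbb{P}\mathcal{V}_{i}^{\infty}\cap\mathcal{M}_{c}^{1}\neq\varnothing$. Since $\mathbb{P}\mathcal{V}_{i}^{\infty}$ lies in a single chain recurrent component (Lemma \ref{Lemma_dim1}(i)) and distinct components are disjoint, $\mathbb{P}\mathcal{V}_{i}^{\infty}\subset\mathcal{M}_{c}^{1}$, i.e. $\mathcal{V}_{i}^{\infty}\subset\mathcal{V}_{c}^{1}$, which by Remark \ref{Remark_linear2} is (a).

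It remains to treat the sufficient condition $0\in\mathrm{int}\,\Sigma_{Mo}(\mathcal{V}_{i};\Phi)$, the only substantial step. The goal is to chain-connect some point of $\mathbb{P}\mathcal{V}_{i}^{\infty}$ to the pole in both directions, which by the disjointness argument above forces $\mathbb{P}\mathcal{V}_{i}^{\infty}\subset\mathcal{M}_{c}^{1}$. On the invariant subbundle $\mathcal{V}_{i}\times\mathbb{R}$ the flow $\Phi^{1}$ fixes the last coordinate, so a point $\mathbb{P}(x,\rho)$ with $\rho>0$ corresponds to the affine point $x/\rho\in\mathcal{V}_{i}(b)$, whose norm is large when $\rho$ is small (near $\mathbb{P}\mathcal{V}_{i}^{\infty}$, by Proposition \ref{Proposition_e}(iv)) and small when the $\Phi$-orbit of the direction has contracted (near the pole). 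The exponential growth rate of a chain $\zeta$ in $\mathbb{P}\mathcal{V}_{i}$ measures precisely the net change of $\log\|\cdot\|$ along $\zeta$; since $0$ is interior to the spectral interval there exist chains of strictly negative and of strictly positive growth rate, and by chain transitivity of the component $\mathbb{P}\mathcal{V}_{i}$ these can be arranged to begin and end at prescribed directions. Transplanting a negative-growth chain to $\mathbb{P}(\mathcal{V}_{i}\times\mathbb{R})$ with a fixed small $\rho>0$ drives the affine norm to $0$, that is, toward the pole, while a positive-growth chain drives it to $\infty$, that is, toward $\mathbb{P}\mathcal{V}_{i}^{\infty}$; concatenating the two closes the loop.

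The main obstacle is exactly this transplantation: one must convert a finite $(\varepsilon,T)$-chain of prescribed average growth rate in $\mathbb{P}\mathcal{V}_{i}$ into a genuine $(\varepsilon',T)$-chain of $\mathbb{P}\Phi^{1}$ that actually reaches a prescribed neighborhood of the pole (respectively of $\mathbb{P}\mathcal{V}_{i}^{\infty}$), controlling simultaneously the projective jumps and the radial coordinate. Concretely one iterates the chain until the accumulated factor $e^{\lambda(\zeta)\tau}$ dominates the bounded distortion between the metric (\ref{metric_P}) on $\mathbb{P}H^{1}$ and the Euclidean norm on the fibers, while keeping every individual jump below $\varepsilon'$. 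This is precisely the estimate carried out in Colonius \cite{Col23}, which is why the present proof may be omitted.
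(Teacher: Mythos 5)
Your proposal is correct, and on the implication (b)$\Rightarrow$(a) it takes a genuinely different route from the paper. The paper's own proof is essentially a citation: (a)$\Rightarrow$(b) is declared clear, the converse is said to follow ``using the same construction as in the proof of \cite[Theorem 4.3]{Col23}'' --- that is, by repeating an explicit chain construction in the projective Poincar\'{e} bundle --- and the spectral sufficiency ``follows as \cite[Theorem 4.7]{Col23}''. You instead prove (b)$\Rightarrow$(a) self-containedly from the paper's structural results: the closure of $h^{1}(l)$ is chain transitive, contains both the pole $(b_{0},\mathbb{P}(0,1))\in B\times\mathbb{P}H^{1,1}$ (at $\alpha=0$) and the limit $\mathbb{P}(b_{0},x_{0},0)\in\mathbb{P}\mathcal{V}_{i}^{\infty}$, hence lies in $\mathcal{M}_{c}^{1}$ because Theorem \ref{Theorem_affine1} makes $\mathcal{M}_{c}^{1}$ the unique chain recurrent component meeting $B\times\mathbb{P}H^{1,1}$; then disjointness of chain recurrent components together with Lemma \ref{Lemma_dim1}(i) forces $\mathbb{P}\mathcal{V}_{i}^{\infty}\subset\mathcal{M}_{c}^{1}$, i.e.\ $\mathcal{V}_{i}^{\infty}\subset\mathcal{V}_{c}^{1}$, and the converse step in the proof of Theorem \ref{Theorem_split} (specialized to linear flows as in Remark \ref{Remark_linear2}) turns this back into chain transitivity of $\mathbb{P}(\mathcal{V}_{i}\times\{1\})$. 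This is circularity-free, since Theorem \ref{Theorem_split} precedes the present theorem and does not use it; what your route buys is a soft, decomposition-theoretic argument in place of the hands-on chain construction of \cite{Col23}, at the price of invoking the full Selgrade machinery and of one step you leave implicit: to read membership in the sum of Remark \ref{Remark_linear2} off from $\mathcal{V}_{i}^{\infty}\subset\mathcal{V}_{c}^{1}$ you need either the observation that otherwise $\mathcal{V}_{i}^{\infty}$ would be a separate Selgrade bundle at infinity, meeting $\mathcal{V}_{c}^{1}$ only in the zero section, or the explicit decomposition $(b,x,1)=(b,0,1)+(b,x,0)\in\mathcal{P}\oplus\mathcal{V}_{i}^{\infty}\subset\mathcal{V}_{c}^{1}$ used in the proof of Theorem \ref{Theorem_split}. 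On the sufficiency of $0\in\mathrm{int}\,\Sigma_{Mo}(\mathcal{V}_{i};\Phi)$ you and the paper stand on identical ground: your sketch of the mechanism is faithful (negative-growth chains drive the radial coordinate toward the pole, positive-growth chains toward $\mathbb{P}\mathcal{V}_{i}^{\infty}$, with chain transitivity of $\mathbb{P}\mathcal{V}_{i}$ steering the directions), and you defer exactly the same jump-versus-radius estimate to \cite{Col23} that the paper defers by citing \cite[Theorem 4.7]{Col23}, so your proposal is no less complete than the published proof.
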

	
	\begin{proof}
		It is clear that (a) implies (b). The converse follows using the same
		construction as in the proof of \cite[Theorem 4.3]{Col23}. The last assertion
		follows as \cite[Theorem 4.7]{Col23}.
	\end{proof}
	
	\begin{remark}
		Recall that by Theorem \ref{Theorem_spectrum} the Morse spectrum $\Sigma
		_{Mo}(\mathcal{V}_{c}^{1};\Psi^{1})$ of the central Selgrade bundle
		$\mathcal{V}_{c}^{1}$ contains $0$ if $\Psi$ possesses a periodic trajectory.
		If $0\in\mathrm{int}\Sigma_{Mo}(\mathcal{V}_{c}^{1};\Psi^{1})$ one can apply
		Theorem \ref{Corollary4.4} to the linear flow $\Psi^{1}$. With $H^{2}
		:=H^{1}\times\mathbb{R}$ and
		\[
		h^{2}:B\times H^{1}\rightarrow B\times\mathbb{P}H^{2},\,h^{2}
		(b,x,r):=(b,\mathbb{P}(x,r,1)),
		\]
		one deduces that $h^{2}(\mathcal{V}_{c}^{1})=\mathbb{P}\left(  \mathcal{V}
		_{c}^{1}\times\{1\}\right)  $ is chain transitive on $B\times\mathbb{P}H^{2}$.
	\end{remark}
	
	\section{Uniformly hyperbolic affine flows\label{Section5}}
	
	In this section we determine for uniformly hyperbolic affine flows the central
	Selgrade bundle for the lifted flow $\Psi^{1}$.
	
	First we define uniformly hyperbolic affine flows; cf. Colonius and Santana
	\cite{ColSan11}.
	
	\begin{definition}
		An affine flow $\Psi$ on $B\times H$ with linear part $\Phi$ is uniformly
		hyperbolic if $\Phi$ admits a decomposition $B\times H=\mathcal{V}^{1}
		\oplus\mathcal{V}^{2}$ into $\Phi$-invariant subbundles $\mathcal{V}^{1}$ and
		$\mathcal{V}^{2}$ such that
		
		(i) the restrictions $\Phi_{t}^{i}(b,x)=\left(  \theta_{t}b,\varphi
		^{i}(t,b,x)\right)  $ to $\mathcal{V}^{i},i=1,2$, satisfy for constants
		$\alpha,K>0$ and for all $b\in B$
		\[
		\left\Vert \Phi_{t}^{1}(b,\cdot)\right\Vert \leq Ke^{-\alpha t}\mbox{ for
		}t\geq0\mbox{ and }\left\Vert \Phi_{t}^{2}(b,\cdot)\right\Vert \leq
		Ke^{\alpha t}\mbox{ for }t\leq0,
		\]

		(ii) there is $M>0$ with $\left\Vert f(b)\right\Vert _{\infty}\leq M$ for all
		$b\in B$, and the following maps defined on $B$ with values in $H$ are
		continuous:
		\[
		b\mapsto\int_{-\infty}^{0}\varphi^{1}(-s,\theta_{s}b,f(b,s))ds\mbox{ and
		}b\mapsto\int_{-\infty}^{0}\varphi^{2}(s,\theta_{-s}b,f(b,-s))ds.
		\]
		
	\end{definition}
	
	The next result follows by \cite[Corollary 1 and Theorem 2.5]{ColSan11}.
	
	\begin{theorem}
		\label{Theorem5.1}Consider a uniformly hyperbolic affine flow $\Psi$ on
		$B\times H$ with linear part $\Phi$.
		
		(i) Then for every $b\in B$ there is a unique bounded solution $(\theta
		_{t}b,e(b,t)),\allowbreak t\in\mathbb{R},$ for the flow $\Psi$ and the map
		$e:\mathbb{R}\times B\rightarrow H$ is continuous.
		
		(ii) The affine flow $\Psi$ and its homogeneous part $\Phi$ are conjugate by
		the homeomorphism
		\[
		h_{aff}=(id_{B},h_{aff}^{0}):B\times H\rightarrow B\times H\mbox{ satisfying
		}h_{aff}(\Psi_{t}(b,x))=\Phi_{t}(h_{aff}(b,x)),
		\]
		where $h_{aff}^{0}(b,x)=x-e(b,0),b\in B$.
	\end{theorem}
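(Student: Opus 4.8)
The plan is to construct the bounded solution explicitly via the hyperbolic variation-of-constants (Green's function) formula, verify that it solves the affine flow, deduce uniqueness from the hyperbolicity of the linear part, and then obtain the conjugacy by translating each fiber by the bounded solution. Throughout I write $P^{1}_{b},P^{2}_{b}$ for the projections associated with the splitting $H=\mathcal{V}^{1}_{b}\oplus\mathcal{V}^{2}_{b}$ and set $g^{i}(b,s):=P^{i}_{\theta_{s}b}f(b,s)$. Guided by the dichotomy estimates in the definition (forward contraction of $\varphi^{1}$, backward contraction of $\varphi^{2}$), I would define
\[
e(b,0):=\int_{-\infty}^{0}\varphi^{1}(-s,\theta_{s}b,g^{1}(b,s))\,ds-\int_{0}^{\infty}\varphi^{2}(-s,\theta_{s}b,g^{2}(b,s))\,ds.
\]
Both improper integrals converge absolutely: using $\Vert\varphi^{1}(\tau,\cdot)\Vert\le Ke^{-\alpha\tau}$ for $\tau\ge0$, $\Vert\varphi^{2}(\tau,\cdot)\Vert\le Ke^{\alpha\tau}$ for $\tau\le0$, the bound $\Vert f(b)\Vert_{\infty}\le M$ from assumption (ii), and the uniform boundedness of the projections on the compact base $B$, each integrand decays like $e^{-\alpha|s|}$. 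The same estimates applied to $e(b,t):=\psi(t,b,e(b,0))$ give $\sup_{b,t}\Vert e(b,t)\Vert<\infty$, so the orbit is bounded.

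To verify that $(\theta_{t}b,e(b,t))$ is a solution I would substitute $e(b,0)$ into \eqref{CPAA4} and use the cocycle identity $\varphi^{i}(t,b,\varphi^{i}(-s,\theta_{s}b,z))=\varphi^{i}(t-s,\theta_{s}b,z)$ to pull $\varphi^{i}(t,b,\cdot)$ inside each integral; merging $\int_{-\infty}^{0}$ with $\int_{0}^{t}$ (stable part) and $-\int_{0}^{\infty}$ with $\int_{0}^{t}$ (unstable part) collapses $\psi(t,b,e(b,0))$ to
\[
e(b,t)=\int_{-\infty}^{t}\varphi^{1}(t-s,\theta_{s}b,g^{1}(b,s))\,ds-\int_{t}^{\infty}\varphi^{2}(t-s,\theta_{s}b,g^{2}(b,s))\,ds,
\]
confirming the formula. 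Uniqueness then reduces to the fact that the homogeneous flow $\Phi$ admits no nonzero bounded orbit: the difference $y(\cdot)$ of two bounded affine solutions satisfies $y(t)=\varphi(t,b,y_{0})$, and the dichotomy (via the lower bounds obtained by applying the estimates to the inverse cocycle) forces the $\mathcal{V}^{2}$-component of $y_{0}$ to vanish by forward boundedness and the $\mathcal{V}^{1}$-component to vanish by backward boundedness, so $y_{0}=0$.

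For the continuity of $e$ the decisive input is assumption (ii): after the substitution $s\mapsto-s$ its two integrals are exactly the two summands of $e(b,0)$, so $b\mapsto e(b,0)$ is continuous; since $e(b,t)=\psi(t,b,e(b,0))$ and $\psi$ is continuous, $e$ is continuous on $\mathbb{R}\times B$. This proves (i). For (ii) I would first establish the shift relation $e(\theta_{\tau}b,t)=e(b,t+\tau)$: time-shifting the bounded orbit over $b$ produces a bounded orbit over $\theta_{\tau}b$ (using \eqref{CPAA3}), and uniqueness identifies them; in particular $e(\theta_{t}b,0)=e(b,t)$. Because $\psi$ is affine in $x$, one has $\psi(t,b,x)-\psi(t,b,e(b,0))=\varphi(t,b,x-e(b,0))$, whence
\[
h_{aff}(\Psi_{t}(b,x))=(\theta_{t}b,\ \psi(t,b,x)-e(\theta_{t}b,0))=(\theta_{t}b,\ \varphi(t,b,x-e(b,0)))=\Phi_{t}(h_{aff}(b,x)),
\]
and $h_{aff}$ is a homeomorphism with continuous inverse $(b,y)\mapsto(b,y+e(b,0))$. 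The main obstacle is precisely the continuity of $e$: since $f$ is only $L^{\infty}$-valued, continuous dependence of the Green's-function integrals on $b$ is not automatic and must be imported from assumption (ii); the remaining steps are standard hyperbolic bookkeeping.
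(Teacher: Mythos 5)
Your proposal is correct and takes essentially the same approach as the paper's source: the paper gives no proof of Theorem \ref{Theorem5.1}, deferring to \cite[Corollary 1 and Theorem 2.5]{ColSan11}, whose argument is precisely your hyperbolic variation-of-constants (Green's function) construction of the bounded orbit, uniqueness via the absence of nonzero bounded orbits for $\Phi$, continuity of $e(\cdot,0)$ imported from condition (ii) of the definition after the substitution $s\mapsto -s$, and conjugacy by fiberwise translation. Your reading $g^{i}(b,s)=P^{i}_{\theta_{s}b}f(b,s)$ is the intended interpretation of the integrals in that condition (the restrictions $\varphi^{i}$ are applied to $f(b,s)$ via the bundle projections, which are bounded since $B$ is compact), so there is no gap.
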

	
	Note that
	\[
	h_{aff}^{0}(\theta_{t}b,\psi(t,b,x))=\varphi(t,b,h_{aff}^{0}(b,x))\mbox{ for
		all }t\in\mathbb{R},b\in B,x\in H.
	\]

	Again we assume throughout that the base space $B$ is chain transitive. The
	following result characterizes the chain recurrent set for hyperbolic affine flows.
	
	\begin{theorem}
		\label{Theorem_hyperbolic1}Suppose that $\Psi$ is a uniformly hyperbolic
		affine flow. Then the chain recurrent set of the linear part $\Phi$ of $\Psi$
		is $\mathcal{R}=B\times\{0_{H}\}$ and $h_{aff}(\mathcal{R}%
		)=\{(b,-e(b,0))\left\vert b\in B\right.  \}$ is the chain recurrent set for
		the affine flow $\Psi$. The set $h_{aff}(\mathcal{R})$ is compact and chain transitive.
	\end{theorem}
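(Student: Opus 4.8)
The plan is to establish each of the three claims in Theorem~\ref{Theorem_hyperbolic1} in sequence, leveraging the conjugacy of Theorem~\ref{Theorem5.1}(ii) to transfer chain-recurrence information between $\Phi$ and $\Psi$. First I would show $\mathcal{R}=B\times\{0\}$ for the linear part $\Phi$. Since uniform hyperbolicity gives the exponentially separated splitting $B\times H=\mathcal{V}^{1}\oplus\mathcal{V}^{2}$ with $\Phi^{1}$ contracting forward and $\Phi^{2}$ expanding forward, Selgrade's theorem (Theorem~\ref{Selgrade}) together with the hypothesis $0\notin$ any Morse spectral interval forces the zero section to be the only chain recurrent set in the fibers: any nonzero vector grows or decays exponentially under forward or backward iteration, so it cannot return near itself via $(\varepsilon,T)$-chains once $T$ is large and $\varepsilon$ small. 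Concretely, I would argue that for $(b,x)$ with $x\neq 0$, writing $x=x^{1}+x^{2}$, either $\|\Phi_{t}(b,x)\|\to 0$ or $\to\infty$ along a direction, which by the Morse-spectrum characterization in Theorem~\ref{Theorem_Morse} precludes $0$ from the spectrum, so no chain can be recurrent off the zero section.

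\emph{Transferring to the affine flow.} Once $\mathcal{R}=B\times\{0\}$ is known for $\Phi$, the second claim follows directly from the conjugacy $h_{aff}$ of Theorem~\ref{Theorem5.1}(ii). A conjugacy of flows carries chain recurrent sets to chain recurrent sets (an $(\varepsilon,T)$-chain for $\Psi$ maps under the homeomorphism $h_{aff}$ to a $(\delta,T)$-chain for $\Phi$ with $\delta\to 0$ as $\varepsilon\to 0$, by uniform continuity on the compact base and the continuity of $h_{aff}^{0}$). Hence the chain recurrent set of $\Psi$ is $h_{aff}^{-1}(\mathcal{R})$. Since $h_{aff}^{0}(b,x)=x-e(b,0)$, the preimage of $B\times\{0\}$ is exactly $\{(b,e(b,0))\mid b\in B\}$; applying $h_{aff}$ in the stated direction gives $h_{aff}(\mathcal{R})=\{(b,-e(b,0))\mid b\in B\}$ as the chain recurrent set for $\Psi$, matching the claim. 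Here I would be careful with the direction of the conjugacy and the sign convention in $h_{aff}^{0}$, since $h_{aff}(\mathcal{R})$ in the statement is the image of the \emph{linear} chain recurrent set under $h_{aff}$, which equals the chain recurrent set of $\Psi$ precisely because $h_{aff}$ conjugates $\Phi$ back to $\Psi$ via its inverse.

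\emph{Compactness and chain transitivity.} For the final assertion, compactness of $h_{aff}(\mathcal{R})=\{(b,-e(b,0))\mid b\in B\}$ follows because $B$ is compact and the map $b\mapsto e(b,0)$ is continuous by Theorem~\ref{Theorem5.1}(i); the set is the continuous image of $B$, hence compact. Chain transitivity I would obtain by transferring along the conjugacy: the zero section $B\times\{0\}$ is chain transitive for $\Phi$ because it is (homeomorphic to) the base $B$ with the flow $\theta$, which is chain transitive by standing hypothesis, and $\Phi$ restricted to the zero section is just $\theta$. Since $h_{aff}$ is a conjugacy, it maps the chain transitive set $B\times\{0\}$ onto a chain transitive set, namely $h_{aff}(\mathcal{R})$.

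The step I expect to be the main obstacle is the first one, pinning down $\mathcal{R}=B\times\{0\}$ rigorously. The subtlety is that ``uniformly hyperbolic'' gives exponential estimates but one must convert these into a genuine statement that no $(\varepsilon,T)$-chain can recur off the zero section. The clean way is to invoke Selgrade's theorem and the Morse spectrum: uniform hyperbolicity means the Morse spectral intervals of all Selgrade bundles avoid $0$ (they lie in $(-\infty,-\alpha]$ or $[\alpha,\infty)$ in an appropriate sense), so by Theorem~\ref{Theorem_Morse} every nonzero Lyapunov exponent is bounded away from $0$; then a standard argument shows the projective flow $\mathbb{P}\Phi$ has no chain recurrence creating fiber recurrence, leaving only the zero section. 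I would present this via the exponential separation estimate \eqref{exp_sep} rather than grinding through the chain combinatorics, appealing to the fact that on a compact base the uniform rates make long chains drift monotonically in norm, which is incompatible with recurrence.
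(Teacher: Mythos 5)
Your outline tracks the paper's proof quite closely: identify the chain recurrent set of the linear part as $\mathcal{R}=B\times\{0\}$, transfer along $h_{aff}$ using uniform continuity (the paper proves uniform continuity of both $h_{aff}$ and $h_{aff}^{-1}$, which holds because $h_{aff}^{0}$ is a fiberwise translation by the bounded continuous function $e(\cdot,0)$ over a compact base), obtain compactness from compactness of $B$ and continuity of $e(\cdot,0)$, and get chain transitivity by pushing $(\delta,T)$-chains in the chain transitive zero section through the conjugacy. But one step of yours is genuinely broken: the handling of the conjugacy direction. From Theorem \ref{Theorem5.1}(ii), $h_{aff}(\Psi_{t}(b,x))=\Phi_{t}(h_{aff}(b,x))$, your \emph{first} conclusion is the right one -- the chain recurrent set of $\Psi$ is $h_{aff}^{-1}(\mathcal{R})=\{(b,e(b,0))\,\vert\,b\in B\}$, the graph of the bounded solution. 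Your subsequent about-face, declaring $h_{aff}(\mathcal{R})=\{(b,-e(b,0))\}$ to be the chain recurrent set ``because $h_{aff}$ conjugates $\Phi$ back to $\Psi$ via its inverse,'' is a non sequitur: that premise again produces $h_{aff}^{-1}(\mathcal{R})$, not $h_{aff}(\mathcal{R})$. The two sets really differ: by Theorem \ref{Theorem5.1}(i) the set $\{(b,e(b,0))\}$ is $\Psi$-invariant, while $\{(b,-e(b,0))\}$ in general is not, so the latter cannot be a chain recurrent set. What you stumbled on is a sign/direction inconsistency in the source itself -- the paper's proof silently uses $h_{aff}$ as a conjugacy \emph{from} $\Phi$ \emph{to} $\Psi$ (it maps a $\Phi$-chain in $B\times\{0\}$ onto a $\Psi$-chain by $h_{aff}$, and a $\Psi$-chain onto a $\Phi$-chain by $h_{aff}^{-1}$), which is incompatible with the displayed formula $h_{aff}^{0}(b,x)=x-e(b,0)$ together with Theorem \ref{Theorem5.1}(ii). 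The honest fix is to state the chain recurrent set of $\Psi$ as the graph of the bounded solution, not to contort the derivation to match the printed sign.

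The other soft spot is your first step. Invoking Theorem \ref{Theorem_Morse} does not close the argument: Morse spectra of the projectivized flow bounded away from $0$, or nonvanishing Lyapunov exponents of individual points, do not formally exclude chain recurrence in the noncompact bundle $B\times H$ -- that implication is exactly what must be proved, and your phrase ``a standard argument shows'' is where the content lives. You need the drift estimate you only sketch at the end: fix $T$ with $Ke^{-\alpha T}\leq\frac{1}{2}$; since the base is compact the projections onto $\mathcal{V}^{1}$ and $\mathcal{V}^{2}$ are uniformly bounded, so an $\varepsilon$-jump moves each component by at most $C\varepsilon$; then along any $(\varepsilon,T)$-chain the unstable component satisfies $r_{k+1}\geq 2r_{k}-C\varepsilon$, hence escapes unless $r_{0}=O(\varepsilon)$, while the stable component contracts into an $O(\varepsilon)$-ball, and recurrence of $(b,x)$ forces $x=0$ as $\varepsilon\rightarrow0$. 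The paper takes a different (also terse) shortcut here: the chain recurrent set is invariant, its projections to the stable and unstable subbundles are invariant, and invariant sets surviving the uniform contraction (respectively expansion, in backward time) of (\ref{exp_sep})-type estimates must lie in the zero section. Either route is fine once written out; as it stands your step~1 is an appeal rather than an argument, whereas your compactness and chain transitivity steps are complete and essentially identical to the paper's.
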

	
	\begin{proof}
		For the linear flow $\Phi$ every chain recurrent point in the stable subbundle
		$\mathcal{V}^{1}$ is contained in the product $B\times\{0_{H}\}$, which is
		chain transitive, and the same holds for the unstable bundle $\mathcal{V}^{2}%
		$. Since $B\times H=\mathcal{V}^{1}\oplus\mathcal{V}^{2}$ it follows that the
		chain recurrent set of $\Phi$ is $\mathcal{R}=B\times\{0_{H}\}$. For the proof
		of these assertions, note that similar arguments as for Antunez, Mantovani,
		and Var\~{a}o \cite[Corollary 2.11]{Antunez} can be used, where hyperbolic
		linear operators on Banach spaces are considered. By Theorem \ref{Theorem5.1}
		\[
		h_{aff}(\mathcal{R})=\{(b,h_{aff}^{0}(b,x))\left\vert (b,x)\in\mathcal{R}%
		\right.  \}=\{(b,-e(b,0))\left\vert b\in B\right.  \}.
		\]
		Thus $h_{aff}(\mathcal{R})$ is compact since $B$ is compact and $e(\cdot,0)$
		is continuous.
		
		The map $h_{aff}$ is uniformly continuous: In fact, for $\varepsilon>0$ it
		follows by compactness of $B$ and continuity of $e(\cdot,0)$ that there is
		$\delta(\varepsilon)\in(0,\varepsilon/2)$ such that $d(b,b^{\prime}%
		)<\delta(\varepsilon)$ and $\left\Vert x-x^{\prime}\right\Vert <\delta
		(\varepsilon)$ implies
		\[
		\left\Vert x-e(b,0)-\left(  x^{\prime}-e(b^{\prime},0)\right)  \right\Vert
		\leq\left\Vert x-x^{\prime}\right\Vert +\left\Vert e(b,0)-e(b^{\prime
		},0)\right\Vert <\delta(\varepsilon)+\varepsilon/2<\varepsilon.
		\]
		Hence $d(b,x),(b^{\prime},x^{\prime}))<\delta(\varepsilon)$ implies
		$d(h_{aff}(b,x),h_{aff}(b^{\prime},x^{\prime}))<\varepsilon$. Analogously one
		proves that the inverse of $h_{aff}$ given by
		\[
		h_{aff}^{-1}(b,x)=(b,x+e(b,0))
		\]
		is uniformly continuous. Let $\varepsilon,T>0$ and consider $h(b,0_{H}%
		),h(b^{\prime},0_{H})\in h_{aff}(\mathcal{R})$ with $b,b^{\prime}\in B$. By
		chain transitivity of $B$ there is a $(\delta(\varepsilon),T)$-chain in
		$B\times\{0_{H}\}$ from $(b,0_{H})$ to $(b^{\prime},0_{H})$. Then $h_{aff}$
		maps it onto an $(\varepsilon,T)$-chain from $h_{aff}(b,0_{H})$ to
		$h_{aff}(b^{\prime},0_{H})$. Since $\varepsilon,T>0$ are arbitrary, this
		proves that $h_{aff}(\mathcal{R})$ is chain transitive.
		
		It remains to prove that $h_{aff}(\mathcal{R})$ is the chain recurrent set of
		$\Psi$. Let $\varepsilon>0$. By uniform continuity of $h_{aff}^{-1}$ there is
		$\delta^{\prime}(\varepsilon)>0$ such that $d(b,x),(b^{\prime},x^{\prime
		}))<\delta^{\prime}(\varepsilon)$ implies $d(h_{aff}^{-1}(b,x),h_{aff}%
		^{-1}(b^{\prime},x^{\prime}))<\varepsilon$. For any chain recurrent point
		$(b,x)$ of $\Psi$ and $T>0$ there is a $(\delta^{\prime}(\varepsilon
		),T)$-chain from $(b,x)$ to $(b,x)$. This is mapped by $h_{aff}^{-1}$ to an
		$(\varepsilon,T)$-chain of $\Phi$ from $h_{aff}^{-1}(b,x)$ to $h_{aff}%
		^{-1}(b,x)$. This proves that $h_{aff}^{-1}(b,x)\in\mathcal{R}$ and hence
		$(b,x)=h_{aff}(h_{aff}^{-1}(b,x))\in h_{aff}(\mathcal{R})$.
	\end{proof}
	
	Next we determine the Selgrade bundles and their Morse spectra.
	
	\begin{theorem}
		\label{Theorem_hyperbolic2}Suppose that $\Psi$ is a uniformly hyperbolic
		affine flow.
		
		(i) Then the Selgrade bundles of $\Psi^{1}$ are $\mathcal{V}_{i}^{\infty}%
		,i\in\{1,\ldots,\ell\}$, together with the central Selgrade bundle
		$\mathcal{V}_{c}^{1}$, which is the line bundle in $B\times H^{1}$ given by
		\begin{equation}
			\mathcal{V}_{c}^{1}=\{(b,-re(b,0),r)\in B\times H\times\mathbb{R}\left\vert
			b\in B,r\in\mathbb{R}\right.  \}. \label{Vstar2}%
		\end{equation}
		The projection $\mathcal{M}_{c}^{1}=\mathbb{P}\mathcal{V}_{c}^{1}$ to
		$B\times\mathbb{P}H^{1}$ is a compact subset of $B\times\mathbb{P}H^{1,1}$ and
		coincides with the image of the chain recurrent set of $\Psi$, i.e.,%
		\begin{equation}
			\mathcal{M}_{c}^{1}=\left\{  (b,\mathbb{P}(x,1))\left\vert (b,x)\in
			h_{aff}(\mathcal{R})\right.  \right\}  . \label{E_hyp}%
		\end{equation}

		(ii) The Morse spectra of the Selgrade bundles are%
		\[
		\Sigma_{Mo}(\mathcal{V}_{i}^{\infty};\Psi^{1})=\Sigma_{Mo}(\mathcal{V}%
		_{i};\Phi)\text{ for }i\in\left\{  1,\ldots,\ell\right\}  \text{ and }%
		\Sigma_{Mo}(\mathcal{V}_{c}^{1};\Psi^{1})=\{0\}.
		\]
		
	\end{theorem}
	
	\begin{proof}
		(i) By Theorem \ref{Theorem_hyperbolic1} the chain recurrent set of the affine
		flow $\Psi$ is $h_{aff}(\mathcal{R})=\{(b,-e(b,0))\left\vert b\in B\right.
		\}$, and it is compact and chain transitive. Denote by $\mathcal{V}_{\ast}%
		^{1}$ the right hand side of (\ref{Vstar2}). First we claim that
		$\mathcal{V}_{\ast}^{1}$ is a subbundle. The projection of $\mathcal{V}_{\ast
		}^{1}$ to $B\times\mathbb{P}H^{1}$ satisfies
		\begin{align*}
			\mathbb{P}\mathcal{V}_{\ast}^{1}  &  =\{(b,\mathbb{P}\left(  -e(b,0),1\right)
			)\in B\times H\times\mathbb{R}\left\vert b\in B\right.  \}\\
			&  =\{(b,\mathbb{P}\left(  x,1\right)  )\in B\times H\times\mathbb{R}%
			\left\vert (b,x)\in h_{aff}(\mathcal{R})\right.  \}=h^{1}(h_{aff}%
			(\mathcal{R})).
		\end{align*}
		By Proposition \ref{Proposition_e}(ii) the compact and chain transitive set
		$h_{aff}(\mathcal{R})$ is mapped to the compact set $\mathbb{P}\mathcal{V}%
		_{\ast}^{1}=h^{1}(h_{aff}(\mathcal{R}))\subset B\times H^{1,1}$ which is chain
		transitive for $\mathbb{P}\Psi^{1}$.
		
		For every $b\in B$ the fiber $\left\{  (b,-re(b,0),r),r\in\mathbb{R}\right\}
		$, is one dimensional and $\mathcal{V}_{\ast}^{1}$ is closed. In fact, suppose
		that a sequence $(b_{n},-r_{n}e(b_{n},0),r_{n}),n\in\mathbb{N\,}$ in
		$\mathcal{V}_{\ast}^{1}$ converges to $(b,x,r)\in B\times H\times\mathbb{R}$.
		Then $b_{n}\rightarrow b$ and $r_{n}\rightarrow r$, and by continuity of
		$e(\cdot,0)$ it follows that $r_{n}e(b_{n},0)\rightarrow re(b,0)$. This shows
		that $(b,x,r)=(b,-re(b,0),r)\in\mathcal{V}_{\ast}^{1}$. According to Colonius
		and Kliemann \cite[Lemma B.1.13]{ColK00} it follows that $\mathcal{V}_{\ast
		}^{1}$ is a one dimensional subbundle of $B\times H^{1}$.
		
		By Proposition \ref{Proposition_e}(i) the sets $\mathbb{P}\mathcal{V}%
		_{i}^{\infty}\subset B\times\mathbb{P}H^{1,0}$ are chain recurrent components
		of $\mathbb{P}\Psi^{1}$ restricted to $B\times\mathbb{P}H^{1,0}$, hence they
		are chain transitive for $\mathbb{P}\Psi^{1}$. Furthermore, the intersection
		satisfies%
		\[
		\mathcal{V}_{\ast}^{1}\cap\bigoplus\nolimits_{i=1}^{\ell}\mathcal{V}%
		_{i}^{\infty}=B\times\{\left(  0_{H},0\right)  \}\subset B\times H^{1},
		\]
		since $r=0$ implies $re(b,0)=0$. It follows that
		\begin{equation}
			\mathcal{V}_{\ast}^{1}\oplus\bigoplus\nolimits_{i=1}^{\ell}\mathcal{V}%
			_{i}^{\infty}=B\times H^{1}, \label{dec_hyp1}%
		\end{equation}
		since the fibers on the left hand side have dimension $d+1$. The sets
		$\mathbb{P}\mathcal{V}_{\ast}^{1}$ and $\mathbb{P}\mathcal{V}_{i}^{\infty}$
		are contained in chain recurrent components $\mathcal{M}^{1}$ and
		$\mathcal{M}_{j}^{1}$ with $j$ in some index set $J$, respectively, of
		$\mathbb{P}\Psi^{1}$. Lemma \ref{Lemma_dim1}(ii) implies that, actually, the
		sets $\mathbb{P}\mathcal{V}_{\ast}^{1}$ and $\mathbb{P}\mathcal{V}_{i}%
		^{\infty}$ are chain recurrent components, since otherwise the subbundles for
		$\mathcal{M}^{1}$ and $\mathcal{M}_{j}^{1}$ would satisfy
		\[
		\dim\left(  \mathcal{V}^{1}\oplus\bigoplus\nolimits_{j\in J}\mathcal{V}%
		_{j}^{1}\right)  >d+1=\dim(B\times H^{1}),
		\]
		which is a contradiction. It follows that $\mathcal{V}_{\ast}^{1}$ and
		$\mathcal{V}_{i}^{\infty}$ are Selgrade bundles, and $J=\{1,\ldots,\ell\}$.
		Thus (\ref{dec_hyp1}) is a decomposition into Selgrade bundles, and Theorem
		\ref{Theorem_affine1}(i) shows that $\mathcal{V}_{\ast}^{1}=\mathcal{V}%
		_{c}^{1}$. Since $\dim\mathcal{V}_{c}^{1}=1$, Theorem \ref{Theorem_affine1}%
		(iii) implies that $\mathcal{M}_{c}^{1}\subset B\times\mathbb{P}H^{1,1}$.
		Equality (\ref{E_hyp}) is a consequence of Theorem \ref{Theorem_hyperbolic1}.
		
		(ii) The assertion for the Selgrade bundles $\mathcal{V}_{i}^{\infty}$ follows
		by Theorem \ref{Theorem_spectrum}(iii). For the central Selgrade bundle
		$\mathcal{V}_{c}^{1}$ equality (\ref{Vstar2}) implies that the projection to
		the projective bundle is%
		\[
		\mathbb{P}\mathcal{V}_{c}^{1}=\{(b,\mathbb{P}(-e(b,0),1)\left\vert b\in
		B\right.  \}.
		\]
		Consider an $(\varepsilon,T)$ chain in $\mathbb{P}\mathcal{V}_{c}^{1}$ given
		by $T_{0},\ldots,T_{n-1}\geq T$, and $(b_{0},p_{0}),\ldots,\allowbreak
		(b_{n},p_{n})\in B\times\mathbb{P}\mathcal{V}_{c}^{1}$ with $d(\mathbb{P}%
		\Phi(T_{i},b_{i},p_{i}),(b_{i+1},p_{i+1}))<\varepsilon$ for $i=0,\ldots,n-1.$
		Then $p_{i}=\mathbb{P}(e(b_{i},0),1)$ and with total time $\tau=\sum
		_{i=0}^{n-1}T_{i}$ the exponential growth rate of $\zeta$ is
		\[
		\lambda(\zeta)=\frac{1}{\tau}\sum_{i=0}^{n-1}\left(  \log\left\Vert \Psi
		^{1}(T_{i},-e(b_{i},0),1)\right\Vert -\log\left\Vert (e(b_{i},0),1)\right\Vert
		\right)  .
		\]
		By definition and Theorem \ref{Theorem5.1}(i)%
		\begin{equation}
			\Psi^{1}(T_{i},-e(b_{i},0),1)=(\theta_{T_{i}}b_{i},-e(b_{i},T_{i}),1).
			\label{bdd}%
		\end{equation}
		Recall that by assumption $\left\Vert f(b)\right\Vert _{\infty}\leq M$ for all
		$b\in B$. This implies that the bounded solutions $e(b,t),t\in\mathbb{R}$, are
		uniformly bounded for $b\in B$ (cf. Colonius and Santana \cite[formula (13)
		and Corollary 1]{ColSan11}. Thus by (\ref{bdd}) also $\left\Vert \Psi
		^{1}(T_{i},e(b_{i},0),1)\right\Vert $ is uniformly bounded. It follows that
		for $T$ large enough and $T_{i}>T$%
		\begin{align*}
			\lambda(\zeta)  &  =\sum_{i=0}^{n-1}\frac{T_{i}}{\sum_{j=0}^{n}T_{j}}\frac
			{1}{T_{i}}\left(  \log\left\Vert \Psi^{1}(T_{i},-e(b_{i},0),1)\right\Vert
			-\log\left\Vert (-e(b_{i},0),1)\right\Vert \right) \\
			&  \leq\sum_{i=0}^{n-1}\frac{T_{i}}{\sum_{j=0}^{n-1}T_{j}}\varepsilon
			=\varepsilon.
		\end{align*}
		Since $\varepsilon>0$ is arbitrary, it follows that $\Sigma_{Mo}%
		(\mathcal{V}_{c}^{1};\Psi^{1})=\{0\}$.
	\end{proof}
	
	\begin{remark}
		For a linear uniformly hyperbolic flow $\Phi$ the bounded solutions are given
		by $(\theta_{t}b,0_{H}),t\in\mathbb{R}$, hence the central Selgrade bundle of
		the lift $\Phi^{1}$ coincides with the polar bundle $\mathcal{P}$ (cf.
		(\ref{P}))
		\[
		\mathcal{V}_{c}^{1}=\{(b,0_{H},r)\in B\times H^{1}\left\vert b\in
		B,r\in\mathbb{R}\right.  \}=\mathcal{P}.
		\]
		
	\end{remark}
	
	\section{Control systems and examples\label{Section6}}
	
	In this section we study control systems which provide a rich class of affine
	flows. After introducing some notation for control systems, the existence and
	uniqueness of chain control sets in $\mathbb{R}^{d}$ is analyzed. Then we
	apply the results of the previous sections to affine control flows defined by
	affine control systems with bounded control range and present several examples.
	
	\subsection{Control systems}
	
	Control-affine systems have the form
	\begin{equation}
		\dot{x}(t)=X_{0}(x(t))+\sum_{i=1}^{m}u_{i}(t)X_{i}(x(t)),\,u(t)=(u_{1}%
		(t),\ldots,u_{m}(t))\in\Omega,\label{control1}%
	\end{equation}
	where $X_{0},X_{1},\ldots,X_{m}$ are smooth ($C^{\infty}$-)vector fields on a
	manifold $M$ and $\Omega\subset\mathbb{R}^{m}$. We assume that for every
	admissible control $u$ in
	\[
	\mathcal{U}:=\{u\in L^{\infty}(\mathbb{R},\mathbb{R}^{m})\left\vert
	u(t)\in\Omega\mbox{
		for almost all }t\right.  \}
	\]
	and every initial state $x(0)=x_{0}\in M$ there exists a unique
	(Carath\'{e}odory) solution $\psi(t,x_{0},u),t\in\mathbb{R}$.
	
	Suppose that the control range $\Omega\subset\mathbb{R}^{m}$ is a convex and
	compact neighborhood of $0\in\mathbb{R}^{m}$, endow the set $\mathcal{U}$ of
	controls with a metric compatible with the weak$^{\ast}$ topology on
	$L^{\infty}(\mathbb{R},\mathbb{R}^{m})$, and fix a metric (compatible with the
	topology) on $M$. The control flow is defined as $\Psi:\mathbb{R}%
	\times\mathcal{U}\times M\rightarrow\mathcal{U}\times M,\,(t,u,x_{0}%
	)\mapsto(u(t+\cdot),\psi(t,x_{0},u))$, where $u(t+\cdot)(s):=u(t+s),s\in
	\mathbb{R}$, is the right shift. The control flow $\Psi$ is continuous and
	$\mathcal{U}$ is compact and chain transitive; cf. Colonius and Kliemann
	\cite[Chapter 4]{ColK00} or Kawan \cite[Section 1.4]{Kawa13}.
	
	Maximal chain transitive sets of a control flow enjoy a characterization in
	the state space $M$ of the control system. Fix $x,y\in M$ and let
	$\varepsilon,T>0.$ A controlled $(\varepsilon,T)$\textit{-}chain $\zeta$ from
	$x$ to $y$ is given by $n\in\mathbb{N},\ x_{0}=x,\ldots,x_{n-1},x_{n}=y\in
	M,\ u_{0},\ldots,u_{n-1}\in\mathcal{U}$, and $T_{0},\ldots,T_{n-1}\geq T$
	with
	\[
	d(\psi(T_{j},x_{j},u_{j}),x_{j+1})<\varepsilon\mbox{ }\,\mbox{for\thinspace
		all}\,\,\,j=0,\ldots,n-1.
	\]
	Define a chain control set of system (\ref{control1}) as a maximal nonvoid set
	$E\subset M$ such that (i) for all $x\in E$ there is $u\in\mathcal{U}$ such
	that $\psi(t,x,u)\in E$ for all $t\in\mathbb{R}$ and (ii) for all $x,y\in E$
	and $\varepsilon,\,T>0$ there is a controlled $(\varepsilon,T)$-chain from $x$
	to $y$.
	
	For control affine systems of the form above, \cite[Proposition 1.24]{Kawa13}
	shows that a chain control set $E$ yields a maximal invariant chain transitive
	set $\mathcal{E}$ of the control flow $\Psi$ via
	\begin{equation}
		\mathcal{E}:=\{(u,x)\in\mathcal{U}\times M\left\vert \psi(t,x,u)\in E\mbox{
			for all }t\in\mathbb{R}\right.  \}, \label{chain_transitive1}%
	\end{equation}
	and for any maximal invariant chain transitive set in $\mathcal{U}\times M$
	the projection to $M$ is a chain control set.
	
	\subsection{Affine control systems}
	
	General affine control system have the form
	\begin{equation}
		\dot{x}(t)=A_{0}x(t)+a_{0}+\sum\nolimits_{i=1}^{m}u_{i}(t)[A_{i}%
		x(t)+a_{i}],\,u\in\mathcal{U}, \label{control_affine}%
	\end{equation}
	where $A_{i}\in\mathbb{R}^{d\times d},a_{i}\in\mathbb{R}^{d},i\in
	\{0,1,\ldots,m\}$. If the control range $\Omega\subset\mathbb{R}^{m}$ is a
	convex and compact neighborhood of $0\in\mathbb{R}^{m}$, the system generates
	an affine control flow $\Psi$ on $\mathcal{U}\times\mathbb{R}^{d}$. We also
	consider the following special case.
	
	\begin{definition}
		Split affine control systems have the form
		\begin{equation}
			\dot{x}(t)=\left[  A_{0}+\sum\nolimits_{i=1}^{p}v_{i}(t)A_{i}\right]
			x(t)+Bu(t)=A(v(t))x(t)+Bu(t), \label{qaffine1}%
		\end{equation}
		where $A_{0},A_{1},\ldots,A_{p}\in\mathbb{R}^{d\times d},A(v):=A_{0}%
		+\sum\nolimits_{i=1}^{p}v_{i}A_{i}$ for $v\in\mathbb{R}^{p}$, and
		$B\in\mathbb{R}^{d\times m}$. The set of admissible controls is
		\[
		\mathcal{U}_{1}\times\mathcal{U}_{2}=\{(u,v)\in L^{\infty}(\mathbb{R}%
		,\mathbb{R}^{m})\times L^{\infty}(\mathbb{R},\mathbb{R}^{p})\left\vert
		u(t)\in\Omega_{1},v(t)\in\Omega_{2}\text{ for }t\in\mathbb{R}\right.  \},
		\]
		where $\Omega_{1}\subset\mathbb{R}^{m}$ and $\Omega_{2}\subset\mathbb{R}^{p}$.
	\end{definition}
	
	Split affine control systems are affine control systems: Define $A_{i}%
	^{\prime}:=0$ for $i=1,\ldots,m$, and $u_{m+i}:=v_{i}$ and $A_{m+i}^{\prime
	}=A_{i}$ for $i=1,\ldots,p$. Furthermore, denote the columns of $B$ by
	$a_{i}^{\prime},i=1,\ldots,m$, and let $a_{i}^{\prime}:=0,i=m+1,\ldots
	,\allowbreak m+p$. Then, with $A_{0}^{\prime}:=A_{0}$ and $a_{0}^{\prime}:=0$,
	system equation (\ref{qaffine1}) is equivalent to
	\[
	\dot{x}(t)=A_{0}^{\prime}x(t)+a_{0}^{\prime}+\sum_{i=1}^{m+p}u_{i}(t)\left[
	A_{i}^{\prime}x(t)+a_{i}^{\prime}\right]
	\]
	with controls in $\mathcal{U}:=\{u\in L^{\infty}(\mathbb{R},\mathbb{R}%
	^{m+p})\left\vert u(t)\in\Omega:=\Omega_{1}\times\Omega_{2}\text{ for }%
	t\in\mathbb{R}\right.  \}$.
	
	The following theorem presents results on existence and uniqueness of chain
	control sets for split affine control systems in $\mathbb{R}^{d}$. The
	considered systems may not generate a control flow, since the assumptions on
	the control range are more general. Thus a chain control set need not be
	related to a chain transitive component of a flow.
	
	\begin{theorem}
		\label{Theorem_unique}For every split affine control system of the form
		(\ref{qaffine1}), where $0\in\Omega_{2}$ and the control range $\Omega_{1}$ is
		a convex neighborhood of $0\in\mathbb{R}^{m}$, there exists a unique chain
		control set $E$ in $\mathbb{R}^{d}$.
	\end{theorem}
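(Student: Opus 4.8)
The plan is to separate existence from uniqueness and to reduce uniqueness to the single assertion that the origin $0\in\mathbb R^d$ lies in every chain control set. For existence I first note that, since $0\in\Omega_1$ and $0\in\Omega_2$, the constant control $(u,v)\equiv 0$ keeps the origin fixed (indeed $\dot x=A(v)\,0+B\cdot 0=0$), so the singleton $\{0\}$ satisfies conditions (i) and (ii) of the definition of a chain control set. The union of any family of sets satisfying (i) and (ii) that is totally ordered by inclusion again satisfies them — (i) pointwise, and (ii) because any two points lie in a common member — so Zorn's lemma produces a maximal such set containing $0$, i.e.\ a chain control set. For uniqueness I use that two distinct chain control sets are disjoint: if $E_1\cap E_2\neq\varnothing$, then $E_1\cup E_2$ still satisfies (i) and (ii), obtaining chains by concatenation through a common point, which contradicts maximality. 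Hence it suffices to show that $0$ belongs to every chain control set.

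The decisive tool is a scaling lemma that exploits the split structure. For a frozen control $v$ the system $\dot x=A(v)x+Bu$ is jointly linear in $(x,u)$, so $\psi(t,\lambda x_0,\lambda u,v)=\lambda\,\psi(t,x_0,u,v)$ for every scalar $\lambda$. Consequently, given a controlled $(\varepsilon,T)$-chain from $a$ to $b$ with controls $(u_i,v_i)$, scaling the points by $\lambda$ and the additive controls to $\lambda u_i$ (leaving $v_i$ unchanged) yields a controlled $(\lambda\varepsilon,T)$-chain from $\lambda a$ to $\lambda b$. For $\lambda\in(0,1]$ the scaled controls remain admissible because $\Omega_1$ is convex with $0\in\Omega_1$, so $\lambda u_i=\lambda u_i+(1-\lambda)0\in\Omega_1$, while $v_i\in\Omega_2$ is untouched. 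Thus chain reachability is preserved under simultaneous contraction of state and additive control.

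Now let $E$ be any chain control set and $x\in E$. By (ii) there is, for each $\varepsilon,T>0$, a controlled $(\varepsilon/2,T)$-chain from $x$ to $x$, which I regard as one loop $y_0=x,\dots,y_n=x$. The plan is to build a spiralling chain by stacking scaled copies of this loop with slowly varying scale: lap $j$ uses scale $\mu_j$, contributing points $\mu_j y_i$ with controls $(\mu_j u_i,v_i)$, whose internal jumps are $<\mu_j\varepsilon/2\le\varepsilon$ by the scaling lemma. Passing from lap $j$ to lap $j+1$ one lands near $\mu_j x$ and designates the next point $\mu_{j+1}x$; the main obstacle is keeping this transition jump below $\varepsilon$, and this is exactly where I use that consecutive scales may be chosen close, since the jump is at most $\mu_j\varepsilon/2+|\mu_j-\mu_{j+1}|\,\|x\|<\varepsilon$ as soon as $|\mu_j-\mu_{j+1}|\,\|x\|<\varepsilon/2$. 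Letting $\mu_j$ decrease from $1$ to a small value in finitely many such steps drives the loop into an arbitrarily small neighbourhood of $0$, and a final step, flowing from $\mu_N x$ under $(u,v)\equiv 0$ and jumping to $0$ (which is within $\varepsilon$ for $\mu_N$ small, as $0$ is an equilibrium), exhibits a controlled chain from $x$ to $0$. Running the same construction with scales increasing from a small value up to $1$, preceded by an initial step out of the fixed point $0$ to $\mu_0 x$, exhibits a controlled chain from $0$ to $x$. Hence $0$ and $x$ are mutually chain reachable for every $x\in E$, so $E\cup\{0\}$ satisfies (i) and (ii); maximality forces $0\in E$. Combined with disjointness of distinct chain control sets, this shows that exactly one chain control set exists.
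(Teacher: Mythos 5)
Your proof is correct, and its first half is essentially the paper's argument: your scaling lemma $\psi(t,\lambda x_0,\lambda u,v)=\lambda\psi(t,x_0,u,v)$, with admissibility of $\lambda u$ for $\lambda\in(0,1]$ from convexity of $\Omega_1$ and $0\in\Omega_1$, is exactly the paper's identity (\ref{alpha}), and your downward spiral from $x$ to $0$ reproduces the paper's Steps 1--2, with additive scale decrements $|\mu_j-\mu_{j+1}|\,\|x\|<\varepsilon/2$ in place of the paper's geometric scales $\alpha^k$, $(1-\alpha)\|x\|<\varepsilon/2$ --- a cosmetic difference. Where you genuinely diverge is the return direction: the paper builds the controlled chain from $0$ to $x$ by passing to the time-reversed system (\ref{time_reversed}), applying its Step 1 there, and reading the reversed chain backwards (Step 3), which needs the trajectory identity (\ref{rev1}) and the fact that time reversal preserves chain control sets (via \cite[Proposition 3.1.13(ii)]{ColK14} or a direct argument); you instead run the same spiral upwards, with scales increasing through $(0,1]$ from a small $\mu_0$ to $1$, starting with a segment resting at the equilibrium $0$ before a jump of size $\mu_0\|x\|<\varepsilon$. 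This works for the same reasons as the downward direction --- all scales stay in $(0,1]$ so admissibility never fails, internal jumps of lap $j$ are $<\mu_j\varepsilon/2\le\varepsilon/2$, and transition jumps are $<\mu_j\varepsilon/2+(\mu_{j+1}-\mu_j)\|x\|<\varepsilon$ --- and it buys a shorter, symmetric argument avoiding time reversal altogether; the paper's route, in exchange, isolates the reusable fact that chain control sets are invariant under time reversal. Your explicit reduction of uniqueness to ``$0$ lies in every chain control set'' via disjointness and maximality, and the Zorn argument for existence, are correct fillings-in of what the paper leaves implicit. One small repair: your final jump into $0$ is justified not merely because ``$0$ is an equilibrium'' but by the scaling identity with fixed time $T$, namely $\|\psi(T,\mu_N x,0,0)\|=\mu_N\|\psi(T,x,0,0)\|<\varepsilon$ for $\mu_N$ small, which is exactly how the paper closes its chain as well.
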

	
	\begin{proof}
		First note that for $u\equiv0$ the origin $0\in\mathbb{R}^{d}$ is an
		equilibrium, hence there exists a chain control set $E$ with $0\in E$. The
		trajectories $x(t)=\psi(t,x_{0},u,v),t\in\mathbb{R}$, of (\ref{qaffine1})
		satisfy for $\alpha\in(0,1)$
		\[
		\alpha\dot{x}(t)=A_{0}\alpha x(t)+\sum\nolimits_{i=1}^{m}v_{i}(t)\alpha
		x(t)+B\alpha u(t).
		\]
		It follows that
		\begin{equation}
			\psi(t,\alpha x_{0},\alpha u,v)=\alpha\psi(t,x_{0},u,v),t\in\mathbb{R},
			\label{alpha}%
		\end{equation}
		and $\psi(\cdot,\alpha x_{0},\alpha u,v)$ is a trajectory of (\ref{qaffine1}),
		since $\Omega_{1}$ is a convex neighborhood of $0\in\mathbb{R}^{m}$ implying
		that the controls $\alpha u$ are in $\mathcal{U}_{1}$.
		
		Suppose that $E^{\prime}$ is any chain control set and let $x\in E^{\prime}$.
		First we will construct controlled $(\varepsilon,T)$-chains from $x$ to $0\in
		E$.
		
		\textbf{Step 1:} There is a controlled $(\varepsilon,T)$-chain from $x$ to
		$\alpha x$ for some $\alpha\in(0,1)$.
		
		For the proof consider a controlled $(\varepsilon/2,T)$-chain $\zeta$ in
		$E^{\prime}$ from $x$ to $x$ given by $x_{0}=x,x_{1},\ldots,x_{n}
		=x,\,(u_{0},v_{0})\ldots,(u_{n-1},v_{n-1})\in\mathcal{U}_{1}\times
		\mathcal{U}_{2}$, and $T_{0},\ldots,T_{n-1}>T$ with
		\[
		\left\Vert \psi(T_{i},x_{i},u_{i},v_{i})-x_{i+1}\right\Vert <\varepsilon
		/2\mbox{ for }i=0,\ldots,n-1.
		\]
		Let $\alpha\in(0,1)$ with $(1-\alpha)\left\Vert x\right\Vert <\varepsilon/2$,
		hence
		\[%
		\begin{array}
			[c]{l}%
			\left\Vert \psi(T_{n-1},x_{n-1},u_{n-1},v_{n-1})-\alpha x_{n}\right\Vert \\
			\leq\left\Vert \psi(T_{n-1},x_{n-1},u_{n-1},v_{n-1})-x\right\Vert +\left\Vert
			x-\alpha x\right\Vert <\varepsilon.
		\end{array}
		\]
		This defines a controlled $(\varepsilon,T)$-chain $\zeta^{(1)}$ from $x$ to
		$\alpha x$.
		
		\textbf{Step 2: }Replacing $x_{i}$ by $\alpha x_{i}$ and $u_{i}$ by $\alpha
		u_{i}$ for all $i$ we get by (\ref{alpha})
		\[
		\left\Vert \psi(T_{i},\alpha x_{i},\alpha u_{i},v_{i})-\alpha x_{i+1}
		\right\Vert =\alpha\left\Vert \psi(T_{i},x_{i},u_{i},v_{i})-x_{i+1}\right\Vert
		<\varepsilon/2
		\]
		and
		\[%
		\begin{array}
			[c]{l}%
			\left\Vert \psi(T_{n-1},\alpha x_{n-1},\alpha u_{n-1},v_{n-1})-\alpha
			^{2}x\right\Vert \\
			\leq\left\Vert \psi(T_{n-1},\alpha x_{n-1},\alpha u_{n-1},v_{n-1})-\alpha
			x\right\Vert +\left\Vert \alpha x-\alpha^{2}x\right\Vert <\varepsilon.
		\end{array}
		\]
		This defines a controlled $(\varepsilon,T)$-chain $\zeta^{(2)}$ from $\alpha
		x$ to $\alpha^{2}x$. The concatenation of $\zeta^{(2)}$ and $\zeta^{(1)}$
		yields a controlled $(\varepsilon,T)$-chain $\zeta^{(2)}\circ\zeta^{(1)}$ from
		$x$ to $\alpha^{2}x$.
		
		Repeating this construction, we find that the concatenation $\zeta^{(k)}%
		\circ\cdots\circ\zeta^{(1)}$ is a controlled $(\varepsilon,T)$-chain from
		$x\in E^{\prime}$ to $\alpha^{k}x$. Since $\alpha^{k}\rightarrow0$ for
		$k\rightarrow\infty$, we can take $k\in\mathbb{N}$ large enough, such that the last piece of the chain $\zeta^{(k)}$ satisfies%
		\[
		\left\Vert \psi(T_{n-1},\alpha^{k-1}x_{n-1},\alpha^{k-1}u_{n-1},v_{n-1}%
		)\right\Vert <\varepsilon.
		\]
		Thus we may take $0\in E$ as the final point of this controlled chain showing
		that the concatenation $\zeta^{(k)}\circ\cdots\circ\zeta^{(1)}$ define a
		controlled $(\varepsilon,T)$-chain from $x\in E^{\prime}$ to $0\in E$.
		
		\textbf{Step 3:} Together with (\ref{qaffine1}) we consider the time reversed
		system
		\begin{equation}
			\dot{y}(t)=-\left[  A_{0}+\sum\nolimits_{i=1}^{p}v_{i}(t)A_{i}\right]
			y(t)-Bu(t),\quad(u,v)\in\mathcal{U}_{1}\times\mathcal{U}_{2},
			\label{time_reversed}%
		\end{equation}
		with trajectories $\psi^{-}(t,y,u,v),t\in\mathbb{R}$. For $S>0$ and
		$z:=\psi^{-}(S,y,u,v)$ the trajectories are related by
		\begin{equation}
			\psi^{-}(t,y,u,v)=\psi(S-t,z,u(S-\cdot),v(S-\cdot))\mbox{ for }t\in
			\lbrack0,S]. \label{rev1}%
		\end{equation}
		This holds, since the right hand side of (\ref{rev1}) satisfies
		\[%
		\begin{array}
			[c]{l}%
			\frac{d}{dt}\psi(S-t,z,u(S-\cdot),v(S-\cdot))=-\dot{\psi}(S-t,z,u(S-\cdot
			),v(S-\cdot))\\
			=-\left[  A_{0}+\sum\nolimits_{i=1}^{p}v_{i}(S-t)A_{i}\right]  \psi
			(S-t,z,u(S-\cdot),v(S-\cdot))-Bu(S-t)
		\end{array}
		\]
		with $\psi(S-S,z,u(S-\cdot),v(S-\cdot))=z$.
		
		The chain control sets of the time reversed system coincide with the chain
		control sets of the original system. Using the relation
		(\ref{chain_transitive1}) of chain control sets and maximal chain transitive
		sets, this follows from the fact that chain transitive sets are invariant
		under time reversal (Colonius and Kliemann \cite[Proposition 3.1.13(ii)]%
		{ColK14}) or it can be proved directly (using similar arguments as below).
		
		The result from Step 2 can be applied to the time reversed system
		(\ref{time_reversed}) and yields controlled $(\varepsilon,T)$-chains from
		$x\in E^{\prime}$ to $0\in E$. Let $\zeta^{-}$ be such a chain, given by
		$y_{0}=x,y_{1},\ldots,y_{n-1},y_{n}=0,\,(u_{0},v_{0}),\ldots,(u_{n-1}%
		,v_{n-1})\in\mathcal{U}_{1}\times\mathcal{U}_{2}$, and $T_{0},\ldots
		,T_{n-1}>T$ with
		\[
		\left\Vert \psi^{-}(T_{i},y_{i},u_{i},v_{i})-y_{i+1}\right\Vert <\varepsilon
		\mbox{ for }i=0,\ldots,n-1.
		\]
		Define a controlled $(\varepsilon,T)$-chain $\zeta$ for (\ref{control_linear})
		by going backwards in $\zeta^{-}$: The point $0\in\mathbb{R}^{d}$ is an
		equilibrium for control $u=0,v=0$, hence define $x_{0}=0,u_{0}=0,v_{0}%
		=0,T_{0}>T$, and for $i=1,\ldots,n$
		\[%
		\begin{array}
			[c]{l}%
			T_{i}^{\ast}:=T_{n-i},\,x_{i}:=\psi^{-}(T_{n-i},y_{n-i},u_{n-i},v_{n-i}%
			),\label{rev2}\\
			u_{i}^{\ast}(t):=u_{n-i}(T_{n-i}-t),\,\,v_{i}^{\ast}(t):=v_{n-i}%
			(T_{n-i}-t),t\in\lbrack0,T_{n-i}],
		\end{array}
		\]
		and let $x_{n+1}:=x$. This defines a controlled $(\varepsilon,T)$-chain from
		$x_{0}=0\in E$ to $x_{n+1}=x\in E^{\prime}$, since for $i=1,\ldots,n-1$,
		\[
		\left\Vert \psi(T_{i}^{\ast},x_{i},u_{i}^{\ast},v_{i}^{\ast})-x_{i+1}%
		\right\Vert =\left\Vert y_{n-i}-\psi^{-}(T_{n-i-1},y_{n-i-1},u_{n-i-1}%
		,v_{n-i-1})\right\Vert <\varepsilon.
		\]
		Together with Step 2 it follows that the chain control sets $E$ and
		$E^{\prime}$ coincide.
	\end{proof}
	
	Next we illustrate the results from Section \ref{Section3} on the Selgrade
	decomposition by the simplest case of autonomous differential equations.
	
	\begin{example}
		\label{Example6.1}Consider the autonomous affine differential equation
		$\dot{x}(t)=Ax(t)+a$ with $A\in\mathbb{R}^{d\times d}$ and $a\in\mathbb{R}%
		^{d}$. Here subbundles are just subspaces. The Selgrade subspaces of the
		linear part $\dot{x}=Ax$ are the Lyapunov spaces $L(\lambda_{i})$, which are
		the sums of the generalized real eigenspaces for eigenvalues $\mu$ with real
		part $\lambda_{i}$. The lifted system in $\mathbb{R}^{d}\times\mathbb{R}$ is
		described by
		\begin{equation}
			\left(
			\begin{array}
				[c]{c}%
				\dot{x}(t)\\
				\dot{z}(t)
			\end{array}
			\right)  =\left(
			\begin{array}
				[c]{cc}%
				A & a\\
				0 & 0
			\end{array}
			\right)  \left(
			\begin{array}
				[c]{c}%
				x(t)\\
				z(t)
			\end{array}
			\right)  . \label{aut_lift}%
		\end{equation}
		For the lifted system the eigenvalues are given by the eigenvalues of $A$
		together with the additional eigenvalue $\mu=0$. With the Lyapunov spaces at
		infinity $L(\lambda_{i})^{\infty}:=L(\lambda_{i})\times\{0\}$ the Selgrade
		decomposition has the form
		\[
		\mathbb{R}^{d+1}=L(\lambda_{1})\oplus\cdots\oplus L(\lambda_{\ell^{+}})\oplus
		L_{c}^{1}\oplus L(\lambda_{\ell^{+}+\ell^{0}+1})\oplus\cdots\oplus
		L(\lambda_{\ell});
		\]
		here $\lambda_{i}<0$ for $i\in\{1,\ldots,\ell^{+}\}$ and $\lambda_{i}>0$ for
		$i\in\{\ell^{+}+\ell^{0}+1,\ldots,\ell\}$ \ The number $\ell^{0}=0$ if and
		only if $A$ is hyperbolic and $\ell^{0}=1$ otherwise. The subspace $L_{c}^{1}$
		is the Lyapunov space for the Lyapunov exponent $\lambda=0$. In particular, if
		$A$ is hyperbolic, the unique bounded solution is the equilibrium
		$x_{0}=-A^{-1}a$, and by Theorem \ref{Theorem_hyperbolic2} the central
		Selgrade subspace is
		\[
		L_{c}^{1}=\{(rx_{0},r)\in\mathbb{R}^{d}\times\mathbb{R}\left\vert
		r\in\mathbb{R}\right.  \}\mbox{ with }\dim L_{c}^{1}=1.
		\]
		
	\end{example}
	
	\begin{remark}
		An in-depth analysis of nonautonomous affine differential equations is given
		in the classical treatise by Massera and Sch\"{a}ffer \cite{MasSch}.
	\end{remark}
	
	An application of Theorem \ref{Theorem_affine1} and Theorem
	\ref{Theorem_hyperbolic2} to affine control system (\ref{control_affine}) and
	the associated affine control flow $\Psi$ yields the following results. The
	map $f:\mathcal{U}\rightarrow L^{\infty}(\mathbb{R},\mathbb{R}^{m})$ is given
	by $f(u)(t)=a_{0}+\sum\nolimits_{i=1}^{m}u_{i}(t)a_{i},t\in\mathbb{R}$, and
	the linear part $\Phi$ of $\Psi$ is the linear control flow associated with
	the bilinear control system
	\begin{equation}
		\dot{x}(t)=\left[  A_{0}+\sum\nolimits_{i=1}^{m}u_{i}(t)A_{i}\right]
		x(t),\,u\in\mathcal{U}. \label{bilinear}%
	\end{equation}

	\begin{corollary}
		\label{Corollary_affine1}Consider an affine control system of the form
		(\ref{control_affine}), where the control range $\Omega$ is a convex and
		compact neighborhood of $0\in\mathbb{R}^{m}$, and denote by $\Psi$ the
		associated affine control flow on $\mathcal{U}\times\mathbb{R}^{d}$. For
		$i\in\{1,\ldots,\ell\}$ let $\mathcal{V}_{i}\subset\mathcal{U}\times
		\mathbb{R}^{d}$ be the Selgrade bundles of the linear flow $\Phi$ associated
		with control system (\ref{bilinear}), and let $\mathcal{V}_{i}^{\infty
		}=\mathcal{V}_{i}\times\{0\}$.
		
		(i) The Selgrade decomposition of the lifted flow $\Psi^{1}$ has the form
		\begin{equation}
			\mathcal{U}\times\mathbb{R}^{d+1}=\mathcal{V}_{1}^{\infty}\oplus\cdots
			\oplus\mathcal{V}_{\ell^{+}}^{\infty}\oplus\mathcal{V}_{c}^{1}\oplus
			\mathcal{V}_{\ell^{+}+\ell^{0}+1}^{\infty}\oplus\cdots\oplus\mathcal{V}_{\ell
			}^{\infty}, \label{Sel4}%
		\end{equation}
		for some numbers $\ell^{+},\ell^{0}\geq0$ with $\ell^{+}+\ell^{0}\leq\ell$.
		
		(ii) The central Selgrade bundle $\mathcal{V}_{c}^{1}$ satisfies
		\[
		\mathcal{V}_{c}^{1}\cap\left(  \mathcal{U}\times\mathbb{R}^{d}\times
		\{0\}\right)  =\bigoplus_{i=\ell^{+}+1}^{i=\ell^{+}+\ell^{0}}\mathcal{V}%
		_{i}^{\infty}:=\mathcal{V}_{c}^{\infty}\mbox{ and }
		\]

		(iii) The dimension of $\mathcal{V}_{c}^{1}$ is given by $\dim\mathcal{V}
		_{c}^{1}=1+\dim\mathcal{V}_{c}^{\infty}$, and $\dim\mathcal{V}_{c}^{1}=1$
		holds if and only if $\mathcal{V}_{c}^{1}\cap\left(  \mathcal{U}
		\times\mathbb{R}^{d}\times\{0\}\right)  =\mathcal{U}\times\{0\}\times\{0\}$.
		
		(iv) If (\ref{bilinear}) is uniformly hyperbolic, the central Selgrade bundle
		is the line bundle
		\begin{equation}
			\mathcal{V}_{c}^{1}=\{(u,-re(u,0),r)\in\mathcal{U}\times\mathbb{R}^{d}
			\times\mathbb{R}\left\vert u\in\mathcal{U},r\in\mathbb{R}\right.  \},
			\label{6.7}%
		\end{equation}
		where $e(u,t),t\in\mathbb{R}$, is the unique bounded solution of
		(\ref{control_affine}) for $u\in\mathcal{U}$, and $\mathcal{M}_{c}^{1}
		\subset\mathcal{U}\times\mathbb{P}^{d,1}$.
	\end{corollary}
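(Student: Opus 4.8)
The plan is to read the corollary as the specialization of Theorem \ref{Theorem_affine1} and Theorem \ref{Theorem_hyperbolic2} to the affine control flow $\Psi$ on $\mathcal{U}\times\mathbb{R}^{d}$, so that the only real work is checking that the control-theoretic data meet the hypotheses of those theorems. First I would take the base space to be $B:=\mathcal{U}$, which is compact and chain transitive by the facts recalled in Subsection \ref{Subsection2.5}. As noted just before the corollary, the inhomogeneity is $f(u)(t)=a_{0}+\sum_{i=1}^{m}u_{i}(t)a_{i}$ and the linear part $\Phi$ is the linear control flow of the bilinear system (\ref{bilinear}); a one-line computation using that $\theta_{s}u=u(s+\cdot)$ is the shift shows $f(u,t+s)=f(\theta_{s}u,t)$, so (\ref{CPAA3}) holds and $\Psi$ is an affine flow in the sense of Definition \ref{Definition_affine} with linear part $\Phi$. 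Granting this, parts (i), (ii) and (iii) are exactly Theorem \ref{Theorem_affine1}(i)--(iii) transcribed for $H=\mathbb{R}^{d}$: the decomposition (\ref{Sel4}) is (\ref{Sel1}), the intersection statements and the chain-transitivity implication in (ii) are the corresponding statements of Theorem \ref{Theorem_affine1}(ii), and the dimension formula in (iii) is Theorem \ref{Theorem_affine1}(iii).

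For part (iv) I would show that uniform hyperbolicity of (\ref{bilinear}) makes $\Psi$ a uniformly hyperbolic affine flow in the sense of the definition preceding Theorem \ref{Theorem5.1}, and then apply Theorem \ref{Theorem_hyperbolic2}. Condition (i) of that definition is an exponential dichotomy $\mathcal{U}\times\mathbb{R}^{d}=\mathcal{V}^{1}\oplus\mathcal{V}^{2}$ for the linear part $\Phi$, which is precisely uniform hyperbolicity of the bilinear flow. The first half of condition (ii), the uniform bound $\|f(u)\|_{\infty}\le M$, is immediate from compactness of $\Omega$, since $\|f(u)(t)\|\le\|a_{0}\|+\sum_{i=1}^{m}\big(\max_{w\in\Omega}|w_{i}|\big)\|a_{i}\|$.

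The main obstacle, and the only step that is not purely formal, is the second half of condition (ii): continuity on $\mathcal{U}$ of the two improper integral maps appearing there. Here I would use the exponential decay furnished by the dichotomy to dominate the integrands by a fixed integrable exponential (for $s\le0$ the stable estimate gives $\|\varphi^{1}(-s,\theta_{s}u,f(u,s))\|\le KMe^{\alpha s}$, and symmetrically for $\varphi^{2}$), so that the improper integrals converge uniformly in $u$; I would then combine the continuity of the cocycle $\varphi$ with the weak$^{\ast}$-continuity of $u\mapsto f(u)$ on the compact metric space $\mathcal{U}$ and apply dominated convergence along a convergent sequence $u_{n}\to u$ to get the claimed continuity. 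Once condition (ii) is verified, Theorem \ref{Theorem5.1} produces the unique bounded solution $e(u,t)$ with $e(\cdot,0)$ continuous, and Theorem \ref{Theorem_hyperbolic2} identifies the central Selgrade bundle with the line bundle (\ref{Vstar2}), which is exactly (\ref{6.7}), together with $\mathcal{M}_{c}^{1}\subset\mathcal{U}\times\mathbb{P}^{d,1}$. I would remark that the hypothesis of a point $b_{0}$ with $f(b_{0})=0$ in Theorem \ref{Theorem_hyperbolic2} enters its proof only in the final north-pole assertion, which is absent from (iv); the formula (\ref{Vstar2}) for $\mathcal{V}_{c}^{1}$ and the inclusion $\mathcal{M}_{c}^{1}\subset B\times\mathbb{P}H^{1,1}$ are obtained there independently of $b_{0}$, so no analogue of $b_{0}$ is needed in the control setting.
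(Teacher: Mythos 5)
Your route is the paper's own: the corollary is presented there without a separate proof, precisely as an application of Theorem \ref{Theorem_affine1} (for (i)--(iii)) and Theorem \ref{Theorem_hyperbolic2} (for (iv)) to the control flow with base $B=\mathcal{U}$, which is compact and chain transitive, and your verifications of (\ref{CPAA3}) for $f(u)(t)=a_{0}+\sum_{i}u_{i}(t)a_{i}$ and of the uniform bound $\left\Vert f(u)\right\Vert _{\infty}\leq M$ are correct. Your closing remark on Theorem \ref{Theorem_hyperbolic2} is moreover a genuinely valuable point of precision that the paper glosses over: for a general affine control system there need be no $u\in\mathcal{U}$ with $f(u)=0$ (this fails whenever $-a_{0}$ is not of the form $\sum_{i}w_{i}a_{i}$ with $w\in\Omega$ on a set of full measure), so the hypothesis of Theorem \ref{Theorem_hyperbolic2} is not literally satisfied; observing that $b_{0}$ enters its proof only in the final north-pole assertion, which is absent from part (iv), is exactly what is needed to make the application legitimate.

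The one step that fails as written is the continuity verification by dominated convergence. If $u_{n}\rightarrow u$ in $\mathcal{U}$, i.e. weak$^{\ast}$ in $L^{\infty}$, then $f(u_{n})\rightarrow f(u)$ only weak$^{\ast}$, and the integrands $\varphi^{1}(-s,\theta_{s}u_{n},f(u_{n},s))$ need not converge for almost every $s$: take $u_{n}$ rapidly oscillating (a scaled $\mathrm{sign}(\sin(ns))$, say), so that $f(u_{n},s)$ has no pointwise limit while $u_{n}\rightarrow\mathrm{const}$ weak$^{\ast}$. Hence the dominated convergence theorem is not applicable, even on compact time intervals. Your tail estimate $\left\Vert \varphi^{1}(-s,\theta_{s}u,f(u,s))\right\Vert \leq KMe^{\alpha s}$ for $s\leq0$ is the right first half; the second half must exploit the affine dependence on the control rather than pointwise convergence. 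The standard repair: from (\ref{CPAA4}) and the shift property (\ref{CPAA3}) one gets the identity $\int_{-T}^{0}\varphi(-s,\theta_{s}u,f(u,s))\,ds=\psi(T,\theta_{-T}u,0)$, and applying the continuous hyperbolic projection onto $\mathcal{V}^{1}$ along $\mathcal{V}^{2}$ (which commutes with the cocycle by invariance of the subbundles) shows that each truncated stable integral is continuous in $u$ by continuity of the control flow $\Psi$ (\cite[Chapter 4]{ColK00}); the analogous time-reversed identity handles the $\varphi^{2}$ integral. The exponential tail bound makes the convergence as $T\rightarrow\infty$ uniform in $u$, so the limit maps are continuous as uniform limits of continuous maps. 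Alternatively you may simply invoke \cite{ColSan11}, where this verification for affine control flows is carried out and on which the paper implicitly relies. With that single step repaired, your proposal is complete and coincides with the paper's argument.
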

	
	We can give a more explicit description of the central Selgrade bundle
	$\mathcal{V}_{c}^{1}$ for split affine control systems of the form
	(\ref{qaffine1}). Here we suppose that $\Omega_{1}$ and $\Omega_{2}$ are
	convex and compact neighborhoods of the origin. Hence the associated control
	flow $\Psi_{t}(u,v,x),t\in\mathbb{R}$, on $\mathcal{U}_{1}\times
	\mathcal{U}_{2}\times\mathbb{R}^{d}$ is a well defined split affine flow with
	compact metric spaces $B_{1}:=\mathcal{U}_{1},B_{2}:=\mathcal{U}_{2}$ and
	equilibrium $e^{1}:=0_{\mathcal{U}_{1}}\in\mathcal{U}_{1}$, and
	\begin{equation}
		\Psi_{t}(u,v,x)=(u(t+\cdot),v(t+\cdot),\psi(t,x,u,v))\in\mathcal{U}_{1}%
		\times\mathcal{U}_{2}\times\mathbb{R}^{d}. \label{split_control}%
	\end{equation}
	The homogeneous part is given by the bilinear control system
	\begin{equation}
		\dot{x}(t)=A(v(t))x(t),\,v\in\mathcal{U}_{2}\mbox{,} \label{bilinear0}%
	\end{equation}
	which does not depend on $u\in\mathcal{U}_{1}$.
	
	The following corollary is an immediate consequence of Theorem
	\ref{Theorem_split}.
	
	\begin{corollary}
		\label{Corollary_split}Consider the split affine control flow $\Psi$ given by
		(\ref{split_control}) associated with a control system of the form
		(\ref{qaffine1}). Then the central Selgrade bundle $\mathcal{V}_{c}^{1}$ of
		the lift $\Psi^{1}$ to $\mathcal{U}_{1}\times\mathcal{U}_{2}\times
		\mathbb{R}^{d}\times\mathbb{R}$ satisfies
		\[
		\mathcal{V}_{c}^{1}\cap\left(  \{0_{\mathcal{U}_{1}}\}\times\mathcal{U}%
		_{2}\times\mathbb{R}^{d+1}\right)  =\{0_{\mathcal{U}_{1}}\}\times\left(
		\mathcal{P}\oplus\bigoplus\nolimits_{i}\mathcal{V}_{i}^{\infty}\right)  .
		\]
		Here $\mathcal{P}:=\mathcal{U}_{2}\times\left(  \{0\}\times\mathbb{R}\right)
		\subset\mathcal{U}_{2}\times\mathbb{R}^{d+1}$ is the polar bundle,
		$\mathcal{V}_{i}\subset\mathcal{U}_{2}\times\mathbb{R}^{d},i\in\{1,\ldots
		,\ell\}$, are the Selgrade bundles of the homogeneous part (\ref{bilinear0}),
		and the sum is taken over all indices $i$ such that $h^{1}(\mathcal{V}%
		_{i})=\mathbb{P}(\mathcal{V}_{i}\times\{1\})\subset\mathcal{U}_{2}%
		\times\mathbb{P}^{d}$ is chain transitive.
	\end{corollary}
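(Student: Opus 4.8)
The plan is to recognize the split affine control flow (\ref{split_control}) as a concrete instance of the abstract split affine flow of Section \ref{Section4} and then invoke Theorem \ref{Theorem_split} verbatim. First I would set up the dictionary between the two settings: take $B_{1}:=\mathcal{U}_{1}$ and $B_{2}:=\mathcal{U}_{2}$ as the base spaces, $H:=\mathbb{R}^{d}$ as the fiber (so that $H^{1}=\mathbb{R}^{d+1}$), and let $\theta^{1},\theta^{2}$ be the shift flows $\theta_{t}^{1}u=u(t+\cdot)$ and $\theta_{t}^{2}v=v(t+\cdot)$. The equilibrium of $\theta^{1}$ is the constant control $e^{1}:=0\in\mathcal{U}_{1}$, which is admissible since $0\in\Omega_{1}$. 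By the hypotheses on $\Omega_{1},\Omega_{2}$ and the facts recalled in Subsection \ref{Subsection2.5}, the spaces $\mathcal{U}_{1}$ and $\mathcal{U}_{2}$ are compact and chain transitive, whence the product base flow on $\mathcal{U}_{1}\times\mathcal{U}_{2}$ is chain transitive, as required.

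Next I would identify the inhomogeneous data. Writing the variation-of-constants representation for $\dot{x}=A(v(t))x+Bu(t)$ in terms of the cocycle $\varphi$ of the homogeneous bilinear system, one obtains $\psi(t,x,u,v)=\varphi(t,v,x)+\int_{0}^{t}\varphi(t-s,\theta_{s}^{2}v,Bu(s))\,ds$, so that $f:\mathcal{U}_{1}\rightarrow L^{\infty}(\mathbb{R},\mathbb{R}^{d})$ is given by $f(u)(s)=Bu(s)$. The two defining conditions of a split affine flow then hold: the equilibrium condition $f(e^{1})=f(0)=0$ is immediate, and the shift covariance $f(u,t+s)=Bu(t+s)=f(\theta_{s}^{1}u,t)$ follows at once. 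Moreover the homogeneous part $\Phi$ is precisely the bilinear control flow (\ref{bilinear0}) on $\mathcal{U}_{2}\times\mathbb{R}^{d}$, which does not depend on $u\in\mathcal{U}_{1}$, matching the structure required of a split affine flow.

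With these identifications in place, Theorem \ref{Theorem_split} applies directly and yields $\mathcal{V}_{c}^{1}\cap(\{0\}\times\mathcal{U}_{2}\times\mathbb{R}^{d+1})=\{0\}\times(\mathcal{P}\oplus\bigoplus_{i}\mathcal{V}_{i}^{\infty})$, where $\mathcal{P}=\mathcal{U}_{2}\times(\{0\}\times\mathbb{R})$ is the polar bundle and the sum runs over those indices $i$ for which $h^{1}(\mathcal{V}_{i})=\mathbb{P}(\mathcal{V}_{i}\times\{1\})$ is chain transitive in $\mathcal{U}_{2}\times\mathbb{P}^{d}$. There is essentially no substantive obstacle: the whole content is the verification that the control flow meets the abstract axioms. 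The only point requiring a little care is the derivation of the explicit form $f(u)(s)=Bu(s)$ from variation of constants together with the check that $f(0)=0$ and the shift covariance hold, since it is exactly this \emph{separation}---the homogeneous part absorbing all of the $v$-dependence while $u$ enters only additively---that makes the flow split and hence places it under the hypotheses of Theorem \ref{Theorem_split}.
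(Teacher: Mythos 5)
Your proposal is correct and follows exactly the paper's route: the paper also obtains this corollary as an immediate consequence of Theorem \ref{Theorem_split}, after identifying $B_{1}:=\mathcal{U}_{1}$, $B_{2}:=\mathcal{U}_{2}$, the shift flows, the equilibrium $e^{1}:=0\in\mathcal{U}_{1}$, and the homogeneous part (\ref{bilinear0}) in the preceding discussion. Your explicit verification that $f(u)(s)=Bu(s)$ satisfies $f(0)=0$ and the shift covariance (\ref{CPAA3}) is a useful spelling-out of details the paper leaves implicit, but it is the same argument.
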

	
	\begin{remark}
		\label{Remark_linear} A particular case of (\ref{qaffine1}) are linear control
		systems, which have the form
		\begin{equation}
			\dot{x}(t)=Ax(t)+Bu(t),u\in\mathcal{U}, \label{control_linear}%
		\end{equation}
		with $A\in\mathbb{R}^{d\times d}$ and $B\in\mathbb{R}^{d\times m}$. Here
		$\mathcal{U}_{2}$ is trivial and omitted. The homogeneous part has a very
		simple structure, since it is determined by the autonomous differential
		equation $\dot{x}=Ax$. The corresponding Selgrade bundles are $\mathcal{V}%
		_{i}=\mathcal{U}\times L(\lambda_{i})$ with the Lyapunov spaces $L(\lambda
		_{i})$ of $A$. The polar subspace is $\mathcal{P}=\{0\}\times\mathbb{R}%
		\subset\mathbb{R}^{d+1}$ and the central Selgrade bundle satisfies
		\[
		\mathcal{V}_{c}^{1}\cap\left(  \{0_{\mathcal{U}}\}\times\mathbb{R}%
		^{d+1}\right)  =\{0_{\mathcal{U}}\}\times\bigoplus\nolimits_{i}\left(
		L(\lambda_{i})\times\mathbb{R}\right)  ,
		\]
		where the sum is taken over all indices $i$ such that $\mathbb{P}%
		(L(\lambda_{i})\times\{1\})$ is chain transitive.
	\end{remark}
	
	Next we exploit the relation between chain recurrent components of control
	flows and chain control sets. System (\ref{control_affine}) can be embedded
	into a bilinear control system in $\mathbb{R}^{d+1}$ of the form (cf. Elliott
	\cite[Subsection 3.8.1]{Elliott})%
	
	\begin{equation}
		\left(
		\begin{array}
			[c]{c}%
			\dot{x}(t)\\
			\dot{z}(t)
		\end{array}
		\right)  =\left(
		\begin{array}
			[c]{cc}%
			A_{0} & a_{0}\\
			0 & 0
		\end{array}
		\right)  \left(
		\begin{array}
			[c]{c}%
			x(t)\\
			z(t)
		\end{array}
		\right)  +\sum_{i=1}^{m}u_{i}(t)\left(
		\begin{array}
			[c]{cc}%
			A_{i} & a_{i}\\
			0 & 0
		\end{array}
		\right)  \left(
		\begin{array}
			[c]{c}%
			x(t)\\
			z(t)
		\end{array}
		\right)  ,u\in\mathcal{U},\label{hom_d+1}%
	\end{equation}
	with trajectories denoted by $\psi^{1}(t,x_{0},z_{0},u),t\in\mathbb{R}$. This
	control system induces a control system on projective space $\mathbb{P}^{d}$
	(cf., e.g., Colonius and Kliemann \cite[Chapter 6]{ColK00}) with trajectories
	$\mathbb{P}\psi^{1}(t,\mathbb{P}(x_{0},z_{0}),u),t\in\mathbb{R}$, for
	$(x_{0},z_{0})\not =(0,0)$. The linear control flow generated by
	(\ref{hom_d+1}) is the lift $\Psi^{1}$ of the control flow $\Psi$ for
	(\ref{control_affine}) and the control flow of the induced control system on
	$\mathbb{P}^{d}$ is the projective flow $\mathbb{P}\Psi^{1}$.
	
	Projective space $\mathbb{P}^{d}$ can be written as the disjoint union
	$\mathbb{P}^{d}=\mathbb{P}^{d,1}\dot{\cup}\mathbb{P}^{d,0}$, where
	$\mathbb{P}^{d,1}:=\left\{  \mathbb{P}(x,1)\left\vert x\in\mathbb{R}%
	^{d}\right.  \right\}  $ and $\mathbb{P}^{d,0}:=\left\{  \mathbb{P}%
	(x,0)\left\vert 0\not =x\in\mathbb{R}^{d}\right.  \right\}  $. Note that
	$\mathbb{P}^{d,1}$ can be identified with the northern hemisphere
	$\mathbb{S}^{d,+}$ of the unit sphere $\mathbb{S}^{d}$ and $\mathbb{P}^{d,0}$
	corresponds to the equator of $\mathbb{S}^{d}$.
	
	The following theorem clarifies the relation between chain control sets $E$ in
	$\mathbb{R}^{d}$ and the chain control set $E_{c}^{1}$ in projective
	Poincar\'{e} space $\mathbb{P}^{d}$.
	
	\begin{theorem}
		\label{Theorem_E}Consider an affine control system of the form
		(\ref{control_affine}), where the control range $\Omega$ is a convex and
		compact neighborhood of $0\in\mathbb{R}^{m}$.
		
		(i) Then there is a unique chain control set $E_{c}^{1}$ of the induced
		control system on the projective Poincar\'{e} space $\mathbb{P}^{d}$ such that
		$E_{c}^{1}\cap\mathbb{P}^{d,1}\not =\varnothing$. It is given by $E_{c}%
		^{1}=\{\mathbb{P}(x,r)\in\mathbb{P}^{d}\left\vert \exists u\in\mathcal{U}%
		:(u,\mathbb{P}(x,r))\in\mathcal{M}_{c}^{1}\right.  \}$, where $\mathcal{M}%
		_{c}^{1}$ is the projection of the central Selgrade bundle $\mathcal{V}%
		_{c}^{1}$.
		
		(ii) If there is a chain control set $E$ in $\mathbb{R}^{d}$ of the affine
		control system (\ref{control_affine}), the image $\mathbb{P}\left(
		E\times\{1\}\right)  $ in the projective Poincar\'{e} space $\mathbb{P}^{d}$
		is contained in $E_{c}^{1}$.
		
		(iii) If (\ref{control_affine}) is uniformly hyperbolic, then there is a
		unique chain control set $E$ in $\mathbb{R}^{d}$. It is compact and the chain
		control set $E_{c}^{1}$ given by the image of $E$, i.e., $E_{c}^{1}$
		$=\left\{  \mathbb{P}\left(  x,1\right)  \left\vert x\in E\right.  \right\}
		$, is a compact subset of $\mathbb{P}^{d,1}$. For every $u\in\mathcal{U}$
		there exists a unique element $x\in E$ with $\psi(t,x,u)\in E$ for all
		$t\in\mathbb{R}$.
	\end{theorem}
	
	\begin{proof}
		(i) The correspondence (\ref{chain_transitive1}) between maximal invariant
		chain transitive sets of the control flow and chain control sets implies that
		there is a chain control set $E_{c}^{1}$ in $\mathbb{P}^{d}$ with
		\[
		\mathcal{M}_{c}^{1}=\{\left(  u,\mathbb{P}\left(  x,r\right)  \right)
		\left\vert \mathbb{P}\psi^{1}(t,\mathbb{P}\left(  x,r\right)  ,u)\in E_{c}%
		^{1}\mbox{ for }t\in\mathbb{R}\right.  \}.
		\]
		Since $\mathcal{M}_{c}^{1}$ is the only chain recurrent component of
		$\mathbb{P}\Psi^{1}$ having a nonvoid intersection with $\mathcal{U}%
		\times\mathbb{P}^{d,1}$, it follows that $E_{c}^{1}$ is the unique chain
		control set with $E_{c}^{1}\cap\mathbb{P}^{d,1}\not =\varnothing$.
		
		(ii) Let $E\subset\mathbb{R}^{d}$ be a chain control set of
		(\ref{control_affine}). An application of Corollary \ref{Corollary3.8}(i)
		shows that the maximal chain transitive set $\mathcal{E}$ of the affine
		control flow $\Psi$ associated with $E$ satisfies $h^{1}(\mathcal{E}%
		)\subset\mathcal{M}_{c}^{1}$. By (i) it follows that $\mathbb{P}\left(
		E\times\{1\}\right)  \subset E_{c}^{1}$.
		
		(iii) The assertions follow by Theorem \ref{Theorem_hyperbolic2}: The chain
		recurrent set $h_{aff}(\mathcal{R})$ of $\Psi$ is compact and chain transitive
		and is mapped onto the chain transitive set $\mathcal{M}_{c}^{1}$. Thus
		$h_{aff}(\mathcal{R})$ corresponds to the unique chain control set $E$ of
		control system (\ref{control_affine}), and $\mathcal{M}_{c}^{1}$ corresponds
		to the chain control set $E_{c}^{1}$ of the control system on $\mathbb{P}^{d}%
		$. Since $\mathcal{M}_{c}^{1}$ is a compact subset of $\mathcal{U}%
		\times\mathbb{P}^{d,1}$ it follows that $E_{c}^{1}$ is a compact subset of
		$\mathbb{P}^{d,1}$. The last assertion follows, since $\mathcal{V}_{c}^{1}$ is
		one dimensional.
	\end{proof}
	
	We briefly indicate how for linear control systems of the form
	(\ref{control_linear}) stronger results can be obtained under additional
	assumptions. Suppose that the matrices $A,B$ satisfy $\mathrm{rank}%
	[B,AB,\ldots,A^{d-1}B]=d$. Define a control set $D$ as a maximal nonvoid set
	in $\mathbb{R}^{d}$ such that (i) for all $x\in D$ there is a control
	$u\in\mathcal{U}$ with $\psi(t,x,u)\in D$ for all $t\geq0$ and (ii) for all
	$x,y\in D$ and all $\varepsilon>0$ there are $u\in\mathcal{U}$ and $T>0$ with
	$\left\Vert \psi(T,x,u)-y\right\Vert <\varepsilon$. Then one can deduce from
	Sontag \cite[Corollary 3.6.7]{Son98} that there is a unique control set $D$
	with nonvoid interior and
	\[
	L(0)\subset D\subset L(0)+F,
	\]
	where $F$ is a compact and convex subset of $\mathbb{R}^{d}$. The map
	$e_{S}:\mathbb{R}^{d}\rightarrow$ $\mathbb{S}^{d,+},x\mapsto\frac
	{(x,1)}{\left\Vert (x,1)\right\Vert }$ to the northern hemisphere of the
	Poincar\'{e} sphere is a homeomorphism. By Colonius, Santana, and Setti
	\cite[Theorem 15(ii)]{ColSS23b} the induced control system on $\mathbb{S}%
	^{d,+}$ has a unique control set with nonvoid interior, which is given by
	$e_{S}(D)$, and its intersection with the equator $\mathbb{S}^{d,0}$
	satisfies
	\begin{equation}
		\overline{e_{S}(D)}\cap\mathbb{S}^{d,0}=\overline{e_{S}(L(0))}\cap
		\mathbb{S}^{d,0}. \label{intersection}%
	\end{equation}
	For the projective Poincar\'{e} space $\mathbb{P}^{d}$ it similarly follows
	that $\mathbb{P}\left(  D\times\{1\}\right)  $ is a control set with nonvoid
	interior in $\mathbb{P}^{d,1}$ and its closure in $\mathbb{P}^{d}$ satisfies
	\[
	\overline{\mathbb{P}\left(  D\times\{1\}\right)  }\cap\mathbb{P}%
	^{d,0}=\overline{\mathbb{P}(L(0)\times\{1\})}\cap\mathbb{P}^{d,0}.
	\]
	Since $\mathbb{P}\left(  D\times\{1\}\right)  $ is a control set with nonvoid
	interior, Kawan \cite[Proposition 1.24(ii)]{Kawa13} implies that it is
	contained in a chain control set, hence in $E_{c}^{1}$. The intersection in
	(\ref{intersection}) is nontrivial if and only if $L(0)$ is nontrivial, i.e.,
	if $A$ is nonhyperbolic.
	
	We proceed to discuss several simple examples of linear control systems.
	Recall that they generate split affine control flows.
	
	\begin{example}
		\label{Example_hyp1}Consider the linear control system
		\begin{equation}
			\left(
			\begin{array}
				[c]{c}%
				\dot{x}\\
				\dot{y}%
			\end{array}
			\right)  =\left(
			\begin{array}
				[c]{cc}%
				1 & 0\\
				0 & -1
			\end{array}
			\right)  \left(
			\begin{array}
				[c]{c}%
				x\\
				y
			\end{array}
			\right)  +\left(
			\begin{array}
				[c]{c}%
				1\\
				1
			\end{array}
			\right)  u(t)\mbox{ with }u(t)\in\Omega=[-1,1]. \label{Ex6.2b}%
		\end{equation}
		The system is hyperbolic and the Lyapunov spaces of the linear part are
		$L(-1)=\{0\}\times\mathbb{R}$ and $L(1)=\mathbb{R}\times\{0\}$. The subbundles
		$\mathcal{V}_{1}=\mathcal{U}\times L(-1)$ and $\mathcal{V}_{2}=\mathcal{U}%
		\times L(1)$ yield the Selgrade bundles at infinity $\mathcal{V}_{1}^{\infty
		}=\mathcal{V}_{1}\times\{0\}$ and $\mathcal{V}_{2}^{\infty}=\mathcal{V}%
		_{2}\times\{0\}$ with associated chain recurrent components in the projective
		Poincar\'{e} bundle $\mathcal{U}\times\mathbb{P}^{2,0}\subset\mathcal{U}%
		\times\mathbb{P}^{2}$ given by%
		\[
		\mathcal{M}_{1}^{1}=\mathbb{P}\mathcal{V}_{1}^{\infty}=\mathcal{U}%
		\times\left\{  \mathbb{P}(0,1,0)\right\}  ,\mathcal{M}_{2}^{1}=\mathbb{P}%
		\mathcal{V}_{2}^{\infty}=\mathcal{U}\times\mathbb{P}(1,0,0),
		\]
		respectively. Inspection of the phase portrait in $\mathbb{R}^{2}$ shows that
		the unique chain control set is the compact set $E=\left[  -1,1\right]
		\times\lbrack-1,1]$ and for every $u\in\mathcal{U}$ the unique bounded
		solution $e(u,\cdot)$ is contained in $E$. As indicated in
		(\ref{chain_transitive1}) the lift of $E$ to $\mathcal{U}\times\mathbb{R}^{2}$
		is a maximal chain transitive set $\mathcal{E}$. Corollary
		\ref{Corollary_affine1}(iv) implies that the central Selgrade bundle has the
		form (\ref{6.7}) with $\dim\mathcal{V}_{c}^{1}=1$ and projection
		$\mathcal{M}_{c}^{1}=\mathbb{P}\mathcal{V}_{c}^{1}\subset\mathcal{U}%
		\times\mathbb{P}^{2,1}$. Furthermore, the chain control set satisfies
		$E_{c}^{1}=\mathbb{P}\left(  E\times\{1\}\right)  $.
	\end{example}
	
	The following linear control system is nonhyperbolic.
	
	\begin{example}
		\label{Example_nonhyperbolic}Consider
		\[
		\left(
		\begin{array}
			[c]{c}%
			\dot{x}\\
			\dot{y}%
		\end{array}
		\right)  =\left(
		\begin{array}
			[c]{cc}%
			0 & 0\\
			0 & 1
		\end{array}
		\right)  \left(
		\begin{array}
			[c]{c}%
			x\\
			y
		\end{array}
		\right)  +\left(
		\begin{array}
			[c]{c}%
			1\\
			1
		\end{array}
		\right)  u(t)\mbox{ with }u(t)\in\Omega=[-1,1].
		\]
		Here the Lyapunov spaces of the linear part are $L(0)=\mathbb{R}\times\{0\}$
		and $L(1)=\{0\}\times\mathbb{R}$. With $\mathcal{V}_{1}=\mathcal{U}\times
		L(0)$ and $\mathcal{V}_{2}=\mathcal{U}\times L(1)$ this yields the subbundles
		at infinity $\mathcal{V}_{1}^{\infty}=\mathcal{V}_{1}\times\{0\}$ and
		$\mathcal{V}_{2}^{\infty}=\mathcal{V}_{2}\times\{0\}$ with associated chain
		recurrent components in $\mathcal{U}\times\mathbb{P}^{2,0}$ given by
		\[
		\mathcal{M}_{1}^{1}=\mathbb{P}\mathcal{V}_{1}^{\infty}=\mathcal{U}%
		\times\left\{  \mathbb{P}(1,0,0)\right\}  ,\mathcal{M}_{2}^{1}=\mathbb{P}%
		\mathcal{V}_{2}^{\infty}=\mathcal{U}\times\mathbb{P}(0,1,0),
		\]
		respectively. By Corollary \ref{Corollary_split} and Remark
		\ref{Remark_linear} the central Selgrade bundle $\mathcal{V}_{c}^{1}$ has
		dimension $\dim\mathcal{V}_{c}^{1}=1+\dim L(0)=2$. Thus $\mathcal{V}%
		_{1}^{\infty}\subset\mathcal{V}_{c}^{1}$ and $\mathcal{V}_{2}^{\infty}$ is a
		Selgrade bundle at infinity. Inspection of the phase portrait in
		$\mathbb{R}^{2}$ shows that the unique chain control set $E$ is given by the
		strip $E=\mathbb{R}\times\lbrack-1,1]$. The lift of the chain control set $E$
		is the maximal chain transitive set $\mathcal{E}$ for the affine control flow
		$\Psi$. The orthogonal projection of the system on the northern hemisphere
		$\mathbb{S}^{2,+}$ to the unit disk yields the phase portrait for $u=\pm1$ and
		the chain control set $E_{c}^{1}=\mathbb{P}(E\times\{1\})$ sketched in
		\textbf{Fig.} \ref{figu1}. The Morse spectra satisfy%
		\[
		\Sigma_{Mo}(\mathcal{V}_{1}^{\infty};\Psi^{1})=\Sigma_{Mo}(L(0))=\{0\},\Sigma
		_{Mo}(\mathcal{V}_{2}^{\infty};\Psi^{1})=\Sigma_{Mo}(L(1))=\{1\},
		\]
		and the inclusion in Theorem \ref{Theorem_spectrum}(ii) implies $0\in
		\Sigma_{Mo}(\mathcal{V}_{c}^{1};\Psi^{1})$.
	\end{example}
	
	In the next example the eigenvalue $0$ of the matrix $A$ is not semisimple.
	
	\begin{example}
		\label{Example_6.11}Consider
		\[
		\left(
		\begin{array}
			[c]{c}%
			\dot{x}\\
			\dot{y}%
		\end{array}
		\right)  =\left(
		\begin{array}
			[c]{cc}%
			0 & 1\\
			0 & 0
		\end{array}
		\right)  \left(
		\begin{array}
			[c]{c}%
			x\\
			y
		\end{array}
		\right)  +\left(
		\begin{array}
			[c]{c}%
			1\\
			0
		\end{array}
		\right)  u(t)\mbox{ with }u(t)\in\Omega=[-1,1].
		\]
		The Lyapunov space is $L(0)=\mathbb{R}^{2}$ which is chain transitive for
		$u\equiv0$: The $x$-axis consists of equilibria and for $y\not =0$ all
		trajectories move on parallels to the $x$-axis (to the right for $y>0$ and to
		the left for $y<0$). Thus the chain control set $E$ coincides with
		$\mathbb{R}^{2}$. Note that the $(\varepsilon,T)$-chains become unbounded for
		$T\rightarrow\infty$. On the equator $\mathbb{S}^{1}\times\{0\}$ of
		$\mathbb{S}^{2}$ there are two equilibria given by the intersection with the
		eigenspace $\mathbb{R}\times\{0\}$. For $\Psi^{1}$ there is no Selgrade bundle
		at infinity and the central Selgrade bundle is $\mathcal{V}_{c}^{1}%
		=\mathcal{U}\times\mathbb{R}^{2}\times\mathbb{R}$. This yields the chain
		recurrent component $\mathcal{M}_{c}^{1}=\mathbb{P}\mathcal{V}_{c}%
		^{1}=\mathcal{U}\times\mathbb{P}^{2}$, and the chain control set on the
		projective Poincar\'{e} space is $E_{c}^{1}=\mathbb{P}^{2}$.
	\end{example}
	
	For a linear flow $\Phi$ Remark \ref{Remark_linear2} characterizes the central
	Selgrade bundle using the subbundles $\mathcal{V}_{i}$ such that
	$h^{1}(\mathcal{V}_{i})$ is chain transitive. The following example of a
	bilinear control system shows that there may exist several subbundles
	$\mathcal{V}_{i}$ with this property; cf. \cite[Example 5.2]{Col23} which is
	based on \cite[Example 5.5.12]{ColK00}.
	
	\begin{example}
		\label{Example6.8}Consider the bilinear control system given by
		\[
		\left(
		\begin{array}
			[c]{c}%
			\dot{x}\\
			\dot{y}%
		\end{array}
		\right)  =\left[  \left(
		\begin{array}
			[c]{cc}%
			0 & -\frac{1}{4}\\
			-\frac{1}{4} & 0
		\end{array}
		\right)  +u_{1}\left(
		\begin{array}
			[c]{cc}%
			1 & 0\\
			0 & 1
		\end{array}
		\right)  +u_{2}\left(
		\begin{array}
			[c]{cc}%
			0 & 1\\
			0 & 0
		\end{array}
		\right)  +u_{3}\left(
		\begin{array}
			[c]{cc}%
			0 & 0\\
			1 & 0
		\end{array}
		\right)  \right]  \left(
		\begin{array}
			[c]{c}%
			x\\
			y
		\end{array}
		\right)
		\]
		with
		\[
		u(t)=(u_{1}(t),u_{2}(t),u_{3}(t))\in\Omega=\left[  -1,1\right]  \times\left[
		-1/4,1/4\right]  \times\left[  -1/4,1/4\right]  \subset\mathbb{R}^{3}\mbox{.}
		\]
		This defines a linear flow $\Phi_{t}(u,x)=\left(  u(t+\cdot),\varphi
		(t,x,u)\right)  ,t\in\mathbb{R}$, on the vector bundle $\mathcal{U}%
		\times\mathbb{R}^{2}$. With
		\[
		A_{1}=\left\{  \left(
		\begin{array}
			[c]{c}%
			x\\
			\alpha x
		\end{array}
		\right)  \left\vert \alpha\in\left[  -\sqrt{2},-\frac{1}{\sqrt{2}}\right]
		\right.  \right\}  ,\,A_{2}=\left\{  \left(
		\begin{array}
			[c]{c}%
			x\\
			\alpha x
		\end{array}
		\right)  \left\vert \alpha\in\left[  \frac{1}{\sqrt{2}},\sqrt{2}\right]
		\right.  \right\}  ,
		\]
		the Selgrade bundles are, for $i=1,2$,
		\[
		\mathcal{V}_{i}=\left\{  (u,x,y)\in\mathcal{U}\times\mathbb{R}^{2}\left\vert
		\varphi(t,x,y,u)\in A_{i}\mbox{ for }t\in\mathbb{R}\right.  \right\}  \mbox{
			with }\dim\mathcal{V}_{i}=1.
		\]
		One obtains the two chain recurrent components $\mathcal{M}_{i}=\mathbb{P}%
		\mathcal{V}_{i}$ of $\mathbb{P}\Phi$ on the projective bundle $\mathcal{U}%
		\times\mathbb{P}^{1}$ ordered by $\mathcal{M}_{1}\preceq\mathcal{M}_{2}$. The
		Morse spectra are
		\[
		\mathbf{\Sigma}_{Mo}(\mathcal{V}_{1})=\left[  -2,1/2\right]  \mbox{ and
		}\mathbf{\Sigma}_{Mo}(\mathcal{V}_{2})=\left[  -1/2,2\right]  .
		\]
		Since, for $i=1,2$, one has $0\in\mathrm{int}\Sigma_{Mo}(\mathcal{V}_{i})$,
		Theorem \ref{Corollary4.4} implies that $h^{1}(\mathcal{V}_{i})$ is chain
		transitive in the projective Poincar\'{e} bundle $\mathcal{U}\times
		\mathbb{P}^{2}$. By Remark \ref{Remark_linear2} the central Selgrade bundle
		is
		\[
		\mathcal{V}_{c}^{1}=\mathcal{P}\oplus\mathcal{V}_{1}^{\infty}\oplus
		\mathcal{V}_{2}^{\infty}=\mathcal{U\times}\mathbb{R}^{3},
		\]
		where $\mathcal{P=U}\times\{0\}\times\mathbb{R}\subset\mathcal{U}%
		\times\mathbb{R}^{2}\times\mathbb{R}$ is the polar bundle. There is no
		Selgrade bundle at infinity.
	\end{example}
	
	\textbf{Acknowledgement.} We appreciate the careful reading and the
	constructive comments of an anonymous reviewer which helped to improve the paper.
	
	\section{Declarations}
	
	\subsection{Funding}
	
	No funding received
	
	\subsection{Conflict of interest/Competing interests}
	
	No interests of a financial or personal nature
	
	\subsection{Ethics approval}
	
	Not applicable
	
	\subsection{Consent to participate}
	
	According to \newline
	https://www.springer.com/gp/editorial-policies/informed-consent this item it
	is not applicable
	
	\subsection{Consent for publication}
	
	According to \newline
	https://www.springer.com/gp/editorial-policies/informed-consent this item it
	is not applicable
	
	\subsection{Availability of data and materials}
	
	Not applicable
	
	\subsection{Code availability}
	
	Not applicable
	
	\subsection{Authors' contributions}
	
	All authors contributed to all sections. All authors reviewed the final manuscript.

	\begin{figure}[h]
		\centering
		\includegraphics[width=10cm]{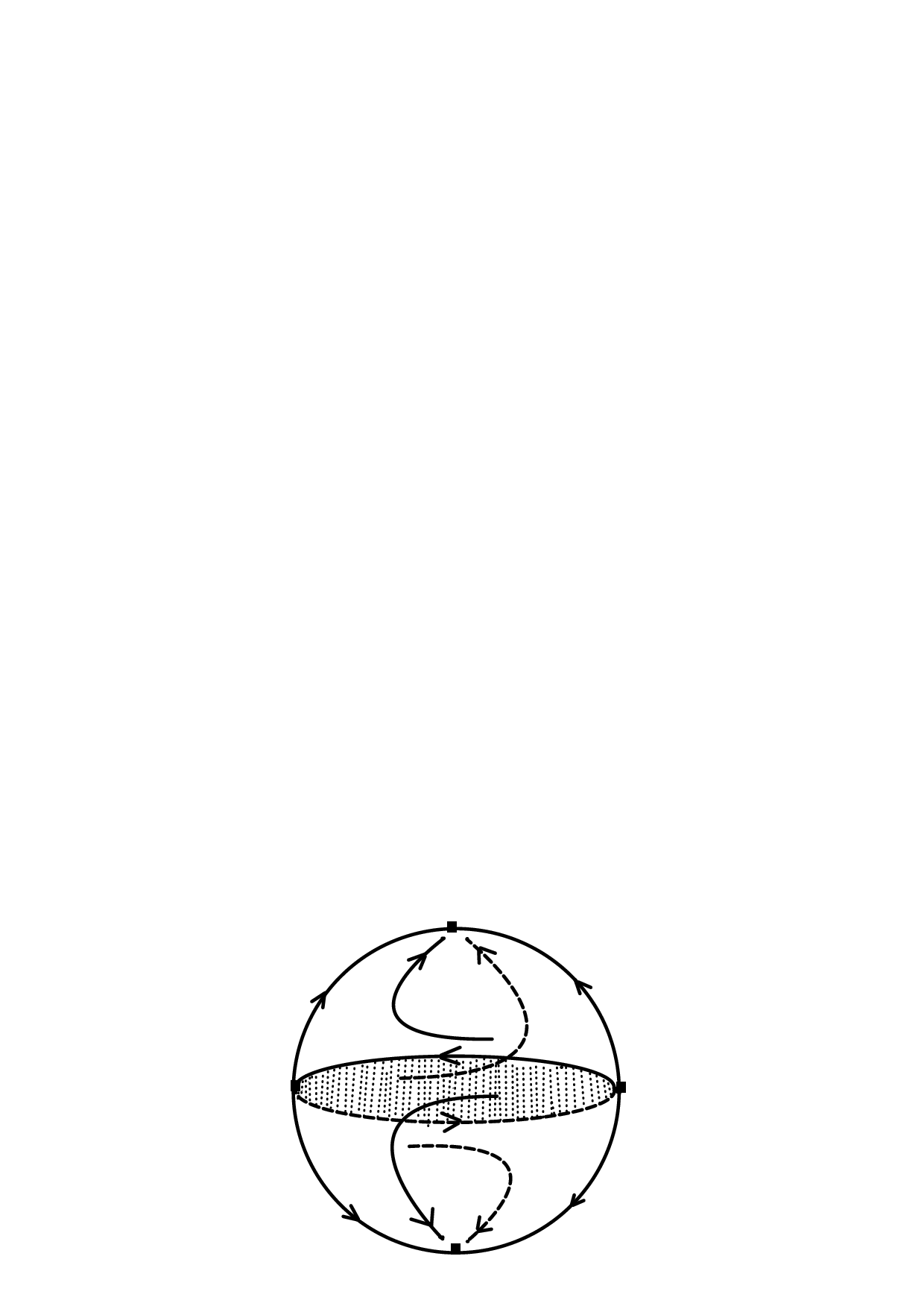}
		\caption{Chain control set $E_{c}^{1}$ and phase portraits for $u=\pm1$ in Example \ref{Example_nonhyperbolic}.} \label{figu1}
	\end{figure}

\end{document}